\numberwithin{equation}{section}
\theoremstyle{plain}
\newtheorem{theorem}[equation]{Theorem}
\newtheorem{proposition}[equation]{Proposition}
\newtheorem{lemma}[equation]{Lemma}
\newtheorem{corollary}[equation]{Corollary}
\newtheorem{assumption}[equation]{Assumption}
\theoremstyle{remark}
\newtheorem{remark}[equation]{Remark}
\theoremstyle{definition}
\newtheorem{definition}[equation]{Definition}
\newtheorem{example}[equation]{Example}
\newcommand{\lra}{\longrightarrow}
\newcommand{\ra}{\to}
\newcommand{\restr}{\mbox{\Large \(|\)\normalsize}}
\newcommand{\D}{{\mathcal D}}
\newcommand{\G}{{\mathcal G}}
\newcommand{\h}{{\mathcal H}}
\newcommand{\N}{\mathbb N}
\newcommand{\R}{\mathbb R}
\newcommand{\U}{{\mathcal U}}
\newcommand{\Z}{\mathbb Z}
\newcommand{\slice}{\operatorname{Slice}}
\newcommand{\child}{\operatorname{Ch}}
\newcommand{\const}{\operatorname{const}}
\newcommand{\desc}{\operatorname{Desc}}
\newcommand{\dist}{\operatorname{dist}}
\newcommand{\id}{\operatorname{id}}
\newcommand{\length}{\operatorname{length}}
\newcommand{\nerve}{\operatorname{Nerve}}
\newcommand{\on}{\:\mbox{\rule{0.1ex}{1.2ex}\rule{1.9ex}{0.1ex}}\:}
\newcommand{\diam}{\operatorname{diam}}
\newcommand{\pslice}{\operatorname{PSlice}}
\newcommand{\side}{\operatorname{Side}}
\newcommand{\Star}{\operatorname{St}}
\newcommand{\tstar}{\operatorname{TSt}}
\newcommand{\tStar}{\operatorname{TSt}}
\def\D{\partial}
\newcommand{\al}{\alpha}
\def\de{\delta}
\def\De{\Delta}
\def\eps{\epsilon}
\def\ga{\gamma}
\def\Ga{\Gamma}
\def\lra{\longrightarrow}
\def\om{\omega}
\def\ra{\to}
\def\si{\sigma}
\def\Si{\Sigma}
\def\th{\theta}
\def\ulim{\mathop{\hbox{$\om$-lim}}}
\newcommand{\ol}{\overline}
\begin{document}

\begin{abstract}
We give sufficient conditions for a metric space to bilipschitz embed
in $L_1$.  In particular, if $X$ is a length space and there
is a Lipschitz map $u:X\ra \R$ such that for every interval $I\subset \R$,
the connected components of $u^{-1}(I)$ have diameter $\leq \const\cdot \diam(I)$,
then $X$ admits a bilipschitz embedding in $L_1$.
As a corollary, the Laakso examples \cite{laakso}  bilipschitz embed in $L_1$, 
though they do not embed in any 
any Banach space with the Radon-Nikodym property (e.g. the space $\ell_1$
of summable sequences).

The spaces appearing the statement of the bilipschitz embedding theorem
have an alternate characterization as inverse limits of systems of metric graphs
satisfying certain additional conditions. This representation, which may be of
independent interest, is the initial part of the proof
of the bilipschitz embedding theorem.  The rest of the proof  uses the combinatorial
structure of the inverse system of graphs and a diffusion 
construction, to produce the embedding in $L_1$.

\end{abstract}

\title[Inverse limits and bilipschitz embeddings in $L_1$]{Realization of metric
spaces as inverse limits, and 
bilipschitz embedding in $L_1$}
 
\date{\today}
\author{Jeff Cheeger and Bruce Kleiner}
\thanks{J.C. supported by NSF grants DMS-0704404 and DMS-1005552.
B.K. supported by NSF grants DMS-1007508 and DMS-1105656.}

\maketitle

{\small\tableofcontents}

\section{Introduction}
\label{intro}

\subsection*{Overview}
This paper is part of a series
 \cite{crannouncement,GFDA,ckbv,ckdppi,ckmetmon,CKN09,ckmetdiff} 
which examines the relations between differentiability properties
and bilipschitz embeddability in Banach spaces.
We give a new criterion for metric spaces to bilipschitz embed in
$L_1$.    This applies to  several known families of spaces,  
illustrating the sharpness of earlier nonembedding theorems.
In the first part of the proof, we characterize a certain class of metric spaces 
as inverse limits; this may be of independent interest.   

\subsection*{Metric spaces sitting over $\R$}
We begin with a special case of 
our main embedding theorem.
\begin{theorem}
\label{thm-length_space_embedding}
Let $X$ be a length space.   Suppose  $u:X\ra \R$ is a Lipschitz map, and  there 
is a $C\in (0,\infty)$ such that for every interval $I\subset \R$, each connected component
of $u^{-1}(I)$ has diameter at most $C\cdot \diam(I)$.  Then $X$ admits a bilipschitz
embedding $f:X\ra L_1(Z,\mu)$, for some measure space $(Z,\mu)$.  
\end{theorem}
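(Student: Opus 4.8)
The plan is to build the embedding by producing a large family of Lipschitz functions $X \to \R$ whose "derivatives" (in the length-space sense) add up, in an $L_1$ average over a parameter space, to a definite fraction of the metric. The model for such a construction is the standard fact that $\R$ itself embeds isometrically in $L_1$ via the maps $x \mapsto \chi_{[t,\infty)}$, integrated over $t$; here one wants a "fibered" version of this, where over each point $t \in \R$ one has a cut of $X$ separating the part of $u^{-1}((-\infty,t))$-side from the $u^{-1}((t,\infty))$-side, but the cut is chosen at a range of scales and with enough randomness that \emph{horizontal} distance (distance between points with the same or nearly equal $u$-value) is also detected.

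\textbf{Step 1: Reduce to an inverse-limit presentation.} As advertised in the abstract and introduction, the first move is to show that a length space $X$ with a map $u$ as in the hypotheses can be written as an inverse limit $X = \varprojlim X_i$ of metric graphs $X_i$, where $X_i$ carries a map $u_i : X_i \to \R$ compatible with $u$, the graph $X_i$ is built at scale $2^{-i}$ (edges of length comparable to $2^{-i}$, "over" dyadic subintervals of $\R$ of that length), and the bonding maps $X_{i+1}\to X_i$ are Lipschitz, surjective, and do not collapse too much. The diameter bound on components of $u^{-1}(I)$ is exactly what makes the fibers of these maps controlled, so that the inverse limit is bilipschitz to $X$ rather than merely a continuous image. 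This is a geometric-measure-theoretic construction and I expect it to be the technical heart of the first half of the paper.

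\textbf{Step 2: A diffusion/random-cut construction on the inverse system.} Working with the combinatorial system $\{X_i\}$, I would, for each scale $i$ and each edge (or vertex) of $X_i$, introduce a random choice that determines a "cut" — a subset of $X$ whose boundary, measured via the pull-back of the path metric, separates nearby points with probability proportional to their distance. Concretely one builds a measure space $(Z,\mu)$ indexed by (scale, combinatorial data, random parameter), and a map $f : X \to L_1(Z,\mu)$ where $f(x)$ records, coordinate by coordinate, on which side of each cut the point $x$ lies. The vertical direction (differences in $u$) is handled by the $\R$-factor essentially for free; the point of the diffusion construction is to also capture horizontal separation: because $X$ is a length space, any two points are joined by a near-geodesic, and one arranges that at the scale comparable to $\dist(x,y)$ a definite proportion of the random cuts separate $x$ from $y$, while at all scales the total "cut mass" crossed along a path is bounded by a constant times its length (this is where one uses that the graphs $X_i$ have bounded geometry over $\R$ and the bonding maps are controlled).

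\textbf{Step 3: Verify the two bilipschitz inequalities.} The upper bound $\|f(x)-f(y)\|_{L_1} \le C' d(x,y)$ follows by estimating, for a near-geodesic $\gamma$ from $x$ to $y$, the $\mu$-measure of the set of cuts whose boundary $\gamma$ crosses, and summing a geometric series in the scale $2^{-i}$; the component-diameter hypothesis and the bounded combinatorics keep each scale's contribution under control. The lower bound $\|f(x)-f(y)\|_{L_1} \ge c\, d(x,y)$ is the substantive one: given $x,y$, pick the scale $i_0$ with $2^{-i_0}\approx d(x,y)$ and show that a definite fraction of the random cuts at scale $i_0$ separate $x$ from $y$ — here one uses that in the graph $X_{i_0}$ the images of $x$ and $y$ are a bounded number of edges apart (again by the diameter hypothesis) but not in the same "cell," so a positive-probability random cut lands between them. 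Assembling these gives the bilipschitz embedding into $L_1(Z,\mu)$.

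The main obstacle, I expect, is Step 1 together with the horizontal part of the lower bound in Step 3: one must set up the inverse system and the random cuts so that \emph{simultaneously} the cut mass is summable across scales (for the upper bound) and yet a fixed fraction of cuts at the critical scale genuinely separates a given pair of points (for the lower bound), uniformly over all pairs and all locations in $X$. Getting a single construction to do both requires the combinatorial control over the $X_i$ and their bonding maps to be quantitatively just right, and that is precisely why the inverse-limit description — rather than working with $X$ directly — is the natural framework for the proof.
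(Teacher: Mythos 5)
Your outline follows essentially the same route as the paper: a reduction of $X$ (via the length-space hypothesis and the component-diameter bound) to the inverse limit of an admissible system of metric graphs fibered over $\R$, followed by an inductive diffusion construction of cut measures whose associated cut metrics converge to a metric comparable to $d_X$, with the upper bound coming from mass normalization across scales and the lower bound from showing a definite fraction of cuts at the critical scale separate a given pair. This matches the paper's Theorems \ref{thm-realization} and \ref{thm-bilipschitzembedding} and the estimates of Section \ref{sec-estimates}, so the approach is the intended one.
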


We illustrate Theorem \ref{thm-length_space_embedding}
with two simple examples:

\begin{example}[Lang-Plaut \cite{LP01}, cf. Laakso \cite{laakso}]
\label{ex-laaksodiamond}
We construct a sequence of graphs $\{X_i\}_{i\geq 0}$ where $X_i$ has
a path metric so that every edge has length $4^{-i}$.
Let $X_0$ be the unit interval $[0,1]$.  
For $i>0$, inductively construct a 
$X_i$ from $X_{i-1}$ by replacing
each  edge of $X_{i-1}$ with a copy of the  graph $\Ga$ in 
Figure \ref{fig-laaksodiamond}, rescaled by the factor $m^{-(i-1)}$.
The graphs $X_1$, $X_2$, and $X_3$ are shown.   
The sequence $\{X_i\}$ naturally forms an inverse system,
$$
X_0\stackrel{\pi_0}{\longleftarrow}\cdots \stackrel{\pi_{i}}{\longleftarrow}X_{i}
\stackrel{\pi_{i+1}}{\longleftarrow}
\cdots\, ,
$$
where the projection map
$\pi_{i-1}:X_i\ra X_{i-1}$ collapses  the copies of $\Ga$ to intervals.
The inverse limit $X_\infty$ has a metric $d_\infty$ given by 
\begin{equation}
\label{eqn-d_infty_def}
d_\infty(x,x')=\lim_{i\ra\infty}\;d_{X_i}(\pi_i^\infty(x),\pi_i^\infty(x'))\,,
\end{equation}
where $\pi_i^\infty:X_\infty\ra X_i$ denotes the canonical projection.   (Note that the
sequence of metric spaces $\{X_i\}_{i\geq 0}$ 
Gromov-Hausdorff converges to $(X_\infty,d_\infty)$.)
It is not
hard to verify that $\pi_0^\infty:(X_\infty,d_\infty)\ra [0,1]$ satisfies the 
hypotheses of Theorem \ref{thm-length_space_embedding}.
\end{example}

\begin{figure}[htb] 

\begin{center}  
\includegraphics[scale=.8]{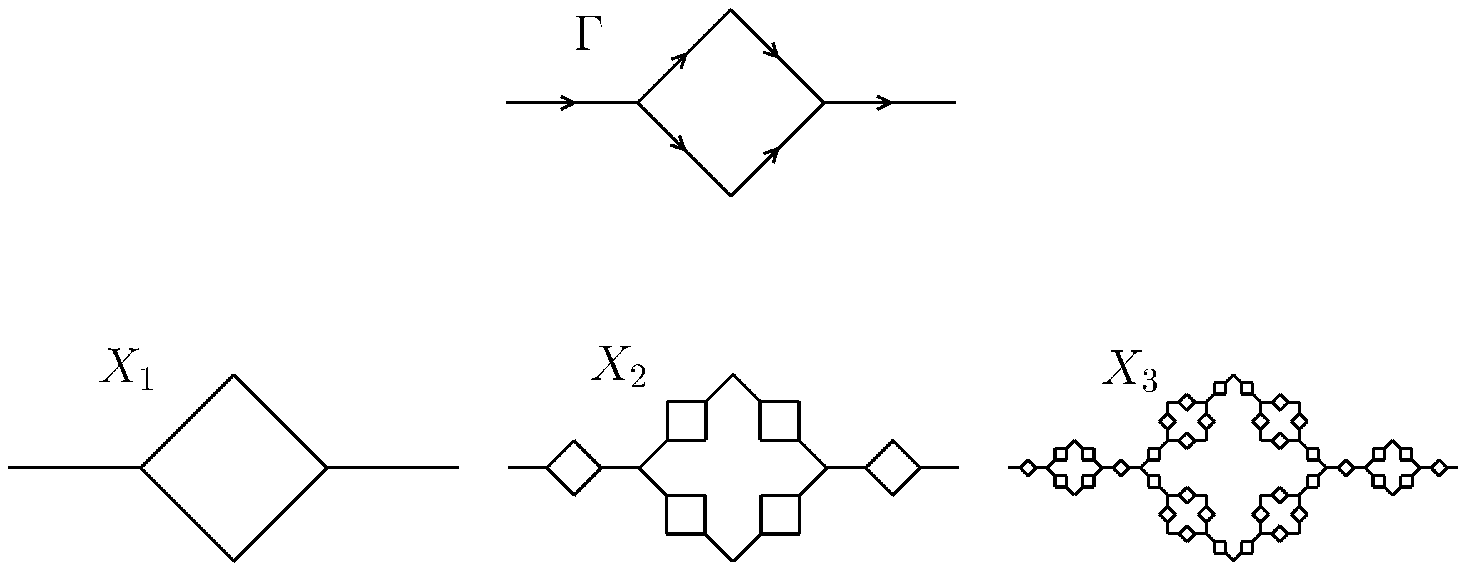} 
\caption{\label{fig-laaksodiamond}}
\end{center}

\end{figure}

\begin{example}
\label{ex-laaksoahlforsregularpi}
Construct an inverse system
$$
X_0\stackrel{\pi_0}{\longleftarrow}\cdots \stackrel{\pi_{i}}{\longleftarrow}X_{i}
\stackrel{\pi_{i+1}}{\longleftarrow}
\cdots
$$
inductively as follows.
Let $X_0=[0,1]$.  For $i>0$,   inductively 
define $X_{i-1}'$ to be the result of trisecting all edges in  
$X_{i-1}$, and let $N\subset X_{i-1}'$ be new vertices added in trisection.  Now form
$X_i$ by taking two
copies of $X_{i-1}'$ and gluing them together along $N$.   More formally,   
$$
X_i=(X_{i-1}'\times \{0,1\})/\sim\,,
$$
where $(v,0)\sim (v,1)$ for all $v\in N$.
The map  $\pi_{i-1}:X_i\ra X_{i-1}$ is induced by 
the collapsing
map $X_{i-1}'\times \{0,1\}\ni (x,j)\mapsto x\in X_{i-1}$.  Metrizing
the inverse limit $X_\infty$ as in Example \ref{ex-laaksodiamond}, the
canonical projection $X_\infty\ra X_0\simeq [0,1]\ra \R$ satisfies the assumptions
of Theorem \ref{thm-length_space_embedding}.
\end{example}

The inverse limit $X_\infty$ in Example \ref{ex-laaksoahlforsregularpi} is actually
bilipschitz homeomorphic to one
of the Ahlfors regular Laakso spaces from \cite{laakso}, see Section \ref{sec-laaksoexamples}.
Thus Theorem \ref{thm-length_space_embedding} 
implies that this Laakso space
bilipschitz embeds in $L_1$ (this special case was announced
in \cite{crannouncement}).
Laakso showed that  $X_\infty$ carries  a doubling measure 
which satisfies a Poincar\'e inequality, and using this, the nonembedding result
of \cite{ckdppi}  implies that $X_\infty$ does not bilipschitz
embed in any Banach space which satisfies the Radon-Nikodym property.  Therefore we
have:
\begin{corollary}
\label{cor-inL1notinl1}
There is a compact  Ahlfors regular (in particular doubling) 
metric measure space satisfying a Poincar\'e inequality, which bilipschitz
embeds in $L_1$, but not in any Banach space with the Radon-Nikodym property (such as
$\ell_1$).
\end{corollary}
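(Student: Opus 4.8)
The plan is to assemble the corollary from ingredients that are, at this point, entirely in place: the explicit inverse limit $X_\infty$ of Example \ref{ex-laaksoahlforsregularpi}, the embedding criterion of Theorem \ref{thm-length_space_embedding}, Laakso's geometric analysis of this space, and the nonembedding theorem of \cite{ckdppi}.

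First I would fix $X_\infty$ to be the inverse limit from Example \ref{ex-laaksoahlforsregularpi}, metrized by $d_\infty$ as in (\ref{eqn-d_infty_def}). Since each $X_i$ is a finite metric graph carrying a path metric, each projection $\pi_{i-1}\colon X_i\to X_{i-1}$ is a $1$-Lipschitz surjection, and these maps are local isometries away from the collapsed arcs, the limit $(X_\infty,d_\infty)$ is a compact length space: $d_\infty$ is realized by rectifiable paths arising as limits of paths in the $X_i$. I would then verify the two hypotheses of Theorem \ref{thm-length_space_embedding} for $u\colon X_\infty\to X_0=[0,1]\hookrightarrow\R$ equal to the canonical projection $\pi_0^\infty$. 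That $u$ is $1$-Lipschitz is immediate, as it is a composition of $1$-Lipschitz maps. For the component estimate, I would argue by induction on scale: a connected component of $u^{-1}(I)$, for an interval $I$ of length comparable to $3^{-i}$, is contained in the lift to $X_\infty$ of a uniformly bounded-diameter subgraph of $X_i$, and the self-similar gluing along the trisection vertices $N$ propagates this bound through all finer scales with a single constant $C$. (This is the assertion flagged as ``not hard to verify'' in Example \ref{ex-laaksoahlforsregularpi}.) Granting both hypotheses, Theorem \ref{thm-length_space_embedding} furnishes a bilipschitz embedding $f\colon X_\infty\to L_1(Z,\mu)$.

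Next I would invoke the identification carried out in Section \ref{sec-laaksoexamples}: $X_\infty$ is bilipschitz homeomorphic to one of the Ahlfors regular Laakso spaces of \cite{laakso}. This yields at once that $X_\infty$ is compact, Ahlfors regular (hence doubling), and --- by Laakso's theorem --- carries a doubling measure satisfying a Poincar\'e inequality. Finally I would apply the nonembedding theorem of \cite{ckdppi}, according to which no metric measure space admitting a doubling measure with a Poincar\'e inequality bilipschitz embeds into a Banach space with the Radon--Nikodym property; since each of these hypotheses and the conclusion is a bilipschitz invariant, and since $\ell_1$ has the Radon--Nikodym property, $X_\infty$ (together with $f$) is the desired example. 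The only steps requiring real care are the component-diameter estimate for $\pi_0^\infty$ and the bilipschitz identification with the Laakso space; both amount to bookkeeping with the self-similar combinatorics of the construction, and I anticipate no essential obstacle once the inductive scheme is set up.
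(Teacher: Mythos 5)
Your route is the same as the paper's: take $X_\infty$ from Example \ref{ex-laaksoahlforsregularpi}, apply Theorem \ref{thm-length_space_embedding} to the projection to $X_0=[0,1]$ to get the $L_1$ embedding, use the identification with a Laakso space (Section \ref{sec-laaksoexamples}) together with Laakso's theorem for compactness, Ahlfors regularity and the Poincar\'e inequality, and finish with the nonembedding result of \cite{ckdppi}. The verification of the component-diameter hypothesis that you sketch by induction is handled in the paper by the comparison of $d_\infty$ with $\bar d_\infty$ in Section \ref{sec-inverse_systems_path_metrics}; that is a difference of bookkeeping, not of substance.

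There is, however, one genuine error in the last step. You state the theorem of \cite{ckdppi} as: \emph{no} metric measure space with a doubling measure satisfying a Poincar\'e inequality bilipschitz embeds into a Banach space with the Radon--Nikodym property. That statement is false: $\R^n$ with Lebesgue measure is doubling and satisfies a Poincar\'e inequality, and it embeds linearly (hence bilipschitzly) into $\ell_1$, which has the RNP. What \cite{ckdppi} provides is a differentiation theorem for RNP-valued Lipschitz maps on PI spaces; to derive nonembedding one needs, in addition to the PI hypothesis, a structural fact about the particular space that is incompatible with the a.e.\ ``generalized linear'' behavior of blow-ups of the putative embedding --- for the Laakso space, that its Cheeger charts are one-dimensional while its Gromov--Hausdorff tangent cones at a.e.\ point are not bilipschitz homeomorphic to $\R$ (the paper records exactly this point later in the introduction). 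The gap is repairable, since the Laakso space does have this property, but as written your final deduction rests on a theorem that is not true.
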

\noindent
To our knowledge, this is the first example of a doubling space which bilipschitz
embeds in $L_1$ but not
in $\ell_1$.

We can extend Theorem \ref{thm-length_space_embedding}
by dropping the length space condition, and replacing connected components
with a metrically based variant. 

\begin{definition}
Let $Z$ be a metric space and $\de\in (0,\infty)$.  A {\bf $\de$-path} (or {\bf $\de$-chain})
in $Z$ is a finite sequence of points $z_0,\ldots,z_k\subset Z$ such that 
$d(z_{i-1},z_i)\leq \de$
for all $i\in \{1,\ldots,k\}$.  The property of belonging to a $\de$-path defines an 
equivalence relation
on $Z$, whose cosets are the {\bf $\de$-components} of $Z$.  
\end{definition}

Our main embedding result is:

\begin{theorem}
\label{thm-main}
Let $X$ be a metric space.  Suppose there is a  $1$-Lipschitz map
$u:X\ra \R$ and a constant $C\in (0,\infty)$
such that for every interval $I\subset \R$, the $\diam(I)$-components of 
$u^{-1}(I)$ have diameter at most $C\cdot \diam(I)$.
Then $X$ admits a bilipschitz
embedding $f:X\ra L_1$.  
\end{theorem}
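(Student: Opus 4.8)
The plan is to realize $X$ as (a bilipschitz copy of a subset of) an inverse limit of metric graphs $\{Y_i\}_{i\ge 0}$ "sitting over $\R$," and then build the $L_1$-embedding from the combinatorics of that inverse system by a diffusion/averaging construction.

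\textbf{Step 1: Reduction to the length-space / inverse-limit picture.} The hypothesis is a purely metric condition on $\diam(I)$-components of $u^{-1}(I)$, so the first task is to fabricate a combinatorial model that captures both the metric of $X$ and the map $u$. For each scale $\delta_i = 2^{-i}$ (say), I would cover $\R$ by a system of intervals of length comparable to $\delta_i$ with bounded overlap, pull these back under $u$, and take the $\delta_i$-components of the preimages as the vertices of a graph $Y_i$; two vertices are joined by an edge when the corresponding components meet (or meet after slightly enlarging the intervals). The map collapsing a $\delta_{i+1}$-component to the $\delta_i$-component containing it gives the bonding maps $\pi_i : Y_{i+1}\to Y_i$, and the composite $Y_i \to \R$ records the position over $\R$. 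One equips $Y_i$ with a path metric in which an edge coming from an interval $I$ has length $\asymp \diam(I)$. The diameter hypothesis on $\diam(I)$-components is exactly what forces: (i) the fibers of $Y_i \to \R$ have uniformly bounded diameter at scale $\delta_i$, and (ii) the natural map $X \to Y_i$ is $C'$-Lipschitz and "$C'$-co-Lipschitz up to scale $\delta_i$," so that $X$ bilipschitz embeds in the inverse limit $Y_\infty := \varprojlim Y_i$ with its limiting metric. (This is the "alternate characterization as inverse limits" advertised in the abstract; I would isolate it as a standalone proposition, since it is where the hypothesis of Theorem~\ref{thm-main} is consumed.)

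\textbf{Step 2: A cut-measure / diffusion construction on the inverse system.} Given the inverse system $\{Y_i\}$ over $\R$ with uniformly controlled geometry, the goal is a map $f : Y_\infty \to L_1(Z,\mu)$ that is bilipschitz. The model is the embedding of $\R$ itself into $L_1$ via $t \mapsto \mathbf 1_{(-\infty,t]}$, whose "cut" at $s\in\R$ separates $\{t \le s\}$ from $\{t > s\}$. On $Y_\infty$ one wants a family of cuts that (a) at each location $s \in \R$ and each scale $\delta_i$ separates points whose $Y_i$-images lie on opposite sides of $s$, and (b) additionally, within a single fiber over $s$ at scale $\delta_i$, separates the (boundedly many) distinct $\delta_{i+1}$-preimages from one another, so that "horizontal" distance inside fibers is also detected. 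Concretely, to each edge $e$ of each $Y_i$ I would assign a random cut of $Y_\infty$ obtained by "splitting" the inverse system at $e$: choose, uniformly, a threshold inside the interval $I_e$ for the $\R$-direction, and independently a random 2-coloring of the children of $e$ in $Y_{i+1}$ (and propagate downward), producing a measurable family of subsets $E \subset Y_\infty$. The embedding is $f(x) = (\text{indicator functions of these cuts})$, reweighted by $\diam(I_e)$ and by $\delta_i$ so that the total measure of cuts separating $x$ from $x'$ is $\asymp d_\infty(x,x')$. The upper bound ($f$ is Lipschitz) is a geometric-series estimate: a cut at scale $\delta_i$ separates $x,x'$ only if $d_\infty(x,x') \gtrsim \delta_i$ or the $Y_i$-images are $\delta_i$-close to a coloring interface, and summing the weights over all scales converges. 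The lower bound (co-Lipschitz) is the crux: given $x \ne x'$ with $d_\infty(x,x') \asymp \delta_i$, one exhibits a definite $\mu$-mass of cuts at scale $\asymp \delta_i$ separating them — either a "vertical" cut if $u$ already separates them by $\gtrsim \delta_i$, or, if $u(x)\approx u(x')$, a "horizontal" cut coming from a coloring of the children of the common edge at the appropriate level, which exists with positive probability precisely because $x,x'$ project to distinct children.

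\textbf{Step 3: Assembly.} Combining Step 1 (bilipschitz $X \hookrightarrow Y_\infty$) with Step 2 (bilipschitz $Y_\infty \hookrightarrow L_1(Z,\mu)$, which after the standard identification $L_1(Z,\mu)$ embeds isometrically in $L_1 = L_1[0,1]$ when $(Z,\mu)$ is a nice measure space — or one simply works with $L_1(Z,\mu)$ and notes any separable $L_1$ embeds isometrically into $L_1$) yields the desired $f : X \to L_1$. I expect the \textbf{main obstacle} to be Step 2's lower bound, and within it the bookkeeping that makes "horizontal" separation inside fibers work uniformly: one must choose the random colorings of the children so that the induced cuts of the \emph{inverse limit} $Y_\infty$ are genuine monotone/BV-type cuts whose boundaries have controlled size, and so that the coloring is consistent across scales (a child colored "left" at level $i$ must have its own children partitioned compatibly at level $i+1$). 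Getting the weights on the three types of fluctuation — vertical position in $\R$, choice of fiber component, and the residual metric distortion between $d_\infty$ and the graph metrics — to balance into a single two-sided estimate is the technical heart; the length-space case (Theorem~\ref{thm-length_space_embedding}) is strictly easier here because connectedness removes the need for the $\delta$-chain machinery in Step 1 and tightens the fiber structure.
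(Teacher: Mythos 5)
Your overall architecture coincides with the paper's: your Step 1 is Theorem \ref{thm-realization} (graphs whose vertices are $m^{-i}$-components of preimages of stars of vertices of nested subdivisions of $\R$, assembled into an admissible inverse system with a bilipschitz map of $X$ into the inverse limit), and your Step 2 is Theorem \ref{thm-bilipschitzembedding}, proved by a multi-scale cut-measure construction. The gap is in the mechanism you propose for Step 2. Your cuts of ``horizontal'' type --- a $2$-coloring of the fiber components (children) over an edge $e$ of $Y_i$, propagated downward --- are not monotone subsets of the inverse limit: their indicator functions have sharp interfaces both at $\partial u^{-1}(I_e)$ and wherever two differently colored children are adjacent inside $u^{-1}(I_e)$, and nothing in your construction diffuses these interfaces. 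Consequently a pair $x,x'$ at distance $\delta_j$ that straddles such an interface is charged weight $\asymp\delta_i$ by the scale-$\delta_i$ horizontal cuts for every coarser scale $i\le j$; even after randomizing the positions of the intervals and thresholds, the expected charge per coarse scale is $\asymp\delta_j$, and the sum over the infinitely many scales $i\le j$ diverges. So the Lipschitz upper bound --- which you describe as ``a geometric-series estimate'' --- fails for the cut family as you have described it; it is not only the lower bound that is delicate.

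The paper's construction avoids this by using only monotone cuts, i.e.\ cuts of the form $S_\preceq$ where the boundary $S$ is a slice meeting every monotone (directed) geodesic exactly once, and by making the passage from scale $i$ to scale $i+1$ a diffusion that redistributes the boundary of each slice within trimmed stars of vertices, so that the measure of slices passing through any fixed vertex of $X_i'$ is exactly $m^{-(i+1)}$ (Lemma \ref{lem-normalized}). That balancing is what gives $|d_{\Sigma_{i+1}'}-d_{\Sigma_i'}|\le 4\,m^{-(i+1)}$ (Lemma \ref{lem-distancechange}) and hence geometric convergence of the cut metrics. Separation of two points in the same fiber is then obtained not by coloring fiber components but by showing that a definite fraction of slices at the relevant scale pass strictly between the two points and that this separation persists under all further diffusion (Lemmas \ref{lem-persistenceofsides}--\ref{lem-notinsametrimmedstar}). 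The paper also explains why monotonicity is forced rather than optional: by metric differentiation, any bilipschitz embedding of such spaces into $L_1$ must infinitesimally restrict to geodesic embeddings on monotone geodesics, so the cut measure must concentrate on monotone sets; your non-monotone horizontal cuts are precisely what the construction must exclude. Two smaller points: the inverse system must be indexed by all of $\Z$, not $i\ge 0$, to handle unbounded $X$; and passing from finite-valence subsystems to the general case requires the finite-subset embedding criterion for $L_1$ together with an ultralimit/Kakutani argument, which your Step 3 elides.
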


\bigskip

\subsection*{Inverse systems of directed metric graphs, and multi-scale factorization}
Our approach to proving Theorem \ref{thm-main} is to  first show that  any map
$u:X\ra \R$ satisfying the hypothesis of theorem  
can be factored into an infinite sequence of maps, i.e.
it gives rise to a certain kind of  inverse system where $X$ reappears
(up to bilipschitz equivalence) as the inverse limit.  Strictly speaking this
result has nothing to do with embedding, and can be viewed as a kind of
multi-scale version
of monotone-light factorization (\cite{eilenberg,whyburn})
in the metric space category.

We work with a special class of  inverse systems of graphs:

\begin{definition}[Admissible inverse systems]
\label{def-admissibleinversesystem}
An inverse system indexed by the integers
$$
\cdots\stackrel{\pi_{-i-1}}{\longleftarrow}X_{-i}\stackrel{\pi_{-i}}{\longleftarrow}
\cdots\stackrel{\pi_{-1}}{\longleftarrow}
X_0\stackrel{\pi_0}{\longleftarrow}\cdots \stackrel{\pi_{i}}{\longleftarrow}X_{i}
\stackrel{\pi_{i+1}}{\longleftarrow}
\cdots\, ,
$$
is {\bf admissible} if 
for some integer $m\geq 2$
the following  conditions hold:
\begin{enumerate}
\setlength{\itemsep}{.5ex}
\item $X_i$ is a nonempty directed
graph for every $i\in \Z$. 
\item  For every $i\in \Z$, if $X_i'$ denotes the directed  graph obtained by
subdividing each edge of $X_i$ into
$m$ edges, then $\pi_i$ induces a map
$\pi_i:X_{i+1}\ra X_i'$ which is simplicial, an isomorphism on every edge,  and
direction preserving.
\item For every $i,j\in \Z$, and every $x\in X_i$, $x'\in X_j$, there is a $k\leq \min(i,j)$ 
such that $x$ and $x'$ project to the same connected component
of $X_k$.
\end{enumerate}
\end{definition}
Note that the $X_i$'s
need not be connected or have finite valence, and they may contain isolated vertices.

We endow each $X_i$ with a (generalized) path metric $d_i:X_i\times X_i\ra[0,\infty]$, where each edge is 
linearly isometric
to  the interval $[0,m^{-i}]\subset\R$.   Since we do not require the $X_i$'s to be connected, we have
$d_i(x,x')=\infty$ when $x,x'$ lie in different connected components of $X_i$.
It follows from Definition \ref{def-admissibleinversesystem}
that the projection maps $\pi_i^j:(X_j,d_j)\ra (X_i,d_i)$ are $1$-Lipschitz.

Examples \ref{ex-laaksodiamond} and \ref{ex-laaksoahlforsregularpi} provide
admissible
inverse systems in a straightforward way: for $i<0$
one simply takes $X_i$  to be a copy of $\R$ with
the standard subdivision into intervals of length $m^{-i}$,  and the projection
map $\pi_i:X_{i+1}\ra X_i$ to be the identity map.  Of course this modification does
not affect the inverse limit.

Let  $X_\infty$ be the inverse limit of the system $\{X_i\}$, and let
$\pi_i^j:X_j\ra X_i$, 
$\pi_i^\infty:X_\infty\ra X_i$ denote the canonical projections for $i\leq j\in \Z$.
We will often omit the 
superscripts and subscripts when there is no risk of confusion.

We now equip the inverse limit $X_\infty$ with a metric $\bar d_\infty$; unlike in the earlier
examples, this is not defined as a limit of pseudo-metrics $d_i\circ \pi_i^\infty$.

\begin{definition}
\label{def-dbarinfty}
Let $\bar d_\infty:X_\infty\times X_\infty\ra [0,\infty)$ be the supremal pseudo-distance 
on $X_\infty$ 
such that for every $i\in\Z$ and every vertex $v\in X_i$, if 
$$
\Star(v,X_i) =\cup\{e\mid e\;\text{is an edge of}\; X_i,\;v\in e\}
$$
is the closed star of $v$ in $X_i$, then  the inverse image of  $\Star(v,X_i)$
under the projection
map $X_\infty\ra X_i$  has diameter 
at most $2m^{-i}$.    Henceforth, unless otherwise indicated, distances in 
$X_\infty$ will refer to $\bar d_\infty$.
\end{definition}

\noindent
 In fact $\bar d_\infty$ is a metric, and for any 
distinct points $x,x'\in X_\infty$, the distance $\bar d_\infty(x,x')$ is comparable
to $m^{-i}$, where $i$ is the maximal integer such that $\{\pi_i^\infty(x),\pi_i^\infty(x')\}$
is contained in the star of some vertex $v\in X_i$;
see Section \ref{sec-notation_preliminaries}.  In 
Examples 
\ref{ex-laaksodiamond} and \ref{ex-laaksoahlforsregularpi}, the metric $\bar d_\infty$
is comparable to the metric $d_\infty$ defined using the path metrics in
(\ref{eqn-d_infty_def}); see Section \ref{sec-inverse_systems_path_metrics}.

Admissible inverse systems give rise to spaces satisfying the hypotheses of Theorem
\ref{thm-main}:
\begin{theorem}
\label{thm-easy_direction}
Let $\{X_i\}$ be an admissible inverse system.   Then there is a $1$-Lipschitz map
$\phi:X_\infty\ra\R$ which is canonical up  to post-composition with a translation,
which satisfies the assumptions of Theorem \ref{thm-main}.
\end{theorem}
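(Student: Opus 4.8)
The plan is to reconstruct $\phi$ from the directed structure as a coherent limit of ``height functions'' on the graphs $X_i$, and then to verify the two required properties by playing the edge-combinatorics of the inverse system against the definition of $\bar d_\infty$.

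First I would attach to each connected component $C$ of each $X_i$ a function $\psi_C:C\to\R$ that is affine on every edge and increases by exactly $m^{-i}$ along each directed edge; the admissibility hypotheses are used here to see that such a $\psi_C$ exists and is unique up to an additive constant (this is the one place where the fine structure of the individual $X_i$ enters the construction of $\phi$). Condition (2) — simpliciality, edgewise isomorphism and direction preservation of $\pi_i:X_{i+1}\to X_i'$, combined with the fact that an $m$-fold subdivision replaces the increment $m^{-i}$ by $m$ increments of $m^{-(i+1)}$ — then shows that on each component of $X_{i+1}$ the function $\psi_C\circ\pi_i^{i+1}$ agrees with the appropriate $\psi$ up to a constant. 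Fixing a basepoint $*\in X_\infty$, I would set $\phi(x):=\psi_C(\pi_k^\infty(x))-\psi_C(\pi_k^\infty(*))$ for any $k$ small enough that $\pi_k^\infty(x)$ and $\pi_k^\infty(*)$ lie in one component $C$ of $X_k$; such a $k$ exists by condition (3), and the compatibility just noted makes $\phi$ independent of the choices of $k$ and $C$. Replacing $*$ by another basepoint changes $\phi$ by an additive constant, which is the asserted canonicity up to translation.

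For the $1$-Lipschitz bound I would invoke the extremal characterization of $\bar d_\infty$. The function $\rho(x,x'):=|\phi(x)-\phi(x')|$ is a pseudo-distance on $X_\infty$, so by maximality of $\bar d_\infty$ it suffices to check the defining constraints for $\rho$: for every vertex $v\in X_i$, the $\rho$-diameter of $(\pi_i^\infty)^{-1}(\Star(v,X_i))$ is at most $2m^{-i}$. This is immediate, since points projecting into $\Star(v,X_i)$ lie over the single component $C_i\ni v$, so their $\rho$-distance is a difference of values of $\psi_{C_i}$, which is $1$-Lipschitz for $d_i$, and $\Star(v,X_i)$ has $d_i$-diameter $\le 2m^{-i}$.

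The substantive point — and the step I expect to be the main obstacle — is that $\phi$ satisfies the $\diam(I)$-component condition of Theorem \ref{thm-main} with a constant depending only on $m$. Given $I$ with $\ell=\diam(I)$, I would work at the scale $\mu$ with $m^{-\mu}$ comparable to $\ell$ — concretely the largest $\mu$ with $m^{-\mu}>A\ell$, where $A$ is the constant from the comparison estimate of Section \ref{sec-notation_preliminaries} relating $\bar d_\infty(x,x')$ to the largest level at which $x,x'$ share a vertex star. For an $\ell$-component $S$ of $\phi^{-1}(I)$, the comparison estimate forces any single $\ell$-step within $S$ to project into a common vertex star of $X_\mu$, hence into one component $C_\mu$; chaining, $\pi_\mu^\infty(S)\subset\psi_{C_\mu}^{-1}(J)\cap C_\mu$ for an interval $J$ of length $\ell<m^{-\mu}$. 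The difficulty is that $\pi_\mu^\infty(S)$ need not lie in a single connected piece of $\psi_{C_\mu}^{-1}(J)$: the directed structure permits long thin ``staircases'' in $X_\mu$ along which the height oscillates by less than one edge length. The observation that rescues the estimate is that such staircases collapse one level down — because $\psi_C$ is \emph{monotone} on every edge of $X_{\mu-1}$, any connected subset of $X_{\mu-1}$ on which $\psi_C$ has range $O(m^{-\mu})$, which is much smaller than $m^{-(\mu-1)}$ after iterating the projection a bounded number $K=K(m)$ of times if necessary, is trapped inside a single edge or a single vertex star of $X_{\mu-1}$. Applying this to the connected set obtained from $\pi_\mu^\infty(S)$ by splicing in, at each $\ell$-step, the small connected set in $X_\mu$ supplied by the comparison estimate, one concludes that $\pi_{\mu-K}^\infty(S)$ lies inside a single $\Star(w,X_{\mu-K})$, whence $\diam_{\bar d_\infty}(S)\le 2m^{-(\mu-K)}\lesssim\ell$. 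The real labor here is the bookkeeping that turns the $\bar d_\infty$-comparison estimate into control, at the chosen scale, of where an $\ell$-chain in $X_\infty$ can travel — namely that its $X_\mu$-projection stays in a bounded neighborhood of a connected set on which the height function has range $O(\ell)$.
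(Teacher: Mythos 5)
Your overall strategy coincides with the paper's: build a ``height function'' compatible with the directed structure, get the Lipschitz bound from the extremal characterization of $\bar d_\infty$, and control $\de$-components of $\phi^{-1}(I)$ by splicing chains into connected sets on which the height has small range and which are therefore trapped in a single vertex star. Your parts on the Lipschitz estimate and on the $\diam(I)$-component condition are correct; the latter is a mild variant of the paper's argument (the paper traps the spliced path in a path component of $\phi_i^{-1}(\Star^o(v,Y_i))$ for a suitable subdivision $Y_i$ of $\R$, which amounts to the same ``height range less than one edge length forces containment in one star'' observation you use, after dropping a bounded number of levels).

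The genuine gap is at the very start: you assert that each connected component $C$ of each $X_i$ carries a function $\psi_C$ that is affine on edges and increases by exactly $m^{-i}$ along each directed edge, and you attribute this to ``the admissibility hypotheses'' without argument. This is precisely the nontrivial content of the existence statement. For an arbitrary directed graph such a $\psi_C$ need not exist --- it exists if and only if every cycle in $C$ has zero signed edge-sum, and nothing about $X_i$ \emph{in isolation} rules out directed cycles or unbalanced cycles. What rules them out is the global structure of the inverse system: a cycle $\ga\subset X_i$ with $N$ edges projects, for $j\ll i$, to a closed directed-compatible edge path in $X_j^{(i-j)}$ of $d_j$-length $Nm^{-i}\ll m^{-j}$, which is therefore contained in a single closed star of $X_j$, where a local height function exists and shows the signed sum vanishes. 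The paper packages this by passing to the direct limit $X_{-\infty}$ and showing each $X_{-\infty}^{(k-\infty)}$ is connected with at most one vertex of valence $\neq 2$, hence a line or a star of rays, on which the map to $\R$ is evident and unique up to translation; this also delivers your compatibility and uniqueness claims in one stroke. Without some version of this argument your construction of $\phi$ does not get off the ground, so you should supply it explicitly.
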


The converse is also true:

\begin{theorem}
\label{thm-realization}
Let $X$ be a metric space.
Suppose $u:X\ra \R$ is a $1$-Lipschitz map, and there is a constant $C\in [1,\infty)$ such
that for every interval $I\subset \R$, the  inverse image $u^{-1}(I)\subset X$ has 
$\diam(I)$-components of diameter at most $C\cdot \diam(I)$.   Then for
any $m\geq 2$ there is an
admissible inverse system $\{X_i\}$ and a compatible system of maps $f_i:X\ra X_i$,
such that:
\begin{itemize}
\item The induced map $f_\infty:X\ra (X_\infty,\bar d_\infty)$ is $L'=L'(C,m)$-bilipschitz.
\item $u=\phi\circ f_\infty$,
where $\phi:X_\infty\lra \R$ is the $1$-Lipschitz map of Theorem
\ref{thm-easy_direction}.
\end{itemize}
\end{theorem}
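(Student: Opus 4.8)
The plan is to build the admissible inverse system $\{X_i\}$ directly from the map $u$, by discretizing $X$ at the dyadic (or $m$-adic) scales determined by $u$. Fix $m\geq 2$. For each $i\in\Z$, partition $\R$ into the half-open intervals $J = [km^{-i},(k+1)m^{-i})$, $k\in\Z$; call these the \emph{level-$i$ intervals}. For a level-$i$ interval $J$, consider the preimage $u^{-1}(J)$ and take its $\diam(J)$-components; by hypothesis each such component $W$ has $\diam(W)\leq C\, m^{-i}$. These components will be the \emph{vertices} of a graph at level $i$. To get edges, I would instead work with the \emph{doubled} intervals, or more precisely use two interleaved partitions (by $[km^{-i},(k+1)m^{-i})$ and by the intervals shifted by $\tfrac12 m^{-i}$, or equivalently look at $\diam(I)$-components over length-$2m^{-i}$ intervals $I=[km^{-i},(k+2)m^{-i})$): a level-$i$ vertex $v$ lying over the interval $[km^{-i},(k+1)m^{-i})$ is joined by an edge to a level-$i$ vertex $w$ lying over $[(k+1)m^{-i},(k+2)m^{-i})$ precisely when $v$ and $w$ lie in a common $2m^{-i}$-component of $u^{-1}([km^{-i},(k+2)m^{-i}))$. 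Direct each edge from the lower interval to the higher one; this gives the directed graph $X_i$, and $u$ descends to a map $X_i\to\R$ sending each edge isometrically onto a level-$i$ interval. The projection $\pi_i\colon X_{i+1}\to X_i'$ is the obvious coarsening map on components; one checks it is simplicial, an edge-isomorphism, and direction-preserving because the level-$(i+1)$ partition refines the level-$i$ one by a factor of $m$. The maps $f_i\colon X\to X_i$ send $x$ to the vertex (or edge) carrying its $2m^{-i}$-component over the relevant interval, interpolating linearly using the value $u(x)$ inside an edge; compatibility $\pi_i\circ f_{i+1}=f_i$ and $u=\phi\circ f_\infty$ are then immediate from the construction, with $\phi$ the map of Theorem~\ref{thm-easy_direction}.

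The substantive point is that $f_\infty\colon X\to (X_\infty,\bar d_\infty)$ is bilipschitz with constant depending only on $C$ and $m$. The Lipschitz bound on $f_\infty$ should follow from the definition of $\bar d_\infty$ together with the diameter control: two points $x,x'\in X$ with $d(x,x')\leq m^{-i}$ have $|u(x)-u(x')|\leq m^{-i}$ (as $u$ is $1$-Lipschitz) and lie in a common $m^{-i}$-path of length $O(1)$ in $X$, so their images at level $i$ lie in a bounded-size connected subgraph — in particular in a single star after passing down a bounded number of levels — whence $\bar d_\infty(f_\infty x, f_\infty x')\lesssim m^{-i}$. The reverse inequality is where the hypothesis on $\diam(I)$-components of $u^{-1}(I)$ is essential: if $f_\infty(x)$ and $f_\infty(x')$ lie in a common star of a vertex $v\in X_i$, then by construction $x$ and $x'$ both lie in a single $2m^{-i}$-component of $u^{-1}(I)$ for an interval $I$ of length $\leq 3m^{-i}$ (the union of the intervals meeting $\Star(v,X_i)$), and hence $d(x,x')\leq C'm^{-i}$ with $C'=C'(C)$; since $\bar d_\infty(f_\infty x,f_\infty x')$ is comparable to $m^{-i}$ for the maximal such $i$ (as recorded after Definition~\ref{def-dbarinfty}), this gives $d(x,x')\lesssim_{C,m}\bar d_\infty(f_\infty x,f_\infty x')$.

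Several routine verifications remain: that $X_\infty$ is nonempty and that condition (3) of admissibility holds (any two points of $X$ have images that agree far enough down the system because $u(X)$ is contained in a single $\diam$-component at coarse enough scale — or, if $u(X)$ is unbounded, one extends the system below level $0$ by copies of $\R$ as in the remark following the examples); that the generalized path metrics $d_i$ make the $\pi_i^j$ $1$-Lipschitz, which is automatic from edge-isometry; and that $f_\infty$ actually lands in $X_\infty$, i.e. the $f_i(x)$ are compatible, which is built into the definition. One technical nuisance is that a point $x$ may lie over an endpoint $km^{-i}$ of a level-$i$ interval, so $f_i(x)$ should be defined as the corresponding vertex rather than an interior edge point; handling this consistently across all levels (and checking it does not spoil compatibility) requires a small amount of care but no real idea.

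The main obstacle, and the place to concentrate effort, is the lower bilipschitz bound — specifically, showing that membership in a common star at level $i$ forces the original points to be $O_C(m^{-i})$-close. This is exactly the direction that uses the $\diam(I)$-component hypothesis, and it must be carried out uniformly: one has to argue that the $2m^{-i}$-component structure used to define edges at level $i$ is coarse enough to capture genuine proximity in $X$ yet fine enough that distinct stars separate points at distance $\gg m^{-i}$. The interplay between the \emph{metrically-defined} $\bar d_\infty$ (via stars and the constant $2$ in Definition~\ref{def-dbarinfty}) and the \emph{combinatorially-defined} edge relation is the crux; once the constants are matched up, Theorem~\ref{thm-easy_direction} supplies $\phi$ and the factorization $u=\phi\circ f_\infty$ for free.
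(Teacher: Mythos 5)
Your construction is essentially the one in the paper: vertices of $X_i$ are $\sim m^{-i}$-components of preimages of an $m$-adic decomposition of $\R$, edges record when such components merge at the coarser scale, and the two bilipschitz inequalities are obtained exactly as in the paper from the star characterization of $\bar d_\infty$ (Lemma \ref{lem-starscale}) together with the $\diam(I)$-component hypothesis. Your estimates in the second paragraph are correct: the Lipschitz direction needs only that $m^{-i}$-close points land in a common star (up to a bounded shift of index), and the lower bound follows from the observation that a common star at level $i$ traps both points in a single $3m^{-i}$-component of the preimage of an interval of length $3m^{-i}$.

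The one place your setup creates a genuine (though repairable) problem is the definition of $f_i$ at points $x$ with $u(x)$ in the \emph{interior} of a level-$i$ interval $J$. Such an $x$ determines only one vertex $v$ (its component over $J$), but to ``interpolate linearly'' you must place $f_i(x)$ on a specific edge of $X_i$ emanating from $v$, and $v$ may have several neighbours over the adjacent interval (this already happens for the diamond of Example \ref{ex-laaksodiamond}). An arbitrary choice breaks the compatibility $\pi_i\circ f_{i+1}=f_i$, without which $f_\infty$ is not even defined. (A related bookkeeping issue: if the vertex over $J$ is realized at the midpoint of $J$, the subdivisions of $\R$ underlying $\phi_i$ and $\phi_{i+1}$ only align when $m$ is odd.) The paper's convention resolves both at once: index vertices by pairs $(v,U)$ with $v$ a vertex of the subdivision $Y_i$ and $U$ an $m^{-i}$-component of $u^{-1}(\Star(v,Y_i))$ --- i.e.\ components over the length-$2m^{-i}$ closed stars rather than over the length-$m^{-i}$ intervals --- and join two vertices when the components intersect. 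Then a point $z$ with $u(z)$ interior to an edge $\ol{v_1v_2}$ of $Y_i$ lies in exactly one component over each of $\Star(v_1,Y_i)$ and $\Star(v_2,Y_i)$, these two components intersect (both contain $z$), and $f_i(z)$ is canonically the point over $u(z)$ on the resulting edge. With that adjustment your argument goes through.
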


Theorem \ref{thm-length_space_embedding} is a  corollary
of Theorem \ref{thm-realization} :  if 
$u:X\ra \R$ is as in Theorem \ref{thm-length_space_embedding}, then for any interval
$[a,a+r]\subset \R$, an $r$-component
of $f^{-1}([a,a+r])$ will be contained in a connected component of $f^{-1}([a-r,a+2r])$ (since
$X$ is a length space), and therefore has diameter $\leq 3C\diam(I)$.

\begin{remark}
Theorem \ref{thm-realization} implies that Examples
\ref{ex-laaksodiamond} and \ref{ex-laaksoahlforsregularpi} can be represented up to 
bilipschitz homeomorphism as 
inverse limits of many different admissible inverse systems, since the integer $m$
may be chosen freely.   
\end{remark}

\begin{remark}
Although it is not used elsewhere in the paper, in Section \ref{sec-realization_generalization} we 
prove a result  in the spirit of Theorem \ref{thm-realization} for maps $u:X\ra Y$, where 
$Y$ is a  general metric space equipped with a sequence of coverings.   
\end{remark}

\subsection*{Analogy with light mappings in the topological category}
We would like to point out that Theorems \ref{thm-easy_direction}, \ref{thm-realization} are 
analogous to certain results for topological spaces.   

Recall that a continuous map $f:X\ra Y$ is
 light  (respectively discrete, monotone)
if the point inverses $\{f^{-1}(y)\}_{y\in Y}$ are  totally disconnected (respectively discrete, connected).
If $X$ is a compact metrizable space, then $X$ has topological dimension $\leq n$
if and only if there is a light map $X\ra \R^n$; one implication comes from the
fact  that closed light maps do not decrease topological dimension \cite[Theorem 1.12.4]{engelking},  
and the other follows from a Baire category argument.

One may consider versions of light mappings in the Lipschitz  category.   One possibility is 
the notion appearing the Theorems \ref{thm-main} and \ref{thm-realization}:
\begin{definition}
A Lipschitz map $f:X\ra Y$ between metric spaces is {\bf Lipschitz light} if there is a $C\in (0,\infty)$
such that for every bounded subset $W\subset Y$, the $\diam(W)$-components of $f^{-1}(W)$
have diameter $\leq C\cdot \diam(W)$.
\end{definition}

The analog with the topological case then leads to:
\begin{definition}
A metric space $X$ has {\bf Lipschitz dimension $\leq n$} iff there is a Lipschitz light map
from $X\ra\R^n$ where $\R^n$ has the usual metric.
\end{definition}

With this definition, Theorems \ref{thm-main} and \ref{thm-realization} become results about metric spaces
of Lipschitz dimension $\leq 1$. 

To carry the topological analogy further, we note that if $f:X\ra Y$ is a light map between metric spaces
and $X$ is compact, then \cite{dyckhoff,dranishnikov}, in a variation on monotone-light factorization,
showed that there is an inverse system
$$
Y\longleftarrow X_1\longleftarrow\ldots\longleftarrow X_k\longleftarrow\ldots
$$
and a compatible family of mappings $\{g_k:X\ra X_k\}$ such that:
\begin{itemize}
\item The projections $X_k\leftarrow X_{k+1}$ are discrete.
\item  $g_k$ gives a  factorization of $f$:
$$
Y\longleftarrow X_1\longleftarrow\ldots\longleftarrow X_k\stackrel{g_k}{\longleftarrow} X\,.
$$
\item The point inverses of $g_k$ have diameter $\leq \De_k$, where $\De_k\ra 0$ as $k\ra \infty$.

\item $\{g_k\}$ induces a homeomorphism $g_\infty:X\ra X_\infty$, where $X_\infty$ is 
 the inverse limit $X_\infty$ of the system $\{X_k\}$.
\end{itemize}
Making allowances for the difference between the Lipschitz and topological categories,
this compares well with Theorem \ref{thm-realization}.

\subsection*{Embeddability and nonembeddability of  inverse limits in Banach spaces}
Theorem \ref{thm-realization} reduces the proof of Theorem \ref{thm-main} (and also
Theorem  \ref{thm-length_space_embedding})
to:

\begin{theorem}
\label{thm-bilipschitzembedding}
Let $\{X_i\}_{i\in\Z}$ be an admissible inverse system, and  $m$ be the 
parameter in Definition \ref{def-admissibleinversesystem}.
There is a constant $L=L(m)\in (0,1)$ and a $1$-Lipschitz map
$f:X_\infty\ra L_1$ such that for all $x,y\in X_\infty$, 
$$
\|f(x)-f(y)\|_{L_1}\geq L^{-1}\,\bar d_\infty(x,y)\,.
$$
\end{theorem}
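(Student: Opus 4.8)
The plan is to build the map $f$ as an $L_1$-valued "sum over scales" of cut measures, where each scale $i$ contributes a family of cuts coming from the combinatorial structure of $X_i$, and the diffusion construction alluded to in the abstract is used to smooth these cuts across scales so that the contributions telescope correctly. Concretely, for each $i\in\Z$ I would like to produce a measure on a space of subsets of $X_\infty$ — equivalently, an $L_1$-valued function $f_i:X_\infty\to L_1(Z_i,\mu_i)$ — with the following two properties: (a) $f_i$ is Lipschitz with constant $\lesssim m^{-i}$, so that the sum $f=\sum_i f_i$ converges and is $1$-Lipschitz after rescaling; and (b) if $x,y\in X_\infty$ have $\bar d_\infty(x,y)\sim m^{-i}$ — i.e. $i$ is the maximal index for which $\pi_i^\infty(x),\pi_i^\infty(y)$ lie in a common star — then $\|f_i(x)-f_i(y)\|_{L_1}\gtrsim m^{-i}$, with the constant depending only on $m$. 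Summing a geometric series then gives the lower bound $\|f(x)-f(y)\|_{L_1}\gtrsim \bar d_\infty(x,y)$, which is the assertion.

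The heart of the matter is constructing the scale-$i$ piece. The natural first attempt is to use, for each vertex $v\in X_i$, the "half-space" type function $x\mapsto \dist_{X_i}(\pi_i^\infty(x), X_i\setminus B(v,\tfrac12 m^{-i}))$ or a cut separating the two ends of an edge; the edge-isomorphism and direction-preserving conditions in Definition \ref{def-admissibleinversesystem} guarantee these behave like coordinate functions locally and detect the metric $\bar d_\infty$ at scale $m^{-i}$ by the characterization recalled after Definition \ref{def-dbarinfty}. The difficulty is that a naive choice of cuts at consecutive scales will overlap in an uncontrolled way: a pair $x,y$ separated at scale $i$ will also be separated by many cuts at all finer scales $j>i$, producing an upper bound $\|f(x)-f(y)\|_{L_1}$ that is far too large (it would blow up like $\sum_{j\geq i} m^{-j}\cdot(\text{number of relevant cuts at scale }j)$, and the number of relevant finer cuts is not bounded). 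This is precisely why a diffusion/averaging construction is needed: instead of taking sharp cuts, one diffuses the indicator of a region over a controlled number of scales (a "scale window" of bounded width depending on $m$), so that the total $L_1$-mass any fixed pair accumulates from scales $j\geq i$ is a convergent geometric series in $m^{-j}$ rather than an unbounded sum. I expect the key estimate — that the diffused cut functions at scale $j$ contribute at most $\lesssim m^{-j}$ to the distance between any pair, uniformly — to be the main obstacle, and it will rely on the fact that, because $\pi_i$ subdivides each edge into exactly $m$ edges and is an isomorphism on edges, the combinatorics of "which scale-$j$ cut regions contain a given point" is locally finite with multiplicity controlled by $m$.

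For the upper (Lipschitz) bound I would argue scale by scale: a $1$-Lipschitz path in $X_\infty$ of length $r$ with $m^{-i-1}\le r\le m^{-i}$ crosses only boundedly many scale-$i$ cut regions and, by the diffusion, is essentially unaffected by scales $j\ll i$ and only mildly affected (again geometrically summably) by scales $j\gg i$; adding up gives $\|f(x)-f(y)\|_{L_1}\lesssim r$. For the lower bound I would fix $x,y$ with $\bar d_\infty(x,y)\sim m^{-i}$, pass to $X_i$ where $\pi_i^\infty(x)$ and $\pi_i^\infty(y)$ are at distance $\sim m^{-i}$ but not contained in a common star, and exhibit a single diffused cut at scale $i$ (built from a vertex or edge of $X_i$ separating the two points, pulled back to $X_\infty$) whose value differs by $\gtrsim m^{-i}$ between $x$ and $y$; since $\|f(x)-f(y)\|_{L_1}$ dominates the absolute difference of any single coordinate (up to the measure normalization), we are done. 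Throughout, all constants depend only on $m$ because the only structural input is the fixed subdivision parameter $m$ and the diameter bound $2m^{-i}$ for pulled-back stars from Definition \ref{def-dbarinfty}; composing with Theorem \ref{thm-realization} then yields Theorems \ref{thm-main} and \ref{thm-length_space_embedding}.
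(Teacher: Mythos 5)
Your high-level framing (cut measures, diffusion, a lower bound witnessed at the critical scale) points in the right direction, but the architecture you propose --- an $\ell_1$-direct sum $f=\sum_i f_i$ of independent scale-$i$ pieces, each Lipschitz and of diameter $\lesssim m^{-i}$ --- cannot be made to work, and the gap is exactly in the Lipschitz upper bound. First, your conditions (a) and (b) are inconsistent as stated: if $\operatorname{Lip}(f_i)\lesssim m^{-i}$ then a pair at distance $\sim m^{-i}$ satisfies $\|f_i(x)-f_i(y)\|\lesssim m^{-2i}$, which contradicts the required lower bound $\gtrsim m^{-i}$ for $i>0$; the lower bound forces $\operatorname{Lip}(f_i)\gtrsim 1$ uniformly. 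Granting the charitable reading ($f_i$ uniformly Lipschitz with image of diameter $\lesssim m^{-i}$), the fatal divergence is at the \emph{coarse} scales, which you do not address: for a pair at distance $r\sim m^{-i}$, every scale $j<i$ contributes up to $\operatorname{Lip}(f_j)\cdot r\gtrsim r$, and there are infinitely many such $j$, so $\sum_j\|f_j(x)-f_j(y)\|=\infty$. Your proposed fix (diffusing each cut over a bounded window of scales) only speaks to the fine-scale overcounting and cannot repair this; no additive superposition of uniformly Lipschitz, scale-localized cut families is bilipschitz here. This is the known obstruction that makes sum-over-scales constructions lose a logarithmic factor.

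The paper's construction avoids this by not summing scales at all. It builds a single sequence of cut measures $\Si_i'$ on \emph{slices} of $X_i'$ by a mass-conserving diffusion: the measure on a slice $S\subset X_i'$ is redistributed among its children in $X_{i+1}'$ (local modifications of $\pi_i^{-1}(S)$ inside trimmed stars, with weights $\tfrac1m,\tfrac1{2m}$ as in Definition \ref{def-vertexchildmeasure}). Consequently each $d_{\Si_i'}$ is already a full metric approximating $\bar d_\infty$ at all coarser scales, the increments satisfy $|d_{\Si_{i+1}'}-d_{\Si_i'}|\le 4m^{-(i+1)}$ (Lemma \ref{lem-distancechange}), and the limit $\rho_\infty$ is one cut metric rather than a sum. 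The Lipschitz upper bound is the delicate mass-accounting of Lemma \ref{lem-dsiedge} and Corollary \ref{cor-dsistar} (the total mass of slices that can ever come to separate two points projecting into a common edge of $X_i$ is $\le m^{-i}$, proved by a martingale-type telescoping over generations), and the lower bound is the persistence-of-separation argument (Lemmas \ref{lem-persistenceofsides}, \ref{lem-persistenceofseparation}, \ref{lem-notinsametrimmedstar}): once a definite fraction of descendants of a separating slice leave the open stars of both points, they separate forever. Your proposal contains neither mechanism. Two further points you would still need: the cuts must be \emph{monotone} (so that monotone geodesics map isometrically --- this is forced infinitesimally and is why slices, i.e. antichains for the partial order, are the right objects), and the general case (infinite valence, no global monotone geodesics) requires the reduction via finite subsystems together with the finite-subset criterion for $L_1$-embeddability and ultralimit/Kakutani arguments of Section \ref{sec-generalcase}.
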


In a forthcoming paper \cite{ck-piexamples}, we show that if one imposes additional
conditions on an admissible inverse system $\{X_i\}$, the  inverse limit $X_\infty$
will carry a doubling measure $\mu$
which satisfies a Poincar\'e inequality, such that for $\mu$ a.e. $x\in X_\infty$, 
the tangent space $T_xX_\infty$ (in the sense of 
\cite{cheeger}) is $1$-dimensional.   
The results apply to Examples \ref{ex-laaksodiamond} and \ref{ex-laaksoahlforsregularpi}.
Moreover, in 
these two examples -- and typically for the spaces studied in \cite{ck-piexamples} --
the Gromov-Hausdorff tangent cones at almost every point will not
be bilipschitz homeomorphic to $\R$.  The non-embedding result
of \cite{ckdppi} then implies that such spaces do not bilipschitz embed
in Banach spaces which satisfy the Radon-Nikodym property.    Combining
this with Theorem \ref{thm-bilipschitzembedding}, we therefore obtain a
large class of examples of doubling spaces which embed in $L_1$, but not in any
Banach space satisfying the Radon-Nikodym property, cf. Corollary  \ref{cor-inL1notinl1}.

\subsection*{Monotone geodesics}
Suppose $\{X_i\}_{i\in\Z}$ is an admissible inverse system,
and  $\phi:X_\infty\ra \R$ 
is as in Theorem \ref{thm-easy_direction}.  Then $\phi$
picks out a distinguished class of paths, namely the paths $\ga:I\ra X_\infty$
such that the composition $\phi\circ\ga:I\ra \R$ is a homeomorphism onto its image, i.e.
$\phi\circ\ga:I\ra \R$ is a monotone.   (This is equivalent to saying that the projection
$\pi_i\circ\ga:I\ra X_i$ is  either direction preserving or direction reversing, with respect to the
direction on $X_i$.)
It is not difficult to 
see that such a path $\ga$ is a geodesic in $X_\infty$; see Section \ref{sec-notation_preliminaries}.   
We call the image of such a path
$\ga$ a {\bf monotone geodesic segment}  (respectively {\bf monotone ray, monotone geodesic} ) if
 the image  $\phi\circ\ga(I)\subset\R$ is a segment   (respectively is a ray, is all of $\R$).  
Monotone geodesics and related structures play an important role in the proof of 
Theorem \ref{thm-bilipschitzembedding}.  In fact, the proof
of Theorem \ref{thm-bilipschitzembedding} produces an embedding
$f:X_\infty\ra L_1$  with the additional property that it
maps monotone geodesic segments in $X_\infty$  isometrically to geodesic
segments in $L_1$. 

Now suppose $u:X\ra \R$ is as  in  Theorem \ref{thm-realization}.  As above, one obtains a distinguished
family of paths $\ga:I\ra X$, those for which $u\circ\ga:I\ra\R$ is a homeomorphism onto its image.
From the assumptions on $u$, it is easy to see   that $u$ induces a bilipschitz homeomorphism
from the image $\ga(I)\subset X$ to the image $(u\circ\ga)(I)\subset\R$, so $\ga(I)$ is a bilipschitz embedded path.
We call the images of such paths {\bf monotone}, although they need not be geodesics.  If  
$f_\infty:X\ra X_\infty$ is
a homeomorphism  provided by  Theorem \ref{thm-realization}, then $f_\infty$ maps monotone paths
in $X$ to monotone segments/rays/geodesics in $X_\infty$ because 
$\phi\circ f_\infty=u$.   Therefore,
by combining Theorems \ref{thm-realization} and \ref{thm-bilipschitzembedding}, it follows that
the embedding in Theorem  \ref{thm-main} can be chosen to map monotone paths in $X$  
to geodesics in $L_1$.

\subsection*{Discussion of the proof of Theorem \ref{thm-bilipschitzembedding}}
Before entering into the construction, we recall some observations 
from \cite{ckbv-old,ckbv,ckmetmon}
which motivate the setup, and also indicate the delicacy of the
embedding problem.  

Let $\{X_i\}_{i\in\Z}$ be an admissible inverse system.

Suppose $f:X_\infty\ra L_1$ is an $L$-bilipschitz embedding, and that
$X_\infty$ satisfies a Poincar\'e inequality with respect to a doubling
measure $\mu$  (e.g. Examples \ref{ex-laaksodiamond} and \ref{ex-laaksoahlforsregularpi}).  
Then there is a version
of Kirchheim's metric diffferentiation theorem  \cite{kirchheim},
which implies that for almost every $p\in X_\infty$, if one rescales
the map $f$ and passes to a limit, one obtains an $L$-bilipschitz
embedding $f_\infty:Z\ra L_1$, where $Z$ is a Gromov-Hausdorff tangent space
of $X_\infty$, such that
$(f_\infty)\restr_{\ga}:\ga\ra L_1$ is a constant speed geodesic
for every $\ga\subset Z$ which arises as a limit of 
(a sequence of rescaled) monotone geodesics in $X_\infty$.  When
$X_\infty$ is self-similar, as in Examples 
\ref{ex-laaksodiamond} and \ref{ex-laaksoahlforsregularpi}, 
then $Z$ contains
copies of $X_\infty$, and one concludes that $X_\infty$ itself
has an $L$-bilipschitz embedding $X_\infty\ra L_1$
which restricts to a constant speed geodesic embedding on each monotone geodesic
$\ga\subset X_\infty$.  In view of this, and the fact that any
bilipschitz embedding is constrained to have this behavior 
infinitesimally, our construction has been chosen so as to automatically
satisfy the constraint, i.e. it generates maps which restrict 
to isometric embeddings on monotone geodesics.

\begin{figure}[h] 

\begin{center}  
\includegraphics[scale=.8]{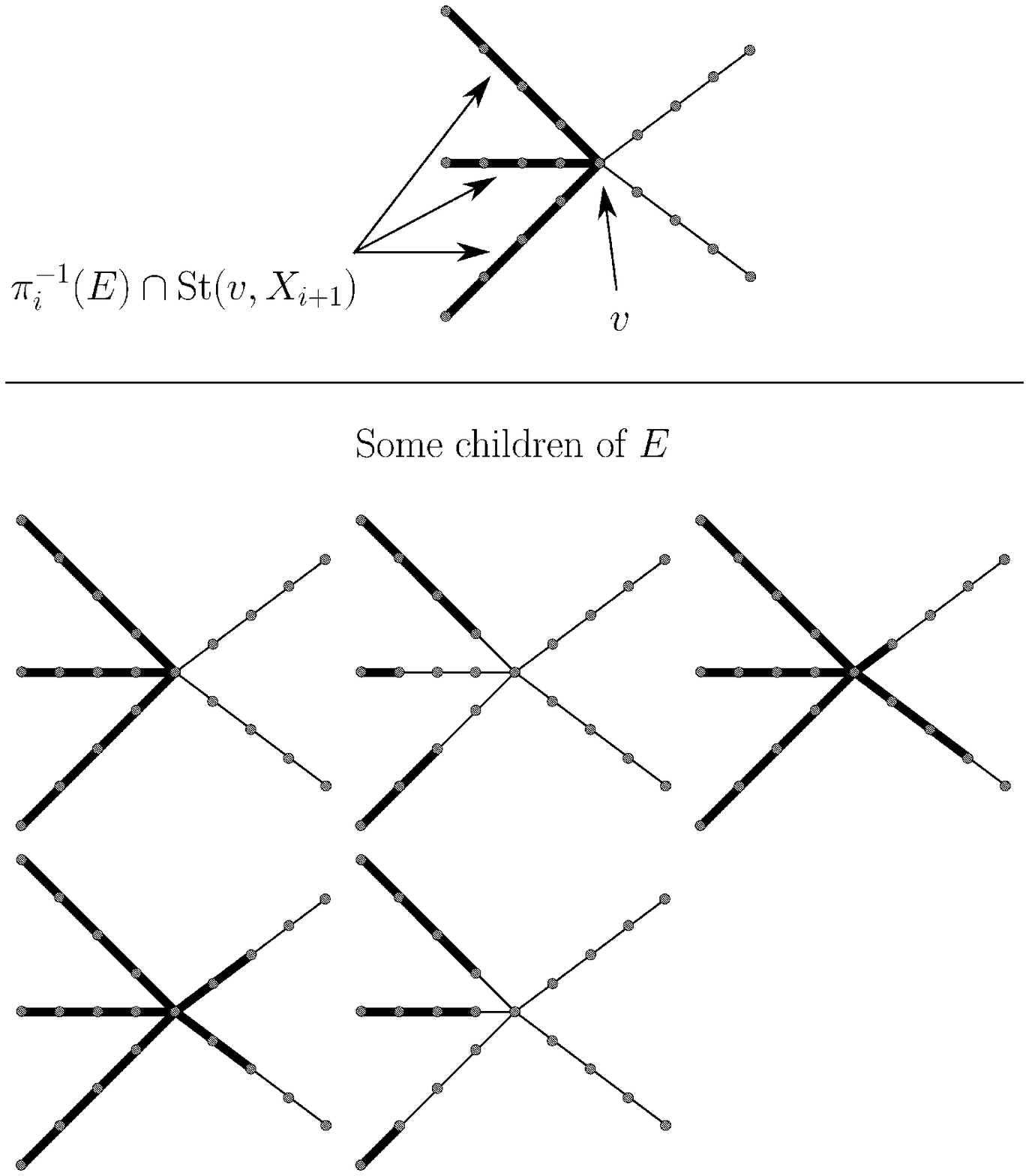} 
\caption{\label{fig-localmodification}}
\end{center}

\end{figure}

By \cite{assouad,dezalaur,ckbv}, producing a  bilipschitz embedding $f:X_\infty\ra L_1$
is equivalent to
showing that distance function $\bar d_\infty$ is comparable to a cut metric
$d_\Si$, i.e. a distance function $d_\Si$ on $X_\infty$ which is a superposition
of elementary cut metrics.  Informally speaking this means that
$$
d_\Si=\int_{2^{X_\infty}} \;d_E\,d\Si(E)\,.
$$
where $\Si$ is a cut measure on the subsets of $X_\infty$, 
and $d_E$ is the elementary cut (pseudo)metric associated with a subset $E\subset X_\infty$:
$$
d_E(x_1,x_2)=|\chi_E(x_1)-\chi_E(x_2)|\,.
$$
If $f:X_\infty\ra L_1$ restricts to an isometric embedding $f\restr_\ga:\ga\ra L_1$
for every monotone geodesic $\ga\subset X_\infty$, then one finds (informally speaking)
that the
cut measure $\Si$ is supported on  subsets $E\subset X_\infty$
with the property that for every monotone geodesic $\ga\subset X_\infty$,
the characteristic function $\chi_E$ restricts to a monotone function on $\ga$,
or equivalently, that the 
the intersections $E\cap \ga$ and $(X_\infty\setminus E)\cap \ga$ are both connected.
We call such subsets {\bf monotone}.

For simplicity we restrict the rest of our discussion to the case when $X_0\simeq \R$.
The reader may find it helpful to keep Example 
\ref{ex-laaksodiamond} in mind (modified with $X_i\simeq\R$ for $i<0$ as indicated earlier).

Motived by the above observations, the 
approach taken in the paper is to obtain the cut metric $d_\Si$ as a limit of a sequence
of cut metrics $\{d_{\Si_i'}\}_{i\geq 0}$, where $\Si_i'$ is a cut measure on $X_i$ 
supported on monotone subsets.  For technical reasons, we choose $\Si_i'$ so that
every monotone subset $E$ in the 
support of $\Si_i'$   is a subcomplex of $X_i'$
(see Definition \ref{def-admissibleinversesystem}),
and $E$ is precisely the 
set of points $x\in X_i$ such that there is a monotone
geodesic $c:[0,1]\ra X_i$ where $\pi_0^i\circ c$ is increasing, $c(0)=x$, and
$c(1)$ lies in the boundary of $E$; thus one may think of $E$ as the set of points 
``lying to the left'' of the boundary $\D E$.

We construct the sequence $\{\Si_i'\}$ inductively
as follows.  The cut measure $\Si_0'$ is the atomic measure which assigns mass
$\frac1m$ to each monotone subset of the form $(-\infty,v]$, where $v$ is vertex of
$X_0'\simeq \R$.  Inductively we construct $\Si_{i+1}'$ from $\Si_i'$ by a diffusion process.
For every monotone set $E\subset X_i$ in the support of $\Si_i'$, we take the 
$\Si_i'$-measure living on $E$, and redistribute it
over a family of monotone sets $E'\subset X_{i+1}$, called the {\bf children} of $E$.
The children of $E\subset X_i$ are monotone sets $E'\subset X_{i+1}$ obtained from
the inverse image $\pi_i^{-1}(E)$ by modifying the boundary locally: for each
vertex $v$ of $X_{i+1}$ lying in the boundary of $\pi_i^{-1}(E)$, we move the boundary
within the open star of $v$.    An example of this local modification
procedure is depicted in Figure \ref{fig-localmodification}, where $m=4$.

The remainder of the proof involves a series of estimates on the cut measures $\Si_i'$
and cut metrics $d_{\Si_i'}$, which are proved by induction on $i$ using the form of the
diffusion process, see Section \ref{sec-estimates}.
One shows that the sequence of pseudo-metrics $\{\rho_i=d_{\Si_i'}\circ \pi_i^\infty\}$
on $X_\infty$ converges geometrically to a distance function 
which will be the cut metric $d_\Si$ for a cut measure $\Si$ on $X_\infty$. 
To prove that  $d_\Si$ is  comparable
to  $d_\infty$, the  idea is to show (by induction) that the cut metric $d_{\Si_i'}$ 
resolves pairs of points $x_1,x_2\in X_i$ whose separation is $> C\,m^{-i}$.

\subsection*{Organization of the paper}
In Section \ref{sec-notation_preliminaries} we collect notation and establish some basic
properties of admissible inverse systems.   Theorem \ref{thm-easy_direction} is proved in Section 
\ref{subsec-canonical_map}.
Section \ref{sec-inverse_systems_path_metrics}
considers a special class of admissible inverse systems which come with natural metrics, e.g.
Examples \ref{ex-laaksodiamond}, \ref{ex-laaksoahlforsregularpi}.
In Section \ref{sec-realizing_metric_spaces} we prove Theorem
\ref{thm-realization}.
Sections \ref{sec-special_case}--\ref{sec-generalcase} give the proof of Theorem
\ref{thm-bilipschitzembedding}.    A special case of Theorem \ref{thm-bilipschitzembedding}
is introduced in Section \ref{sec-special_case}.   In Section  \ref{sec-slices_measures} we 
begin the proof of the special case by developing the structure of slices and associated
slice measures, which are closely
related to the monotone sets  in the above discussion of the proof of 
Theorem \ref{thm-bilipschitzembedding}.    Section \ref{sec-estimates} obtains estimates
on the slice measures which are needed for the embedding theorem.
Section \ref{sec-completion_assumption} completes the proof of
Theorem \ref{thm-bilipschitzembedding} in the special case introduced in 
Section \ref{sec-special_case}.
Section \ref{sec-generalcase} completes the proof in the general case.
Section \ref{sec-laaksoexamples} shows that the space in 
Example  \ref{ex-laaksoahlforsregularpi} is bilipschitz homeomorphic to a  Laakso space from
\cite{laakso}.
In Section \ref{sec-realization_generalization} we consider a generalization of
Theorem \ref{thm-realization} to maps $u:X\ra Y$, where $Y$ is a  general
metric space equipped with a sequence of coverings.

We refer the reader to the beginnings of the individual sections for  more detailed descriptions of
their contents.

\section{Notation and preliminaries}
\label{sec-notation_preliminaries}

In this section  $\{X_i\}_{i\in \Z}$ will be an admissible inverse system, and 
$m$ will  be the parameter appearing in Definition \ref{def-admissibleinversesystem}.

\subsection{Subdivisions, stars, and trimmed stars}
\label{subsec-subdivisions}

Let $Z$ be a graph.  Let $Z^{(k)}$ denote the $k$-fold iterated subdivision of $Z$, where
each iteration  subdivides every edge into $m$ subedges, and let $Z'=Z^{(1)}$.

If $v$ is a vertex of a graph $\G$, then $\Star(v,\G)$  and $\Star^o(v,\G)$
denote the closed and  open stars of $v$, respectively.

\begin{definition}
Let $Z$ be a graph,  
and  $v\in Z$ be a vertex.
The  {\bf trimmed star of  $v$ in $Z$} is the 
union of the edges of $Z'$ which lie in the open star $\Star^o(v,Z)$, or alternately, 
the union of the edge paths in $Z'$ starting at $v$,  with  $(m-1)$ edges.
We denote the trimmed star by $\tstar(v,Z)$.   
We will only use this when $Z=X_i$ or $Z=X_i'$ below.  
\end{definition}
Note that if $v$ is a vertex
of $X_i$, then $\tstar(v,X_i)$ is also the closed ball $\ol{B(v,\frac{m-1}{m}\cdot m^{-i})}\subset X_i$
with respect to the path metric $d_i$.

\bigskip\bigskip

\subsection{Basic properties of admissible inverse systems and the distance function
$\bar d_\infty$}\;\;
Let $\{X_i\}_{i\in \Z}$ be an admissible inverse system with inverse limit $X_\infty$.  
For every $i$, we let $V_i$ be the vertex set of $X_i$, and $V_i'$ be the vertex
set of $X_i'$.  
For all $j\geq i$, let $\pi_i^j:X_j\ra X_i$ be the composition $\pi_{j-1}\circ\ldots\circ\pi_i$.
Then $\pi_i^j:X_j\ra X_i^{(j-i)}$ is simplicial and restricts to an 
isomorphism on each edge.   It is also $1$-Lipschitz with respect to the respective path metrics
$d_j$ and $d_i$.

\begin{lemma}
\label{lem-inastar}
For every  $x,x'\in X_\infty$ there exist  $i\in \Z$, $v\in V_i$  such that
$x,x'\in (\pi_i^\infty)^{-1}(\Star(v,X_i))$.
\end{lemma}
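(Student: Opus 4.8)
The plan is to unwind the definition of the inverse limit together with condition (3) of Definition~\ref{def-admissibleinversesystem}, which is precisely the connectedness condition designed to make such statements work. Recall that a point of $X_\infty$ is a compatible sequence $x=(x_i)_{i\in\Z}$ with $\pi_i(x_{i+1})=x_i$, and similarly $x'=(x_i')_{i\in\Z}$.

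\begin{proof}[Proof sketch]
First I would apply condition (3) of Definition~\ref{def-admissibleinversesystem} to the points $x_0=\pi_0^\infty(x)\in X_0$ and $x_0'=\pi_0^\infty(x')\in X_0$: there is some $k\leq 0$ such that the projections $\pi_k^0(x_0)=x_k$ and $\pi_k^0(x_0')=x_k'$ lie in the same connected component $C$ of $X_k$. Now I would argue that since $x_k$ and $x_k'$ lie in a common connected component of the graph $X_k$, and each edge of $X_k$ has $d_k$-length $m^{-k}$, we have $d_k(x_k,x_k')\leq N\,m^{-k}$ for some finite integer $N$ (the edge-distance between $x_k$ and $x_k'$ in $C$).

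Next I would pass to an index far enough in the negative direction to absorb this distance into a single star. Using that $\pi_j^k\colon X_k\to X_j$ for $j\leq k$ is induced by iterated subdivision and is $1$-Lipschitz — and more precisely that $d_j$ is just $d_k$ viewed with edges regrouped, so that a path of $d_k$-length $N\,m^{-k}$ becomes a path of the same metric length $N\,m^{-k}$ in $X_j$ — I would choose $j\leq k$ with $m^{j}\leq N^{-1}m^{k}$, equivalently $N\,m^{-k}\leq m^{-j}$. Then $d_j(x_j,x_j')\leq m^{-j}$ where $x_j=\pi_j^\infty(x)$, $x_j'=\pi_j^\infty(x')$. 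Recall from Subsection~\ref{subsec-subdivisions} that $\overline{B(v,\tfrac{m-1}{m}m^{-j})}=\tstar(v,X_j)\subset\Star(v,X_j)$; a short argument shows any two points at $d_j$-distance $\leq m^{-j}$ lie in a common closed star: take $v$ to be a vertex of $X_j$ on (or adjacent to) a shortest edge-path between $x_j$ and $x_j'$ — since the two points are within one "edge length plus endpoints" of each other, one can choose $v$ so that both lie in $\Star(v,X_j)$. Hence $x,x'\in(\pi_j^\infty)^{-1}(\Star(v,X_j))$, as desired, with $i=j$.

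The main obstacle I anticipate is the last geometric step: verifying carefully that two points of a metric graph with edge length $\ell$ that are within $d$-distance $\ell$ of each other necessarily lie in the closed star of a single vertex. This is true but requires a small case analysis (whether $x_j$, $x_j'$ lie on the same edge, on adjacent edges, or at vertices), and one must be slightly careful if one insists on the sharp bound; replacing $m^{-j}$ by a slightly smaller threshold (e.g. $\tfrac12 m^{-j}$) when choosing $j$ makes this entirely routine and costs nothing since $j$ may be taken arbitrarily negative. The rest is bookkeeping with condition (3) and the Lipschitz property of the projection maps.
\end{proof}
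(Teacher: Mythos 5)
Your proposal is correct and follows essentially the same route as the paper: invoke condition (3) to get a common connected component at some level, take a finite-length path there, and project to a sufficiently negative index so that the (length-preserved) path fits inside a single closed star. The only cosmetic difference is that the paper works directly with the path having length less than one edge length (which avoids the boundary case you flag), rather than with the distance between the two endpoints; your suggested fix of using a slightly smaller threshold handles that equally well.
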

\begin{proof}
 By (3) of Definition \ref{def-admissibleinversesystem}, there is a $j\in \Z$
such that $\pi_j(x),\pi_j(x')$ are contained in the same connected component of $X_j$.   If  $\ga\subset X_j$
is a path from $\pi_j(x)$ to $\pi_j(x')$ with $d_j$-length $N$, then for all $i\leq j$, the projection
$\pi_i^j(\ga)$ is a path in $X_i^{(j-i)}$ with $d_i$-length $\leq N$.  Therefore if $N<m^{-i}$ then 
$\pi_i^j(\ga)$ will be contained in $\Star(v,X_i)$ for some $i\in V_i$. 
\end{proof} 

Suppose $\hat d$ is a pseudo-distance on $X_\infty$ with the property that
$$
\diam_{\hat d}((\pi_j^\infty)^{-1}(\Star(v,X_j))\leq 2m^{-j}
$$ 
for all $j\in\Z$, $v\in V_j$.
Then
for every $x,x'\in X_\infty$,  we have $\hat d(x,x')\leq 2m^{-i}$, where $i\in \Z$ is as in 
Lemma \ref{lem-inastar}.
It follows that the supremum $\bar d_\infty$ of all such pseudo-distance functions  
takes finite values,
i.e. is  a well-defined pseudo-distance function.

\begin{lemma}[Alternate definition of $\bar d_\infty$]
\label{lem-altdefdbarinfty}
Suppose $x,x'\in X_\infty$.   Then $\bar d_\infty(x,x')$ is the infimum of the sums
$\sum_{k=1}^n\,2m^{-i_k}$, such that there exists a finite sequence
$$
x=x_0,\ldots,x_n=x'\in X_\infty
$$
where $\{\pi^\infty_{i_k}(x_{k-1}),\pi_{i_k}^\infty(x_k)\}$ is contained in the closed star
$\Star(v,X_{i_k})$ for 
 some  vertex $v$  of $X_{i_k}$, 
for every $k\in \{1,\ldots,n\}$.
\end{lemma}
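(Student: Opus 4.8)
The plan is to recognize this statement as the standard fact that the largest pseudo-distance obeying a family of ``diameter $\le$ constant'' constraints is computed by an explicit chain formula. Write $\rho(x,x')$ for the infimum appearing in the lemma, taken over all \emph{admissible chains} $x=x_0,\dots,x_n=x'$ in $X_\infty$ (those for which $\{\pi^\infty_{i_k}(x_{k-1}),\pi^\infty_{i_k}(x_k)\}$ is contained in the closed star of some vertex of $X_{i_k}$ for each $k$), with cost $\sum_{k=1}^n 2m^{-i_k}$. I will prove $\rho=\bar d_\infty$ by two inequalities, after first checking that $\rho$ is a genuine, finite-valued pseudo-distance on $X_\infty$.

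First I would verify that $\rho$ is a pseudo-distance: symmetry is immediate by reversing a chain, the triangle inequality follows by concatenating chains, and $\rho(x,x)=0$ because for every $i\in\Z$ the one-term chain $x_0=x_1=x$ with index $i_1=i$ is admissible — the point $\pi_i^\infty(x)$ always lies in the closed star of some vertex of $X_i$ (an endpoint of an edge carrying it, or the point itself if it is an isolated vertex) — and has cost $2m^{-i}\to 0$. Finiteness of $\rho$ is where Lemma \ref{lem-inastar} enters: given $x,x'$ there are $i\in\Z$, $v\in V_i$ with $x,x'\in(\pi_i^\infty)^{-1}(\Star(v,X_i))$, so the one-term chain $x_0=x$, $x_1=x'$ with index $i$ is admissible and $\rho(x,x')\le 2m^{-i}<\infty$. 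These same one-term chains show that $\rho$ satisfies $\diam_\rho\big((\pi_j^\infty)^{-1}(\Star(v,X_j))\big)\le 2m^{-j}$ for every $j\in\Z$, $v\in V_j$; hence $\rho$ is one of the pseudo-distances whose supremum defines $\bar d_\infty$, so $\rho\le\bar d_\infty$ pointwise by maximality of $\bar d_\infty$.

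For the reverse inequality $\bar d_\infty\le\rho$, I would use that $\bar d_\infty$ itself obeys the defining diameter constraints (being the supremum, in fact the maximum, of pseudo-distances that do, as observed just before the lemma). Thus for any admissible chain $x=x_0,\dots,x_n=x'$ and each $k$ we have $\bar d_\infty(x_{k-1},x_k)\le 2m^{-i_k}$, and summing via the triangle inequality for $\bar d_\infty$ gives $\bar d_\infty(x,x')\le\sum_{k=1}^n 2m^{-i_k}$; taking the infimum over all admissible chains yields $\bar d_\infty(x,x')\le\rho(x,x')$. Combined with the previous paragraph this gives $\rho=\bar d_\infty$.

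There is no serious obstacle here: the argument is the routine ``path-metric'' unwinding of a supremal-pseudo-metric definition. The only points needing a word of care are that $\rho$ takes finite values — which is precisely the role of Lemma \ref{lem-inastar} — and that every point of a graph $X_i$, possibly disconnected or with isolated vertices, lies in the closed star of some vertex, so that constant one-term chains are admissible and $\rho(x,x)=0$.
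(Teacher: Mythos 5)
Your proof is correct and follows essentially the same route as the paper: show the chain infimum is a finite-valued pseudo-distance (via Lemma \ref{lem-inastar}) satisfying the defining star-diameter constraints, so it is $\le \bar d_\infty$ by maximality, and then get the reverse inequality from the triangle inequality for $\bar d_\infty$ together with those same constraints. The paper's version is just a terser statement of the identical argument.
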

\begin{proof}
Let $\hat d_\infty(x,x')\in[0,\infty]$ be the infimum defined above.
By Lemma \ref{lem-inastar} the infimum will be taken over a nonempty set of sequences, 
and so $\hat d_\infty(x,x')\in[0,\infty)$.  It follows that $\hat d_\infty$ 
 is a well-defined pseudo-distance
satisfying the condition that 
$\diam_{\hat d_\infty}((\pi_i^\infty)^{-1}(\Star(v,X_i))\leq 2m^{-i}$ for every 
$v\in V_i$, $i\in \Z$.
Therefore $\hat d_\infty \leq \bar d_\infty$ from the definition of $\bar d_\infty$.
On the other hand the definition of $\bar d_\infty$ and the triangle
inequality imply  $\bar d_\infty\leq \hat d_\infty$.
\end{proof}

\begin{lemma}
\label{lem-starscale}
Suppose $x,x'\in X_\infty$.
\begin{enumerate}
\setlength{\itemsep}{.5ex}
\item If $\bar d_\infty(x,x')\leq m^{-j}$ for some $j\in\Z$, then $\pi_j(x),\pi_j(x')$ belong to $\Star(v,X_j)$
for some $v\in V_j$. 
\item If $x\in X_\infty$ and 
$r\leq \frac{(m-2)}{m}m^{-j}$ for some $j\geq 0$, then $\pi_j(B(x,r))$ is contained
in the trimmed star $\tstar(v,X_j)$
for some $v\in V_j$. 
\end{enumerate}
\end{lemma}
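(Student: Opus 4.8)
The plan is to deduce both parts from the chain description of $\bar d_\infty$ in Lemma~\ref{lem-altdefdbarinfty}, together with two elementary facts about the metric graphs $X_i$: a closed star $\Star(v,X_i)$ has $d_i$-diameter at most $2m^{-i}$, and distinct vertices of $X_i$ lie at $d_i$-distance at least $m^{-i}$ from one another.

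For part~(1): assuming $\bar d_\infty(x,x')\le m^{-j}$, for each $\eps\in(0,m^{-j})$ pick a chain $x=x_0,\dots,x_n=x'$ with $\sum_k 2m^{-i_k}<m^{-j}+\eps$ and $\{\pi_{i_k}(x_{k-1}),\pi_{i_k}(x_k)\}$ contained in a closed star of $X_{i_k}$ for each $k$. Every term satisfies $2m^{-i_k}<2m^{-j}$, so $i_k>j$; hence $\pi_j^{i_k}\colon X_{i_k}\to X_j$ is a well-defined $1$-Lipschitz map, and since the $k$-th pair lies in a common closed star of $X_{i_k}$ we get $d_j(\pi_j(x_{k-1}),\pi_j(x_k))\le 2m^{-i_k}$. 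Summing over $k$ and letting $\eps\to0$ gives $d_j(\pi_j(x),\pi_j(x'))\le m^{-j}$. The conclusion then follows from the claim that any $p,q\in X_j$ with $d_j(p,q)\le m^{-j}$ lie in a common closed star: following a $d_j$-geodesic from $p$ to $q$, either it never leaves the edge of $p$, or it first meets a vertex $v$ with $d_j(p,v)\le m^{-j}$, and then the remaining portion has length $<m^{-j}$ unless $p=v$, so it cannot reach a second vertex and $q$ lies on an edge at $v$; in both cases $p,q\in\Star(v,X_j)$.

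For part~(2): running the argument of~(1) with the open ball — so now $\sum_k 2m^{-i_k}<r\le\tfrac{m-2}{m}m^{-j}<m^{-j}$, which still forces every $i_k>j$ — shows $d_j(\pi_j(x),\pi_j(y))<r$ for every $y\in B(x,r)$, hence $\pi_j(B(x,r))\subseteq B_{d_j}(\pi_j(x),r)$, and moreover~(1) shows each point of $\pi_j(B(x,r))$ lies in a closed star of $X_j$ also containing $\pi_j(x)$. Since $\tstar(v,X_j)=\overline{B_{d_j}(v,\tfrac{m-1}{m}m^{-j})}$, it remains to find $v\in V_j$ with $\pi_j(B(x,r))\subseteq\tstar(v,X_j)$. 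If $\pi_j(x)$ is a vertex, take $v=\pi_j(x)$, as $r<\tfrac{m-1}{m}m^{-j}$. Otherwise $\pi_j(x)$ is interior to some edge $e=[a,b]$ of $X_j$ with subdivision $e_1,\dots,e_m$ in $X_j'$; one takes $v\in\{a,b\}$ on the appropriate side of $\pi_j(x)$ and checks, using $r\le\tfrac{m-2}{m}m^{-j}$ together with the co-starring constraint from~(1), that $\pi_j(B(x,r))$ does not leave $\tstar(v,X_j)$.

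The chain-to-projection estimates and the two graph facts are routine. The genuine difficulty is the final step of~(2) when $\pi_j(x)$ lies near the midpoint of an edge of $X_j$: the full metric ball $B_{d_j}(\pi_j(x),r)$ is then not contained in any trimmed star — a naive nearest-vertex estimate only handles $r\le\tfrac{m-2}{2m}m^{-j}$ — so one must use that $\pi_j(B(x,r))$ is strictly smaller, either via the fact from~(1) that all of its points are co-starred with $\pi_j(x)$, or by descending one level and invoking that $\pi_j^{j+1}$ carries every closed star of $X_{j+1}$ into some trimmed star of $X_j$, in order to pin it down to a single $\tstar(v,X_j)$.
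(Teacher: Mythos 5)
Your part~(1) is correct and is essentially the paper's own argument: project a near-optimal chain from Lemma~\ref{lem-altdefdbarinfty} down to $X_j$ (using that every level $i_k$ in the chain exceeds $j$ because $2m^{-i_k}<2m^{-j}$), conclude $d_j(\pi_j(x),\pi_j(x'))\le m^{-j}$, and finish with the elementary fact that two points of $X_j$ at path-distance at most $m^{-j}$ share a closed star. The paper compresses that last step into ``as $\eps$ is arbitrary''; your case analysis supplies it correctly.

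For part~(2), however, the difficulty you flag at the end is not merely the hard step --- it cannot be closed, because the statement is false as written when $\pi_j(x)$ sits near the midpoint of an edge and $r$ is near the stated bound. Take the trivial admissible system $X_i=\R$ with $V_i=m^{-i}\Z$ and $\pi_i=\id$; then $X_\infty=\R$ and $\bar d_\infty$ is the standard metric (the Euclidean metric realizes the constraint of Definition~\ref{def-dbarinfty} with equality, and chains of fine stars give the matching upper bound). With $m=4$, $j=0$, $x=\tfrac12$, the hypothesis allows $r=\tfrac12$, yet $\pi_0(B(x,\tfrac12))=(0,1)$ is contained in no trimmed star $[k-\tfrac34,k+\tfrac34]$, $k\in\Z$. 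This also shows why neither of your proposed repairs works: the co-starring constraint from~(1) only places $\pi_j(B(x,r))$ in $\Star(a,X_j)\cup\Star(b,X_j)$, which does not prevent the image from reaching both end subedges of $[a,b]$ (each such point individually shares a closed star with the midpoint); and descending to level $j+1$ via~(1) requires $r\le m^{-(j+1)}$, far smaller than $\tfrac{m-2}{m}m^{-j}$. What is true --- and is all the paper subsequently uses --- are two weaker statements your method does yield. First, the two-point version: if $\bar d_\infty(x,x')\le\tfrac{m-2}{m}m^{-j}$ then $\{\pi_j(x),\pi_j(x')\}$ lies in a single trimmed star; indeed the projected chain is a path of length $<m^{-j}$, so it meets at most one vertex $v$ of $X_j$ --- if it meets one, both endpoints lie within $\tfrac{m-2}{m}m^{-j}+\eps<\tfrac{m-1}{m}m^{-j}$ of $v$, hence in $\tstar(v,X_j)$; if it meets none, both endpoints lie on one edge $[a,b]$ at mutual distance $\le\tfrac{m-2}{m}m^{-j}$, and since the complements of $\tstar(a,X_j)$ and $\tstar(b,X_j)$ inside that edge are separated by exactly $\tfrac{m-2}{m}m^{-j}$, the pair lies in one of the two. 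This is what Lemma~\ref{lem-trimmedstarscale} needs. Second, the ball version does hold for $r\le\tfrac{m-2}{2m}m^{-j}$ by your ``naive nearest-vertex estimate,'' which covers the use in Lemma~\ref{lem-embedballinsystem} (where $R<m^{-(i+1)}\le\tfrac{m-2}{2m}m^{-i}$ once $m\ge4$). So the right conclusion is that part~(2) must be restated, not that a cleverer final step is missing.
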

\begin{proof}
(1).  
Pick $\eps>0$.   Since $\bar d_\infty(x,x')\leq m^{-j}$, there is  a sequence
$x=y_0,\ldots,y_k=x'\in X_\infty$, where for $\ell\in \{1,\ldots,k\}$, the points  $y_{\ell-1},y_\ell$
lie in $\pi_{i_\ell}^{-1}(\Star(v_\ell,X_{i_\ell}))$, $v_\ell\in V_{i_\ell}$, and 
$$
\sum_\ell 2m^{-i_\ell}\leq m^{-j}+\eps\,.
$$
Taking $\eps< m^{-j}$, we may assume
that $i_\ell\geq j$ for all $\ell\in \{1,\ldots,k\}$.    
Since $\pi_{i_\ell}(y_{\ell-1}),\pi_{i_\ell}(y_\ell)\in \Star(v_\ell,X_{i_\ell})$, there is a path from
$\pi_{i_\ell}(y_{\ell-1})$ to $\pi_{i_\ell}(y_\ell)$
in  $X_{i_\ell}$ of $d_{i_\ell}$-length $\leq 2m^{-i_\ell}$.  
Since $\pi_j^{i_\ell}:(X_{i_\ell},d_{i_\ell})\ra (X_j,d_j)$ is $1$-Lipschitz for all $j$, we get that there is an 
 path from $\pi_j(x)$ to $\pi_j(x')$ in $X_j$ with $d_j$-length at most 
$$
\sum_\ell 2m^{-i_\ell}< m^{-j}+\eps\,.
$$
As $\eps$ is arbitrary, $\pi_j(x)$ and $\pi_j(x')$ lie in $\Star(v,X_j)$ for some $j\in V_j$.

(2).  The proof is similar to (1). 
\end{proof}

\begin{corollary}
$\bar d_\infty$ is a distance function on $X_\infty$.
\end{corollary}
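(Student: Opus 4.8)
The plan is to show that $\bar d_\infty(x,x')>0$ whenever $x\neq x'$ in $X_\infty$; since $\bar d_\infty$ is already known to be a well-defined pseudo-distance, this is all that remains. Suppose $x\neq x'$. Because $X_\infty$ is the inverse limit of the system $\{X_i\}$, there must be some index $j$ with $\pi_j^\infty(x)\neq \pi_j^\infty(x')$; in fact, since the projections $\pi_i^j$ are simplicial and restrict to isomorphisms on edges, once the projections to $X_j$ differ they continue to differ for all larger indices, and we may choose $j$ large enough that $\pi_j^\infty(x)$ and $\pi_j^\infty(x')$ are not merely distinct but lie at $d_j$-distance bounded below by a definite fraction of $m^{-j}$ — e.g. they are not both contained in a single trimmed star $\tstar(v,X_j)$, or more crudely $d_j(\pi_j^\infty(x),\pi_j^\infty(x'))> m^{-j}$. (If this failed for every $j$, the projections would be forced to agree in the limit by the definition of $\bar d_\infty$-geometry, contradicting $x\neq x'$; this is where one uses that distinct points of the inverse limit are genuinely separated at some finite stage.)

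Next I would invoke the contrapositive of Lemma \ref{lem-starscale}(1): if $\bar d_\infty(x,x')\leq m^{-j}$, then $\pi_j^\infty(x)$ and $\pi_j^\infty(x')$ both lie in $\Star(v,X_j)$ for a common vertex $v\in V_j$, hence are joined by a $d_j$-path of length $\leq 2m^{-j}$. So fix a $j$ for which $\pi_j^\infty(x)$ and $\pi_j^\infty(x')$ are \emph{not} contained in any common closed star of $X_j$ — such a $j$ exists by the previous paragraph, after passing to a large enough index, since as $j$ increases the stars of $X_j$ shrink to scale $m^{-j}\to 0$ while the projected points remain distinct. Then Lemma \ref{lem-starscale}(1) forces $\bar d_\infty(x,x')> m^{-j}>0$, and we are done.

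The step I expect to be the main obstacle is the one isolated in the first paragraph: justifying that two distinct points of $X_\infty$ are separated by a common-star obstruction at some finite stage $j$. The subtlety is that $\bar d_\infty$ is defined as a supremum of pseudo-distances, not directly in terms of the projections, so one has to rule out the possibility that $x$ and $x'$ project into a common star at every level yet are distinct as points of the inverse limit. The resolution is combinatorial: if $\pi_j^\infty(x)$ and $\pi_j^\infty(x')$ lay in a common closed star of $X_j$ for all $j$, one shows using condition (2) of Definition \ref{def-admissibleinversesystem} (the projections are isomorphisms on edges) that the two compatible sequences $(\pi_j^\infty(x))_j$ and $(\pi_j^\infty(x'))_j$ must in fact coincide, because the diameter of the preimage of a star is controlled and forces the threads to merge — contradicting $x\neq x'$. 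Once that combinatorial fact is in hand, the rest is just an application of Lemma \ref{lem-starscale}(1) and is routine.
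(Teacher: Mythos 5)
Your proposal is correct and is essentially the paper's own argument run in the contrapositive: the paper assumes $\bar d_\infty(x,x')=0$, uses Lemma \ref{lem-starscale}(1) to place $\pi_j^\infty(x),\pi_j^\infty(x')$ in a common star of $d_j$-diameter at most $2m^{-j}$ for every $j$, and pushes this down via the $1$-Lipschitz projections to conclude $\pi_i^\infty(x)=\pi_i^\infty(x')$ for all $i$. Your version starts instead from $x\neq x'$, observes that the projections separate at some finite stage by a definite $d_{j}$-distance (so that for large $j$ they cannot share a closed star), and applies the same lemma in contrapositive form, which is the same argument with the same key ingredients.
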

\begin{proof}
Suppose $x,x'\in X_\infty$,  $\bar d_\infty(x,x')=0$, and $i\in\Z$.   By Lemma \ref{lem-starscale},
for all  $j\geq i$ the set
$\{\pi_j^\infty(x),\pi_j^\infty(x')\}$ is contained in the star of some vertex $v\in V_j$.
Since $\pi_i^j:(X_j,d_j)\ra (X_i,d_i)$ is $1$-Lipschitz, it follows that $\{\pi_i^\infty(x),\pi_i^\infty(x')\}$
is contained in a set of $d_i$-diameter $\leq 2m^{-j}$.   Since $j$ is arbitrary, this means that
$\pi_i^\infty(x)=\pi_i^\infty(x')$.
\end{proof}

The following is a sharper statement:
\begin{lemma}
\label{lem-trimmedstarscale}
Suppose $x_1,x_2\in X_\infty$ are distinct points.    Let $j$ be the minimum of the indices $k\in \Z$ such that
$\{\pi_k(x_1),\pi_k(x_2)\}$ is not contained in the  trimmed star $\tstar(v,X_k)$ for any $v\in V_k$. 
Then 
\begin{equation}
\frac{(m-2)}{m}m^{-j}<\bar d_\infty(x_1,x_2)\leq 2m^{-(j-1)}\,.
\end{equation} 
\end{lemma}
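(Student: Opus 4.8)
I would prove Lemma \ref{lem-trimmedstarscale} by establishing the two inequalities separately, using Lemma \ref{lem-altdefdbarinfty} (the alternate ``$\delta$-chain'' definition of $\bar d_\infty$) for the upper bound and a direct contradiction argument for the lower bound, with Lemma \ref{lem-starscale} doing the heavy lifting on the lower side.  The minimality of $j$ is the hinge: by definition of $j$, for every $k<j$ the pair $\{\pi_k(x_1),\pi_k(x_2)\}$ \emph{is} contained in a trimmed star $\tstar(v,X_k)$ for some $v\in V_k$, while at level $j$ it is not.

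\textbf{Upper bound.}  First I would observe that at level $j-1$ the pair $\{\pi_{j-1}(x_1),\pi_{j-1}(x_2)\}$ lies in some $\tstar(v,X_{j-1})$, which in particular is contained in the closed star $\Star(v,X_{j-1})$.  Hence the single-step chain $x_1,x_2$ witnesses, via Lemma \ref{lem-altdefdbarinfty}, that $\bar d_\infty(x_1,x_2)\le 2m^{-(j-1)}$.  This is essentially immediate once one unwinds the definitions; the only thing to check is that $\tstar(v,X_{j-1})\subset\Star(v,X_{j-1})$, which is clear since the trimmed star is by construction a union of edges of $X_{j-1}'$ lying inside the open star.

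\textbf{Lower bound.}  Suppose for contradiction that $\bar d_\infty(x_1,x_2)\le \frac{m-2}{m}m^{-j}$.  I want to show this forces $\{\pi_j(x_1),\pi_j(x_2)\}$ into a single trimmed star at level $j$, contradicting minimality of $j$.  The natural tool is part (2) of Lemma \ref{lem-starscale}: taking $x=x_1$ and $r=\bar d_\infty(x_1,x_2)\le \frac{m-2}{m}m^{-j}$, we get that $\pi_j(B(x_1,r))$ is contained in $\tstar(v,X_j)$ for some $v\in V_j$; since $x_2\in \overline{B(x_1,r)}$, a small limiting/closure argument (or applying the statement with a slightly larger radius, still $\le\frac{m-2}{m}m^{-j}$ after shrinking, or noting $\pi_j$ is continuous and $\tstar$ is closed) puts $\pi_j(x_2)$ in the same $\tstar(v,X_j)$.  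Then $\{\pi_j(x_1),\pi_j(x_2)\}\subset\tstar(v,X_j)$, contradicting the defining property of $j$.  Hence $\bar d_\infty(x_1,x_2)>\frac{m-2}{m}m^{-j}$.

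\textbf{Main obstacle.}  The delicate point is the boundary case in the lower-bound argument: Lemma \ref{lem-starscale}(2) is stated for the open ball $B(x,r)$, but $x_2$ need only satisfy $\bar d_\infty(x_1,x_2)\le r$, i.e.\ $x_2$ lies in the closed ball.  I expect the cleanest fix is to run the argument with radius $r'$ strictly between $\bar d_\infty(x_1,x_2)$ and $\frac{m-2}{m}m^{-j}$ whenever the former is strictly smaller, and to handle the equality case $\bar d_\infty(x_1,x_2)=\frac{m-2}{m}m^{-j}$ by a separate compactness/closedness remark (the trimmed star is a finite subcomplex, hence closed, and $\pi_j$ is continuous, so $\pi_j(x_2)=\lim \pi_j(y_n)$ for $y_n\to x_2$ with $\bar d_\infty(x_1,y_n)<r'$ stays in $\tstar(v,X_j)$).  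Everything else is a routine unwinding of the definitions of $\bar d_\infty$, the trimmed star, and the index $j$.
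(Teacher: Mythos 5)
Your proof is correct and follows essentially the same route as the paper, which disposes of the lower bound with the single sentence ``the first inequality follows immediately from Lemma \ref{lem-starscale}'' and proves the upper bound exactly as you do, via $\tstar(v,X_{j-1})\subset\Star(v,X_{j-1})$ and Definition \ref{def-dbarinfty}. One caveat on your boundary-case fix: the limiting argument you sketch for the case $\bar d_\infty(x_1,x_2)=\frac{m-2}{m}m^{-j}$ presupposes a sequence $y_n\to x_2$ lying in the open ball $B(x_1,r)$, and such approximating points need not exist, since $X_\infty$ is not assumed to be a length space. The clean repair is to observe that Lemma \ref{lem-starscale} is proved from the infimum characterization of $\bar d_\infty$ (Lemma \ref{lem-altdefdbarinfty}): exactly as in its part (1), whose hypothesis is the non-strict inequality $\bar d_\infty(x,x')\le m^{-j}$, part (2) applies to any point $x'$ with $\bar d_\infty(x_1,x')\le r$ (i.e.\ to the closed ball), so no approximation is needed and the strict lower bound follows directly by contraposition.
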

\begin{proof}
The first inequality follows immediately from Lemma \ref{lem-starscale}.  By the choice of $j$, there
is a vertex $v\in V_{j-1}$ such that 
$$
\{\pi_{j-1}(x_1),\pi_{j-1}(x_2)\}\subset \tstar(v,X_{j-1})\subset\Star(v,X_{j-1})\,,
$$
 so
$
\bar d_\infty(x_1,x_2)\leq 2m^{-(j-1)}
$
   by Definition \ref{def-dbarinfty}.

\end{proof}

\subsection{A canonical map from the inverse limit to $\R$}
\label{subsec-canonical_map}
The next theorem contains  Theorem \ref{thm-easy_direction}.

\begin{theorem}
\label{thm-canonical_map_to_r}
Suppose $\{X_i\}$ is an admissible inverse system.
\begin{enumerate}
\setlength{\itemsep}{.5ex}
\item There is a compatible system
of direction preserving maps $\phi_i:X_i\ra \R$, such that for every $i$, the
restriction of $\phi_i$ to any edge $e\subset X_i$
 is a linear map onto a segment of length $m^{-i}$.  In particular, $\phi_i$ is
$1$-Lipschitz with respect to $d_i$.
 \item  
The system of maps $\{\phi_i:X_i\ra \R\}$ is unique up to  post-composition
with translation.
\item If $\phi:(X_\infty,\bar d_\infty)\ra \R$ is the map induced by $\{\phi_i\}$, then $\phi$ is $1$-Lipschitz,
and for every interval $I\subset\R$, the $\diam(I)$-components of 
$\phi^{-1}(I)$ have diameter at most $8m\cdot\diam(I)$.
\end{enumerate}
\end{theorem}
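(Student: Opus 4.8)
The plan is to construct the maps $\phi_i$ by a direct inductive/combinatorial argument, then to pass to the inverse limit and verify the diameter bound in part (3) by hand. For part (1), I would first build $\phi_i$ on the vertex set $V_i$ and then extend linearly over edges. Fix any base vertex and index $i$; since $\pi_i:X_{i+1}\to X_i'$ is simplicial, an isomorphism on each edge, and direction preserving, a choice of $\phi_i$ on $X_i$ determines a forced choice of $\phi_{i+1}$ on the vertices $V_i'\subset V_{i+1}$ of the subdivision (each original edge of $X_i$ of $\phi_i$-length $m^{-i}$ gets subdivided into $m$ subintervals of length $m^{-(i-1)}$, with the subdivision vertices mapped to the obvious intermediate values), and then $\phi_{i+1}$ is extended linearly over the remaining edges of $X_{i+1}$ so that each edge maps onto a segment of length $m^{-(i+1)}$ — here one uses that $\pi_i$ is direction preserving and an isomorphism on each edge to see this is consistent. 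To get the whole system one has to handle negative indices and the fact that the $X_i$ need not be connected: I would fix one connected component and propagate $\phi_i$ across components using condition (3) of Definition \ref{def-admissibleinversesystem}, which guarantees any two points eventually project into a common connected component of some $X_k$; that common ancestor component forces the relative normalization of $\phi$ on the components containing the two points. The direction-preserving property makes all these choices coherent, so one obtains a compatible system.

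For part (2), uniqueness up to translation: if $\phi_i,\psi_i$ are two such systems, then on each edge both are affine with the same slope magnitude $m^{-i}$ and the same orientation (direction preserving), so $\phi_i-\psi_i$ is locally constant; connectedness-via-ancestors (condition (3) again) forces it to be globally constant, and compatibility forces the same constant at every level.

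For part (3), the map $\phi:X_\infty\to\R$ induced by $\{\phi_i\}$ is $1$-Lipschitz because each $\phi_i$ is $1$-Lipschitz with respect to $d_i$, the projections $\pi_i^\infty$ are distance-nonincreasing by Lemma \ref{lem-starscale}-type estimates (more precisely because $\bar d_\infty$ dominates, up to the star condition, the pulled-back $d_i$), and $\phi=\phi_i\circ\pi_i^\infty$. The substantive point is the diameter bound on $\diam(I)$-components of $\phi^{-1}(I)$, and I expect this to be the main obstacle. The strategy: let $I\subset\R$ be an interval, put $r=\diam(I)$, and pick the integer $j$ with $m^{-j}\le r<m^{-(j+1)}$ roughly (i.e. $j\approx\log_m(1/r)$). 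Given two points $x,x'$ in the same $r$-component of $\phi^{-1}(I)$, chain them by points $x=z_0,\dots,z_n=x'$ in $\phi^{-1}(I)$ with consecutive $\bar d_\infty$-distances $\le r$. By Lemma \ref{lem-starscale}(1), each consecutive pair projects into the star of a vertex of $X_j$; since all $z_k$ lie in $\phi^{-1}(I)$ and $\phi=\phi_j\circ\pi_j^\infty$ with $\phi_j$ affine of slope $m^{-j}$ on edges, the projections $\pi_j^\infty(z_k)$ all lie in $\phi_j^{-1}(I)\subset X_j$, and consecutive ones lie in a common star; hence $\pi_j^\infty(x)$ and $\pi_j^\infty(x')$ lie in the same connected component of $\phi_j^{-1}(I')$ where $I'$ is $I$ enlarged by a bounded multiple of $m^{-j}$ (to absorb the stars). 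Now comes the key combinatorial estimate at a single finite level: in the graph $X_j$ with $\phi_j$ as above, a connected subset on which $\phi_j$ takes values in an interval of length $t$ has $d_j$-diameter at most $\mathrm{const}\cdot(t+m^{-j})$. This should follow because $\phi_j$ restricted to any edge has slope exactly $m^{-j}\ne 0$, so a path in $X_j$ of $d_j$-length $\ell$ has $\phi_j$-image of length at least $\dots$ — wait, that bounds the image below, not the diameter above; the correct mechanism is that since consecutive points are within stars, one builds an actual path in $X_j$ whose $d_j$-length is controlled by the number of ``monotone runs'' of $\phi_j$ along it, and each run has length comparable to the variation of $\phi_j$, which is $\le t$; the number of runs times $m^{-j}$ (a single edge) plus $t$ gives the bound. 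Finally lift back: $\bar d_\infty(x,x')\le 2m^{-(j-1)}$-type control from Lemma \ref{lem-trimmedstarscale} together with the level-$j$ diameter bound and $m^{-j}\le r$ yields $\bar d_\infty(x,x')\le 8m\cdot r$ after tracking the constants (the factor $8m$ coming from: enlargement of $I$ by $O(m^{-j})$, the $2m^{-(j-1)}=2m\cdot m^{-j}$ from the star-at-level-$(j-1)$ comparison, and $m^{-j}\le r$). I expect the bookkeeping of which level $j$ to use and how the star-enlargement of $I$ interacts with the passage between $\bar d_\infty$ and $d_j$ to be the delicate part; everything else is a routine affine computation on graphs.
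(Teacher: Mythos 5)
There are genuine gaps in both part (1) and part (3) of your proposal.

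For part (1): a direction-preserving map $X_i\to\R$ that is linear of length $m^{-i}$ on each edge is a \emph{potential} -- it requires $\phi_i(w)-\phi_i(v)=m^{-i}$ for every directed edge from $v$ to $w$ -- and propagating values from a base vertex is well defined only if, around every undirected cycle of $X_i$, the number of forward edges equals the number of backward edges. This is not a formal consequence of direction-preservation (the directed graph with edges $a\to b$, $a\to c$, $c\to b$ has no directed cycle yet admits no such potential), so ``the direction-preserving property makes all these choices coherent'' is precisely the assertion that needs proof; moreover the index set is all of $\Z$, so there is no bottom level from which to start an induction. The paper resolves both issues at once by passing to the direct limit $X_{-\infty}$: conditions (2) and (3) of Definition \ref{def-admissibleinversesystem} force the direct-limit graph to be either $\R$ with its standard subdivision or a bouquet of standard rays at a single vertex, and the $\phi_i$ are obtained by composing the projections $X_i\to X_{-\infty}$ with the evident map $X_{-\infty}\to\R$. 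This kills the cycle obstruction and fixes the normalization across components and across all levels simultaneously. Your part (2) is fine once part (1) is in place.

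For part (3): your ``key combinatorial estimate at a single finite level'' is false. A connected subset of $X_j$ on which $\phi_j$ varies by at most $t$ need not have $d_j$-diameter $\lesssim t+m^{-j}$: a long edge path whose directions alternate (a zigzag) has $\phi_j$-image of length $m^{-j}$ but arbitrarily large $d_j$-diameter, and such configurations are compatible with admissibility (they are collapsed by projection to lower levels). The number of ``monotone runs'' is likewise unbounded, so that fix fails; and even a bound on $d_j$-diameter would not control $\bar d_\infty$, which is not dominated by $d_j\circ\pi_j^\infty$ for general admissible systems. The paper's mechanism is different: choose $i$ with $\diam(I)<\tfrac14 m^{-i}$, so that $I$ lies in a single open star $\Star^o(v,Y_i)$ of the subdivision $Y_i$ of $\R$ with margin exceeding $\diam(I)$; the $\diam(I)$-chain then stays inside $\phi_i^{-1}(\Star^o(v,Y_i))$, and the decisive observation is that each path component of this preimage is contained in $\Star(\hat v,X_i)$ for a \emph{single} vertex $\hat v\in V_i$, because a path in the preimage cannot cross any vertex of $X_i$ mapping to a vertex of $Y_i$ other than $v$ -- this is exactly what rules out zigzags. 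The bound $2m^{-i}\le 8m\cdot\diam(I)$ then follows directly from Definition \ref{def-dbarinfty}. Finally, your justification of $1$-Lipschitzness is imprecise: $\pi_i^\infty$ is not $1$-Lipschitz from $\bar d_\infty$ to $d_i$; the correct argument uses Lemma \ref{lem-altdefdbarinfty}, each link of a chain contributing at most $2m^{-i_k}$ to $|\phi(x)-\phi(x')|$.
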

\begin{proof}

(1).  Let $X_{-\infty}$ denote the direct limit of the system $\{X_i\}$, i.e. 
$X_{-\infty}$ is the disjoint union $\sqcup_{i\in \Z}\;X_i$ modulo the 
equivalence relation that $X_i\ni x\sim x'\in X_j$  if and only 
if there is a $k\leq \min(i,j)$ such that $\pi_k^i(x)=\pi_k^j(x')$.
For every $i\in\Z$ there is a canonical projection map $\pi_{-\infty}^i:X_i\ra X_{-\infty}$.

If $k\in \Z$, then for all $i\leq k$ let $X_i^{(k-i)}$ denote
the $(k-i)$-fold iterated subdivision of $X_i$, as in Section \ref{subsec-subdivisions}.
Thus $\pi_i^j:X_j^{(k-j)}\ra X_i^{(k-i)}$ is simplicial for all $i\leq j\leq k$, and  restricts
to a direction-preserving isomorphism on each edge of $X_j^{(k-j)}$.  Therefore
the direct limit $X_{-\infty}$ inherits a directed graph structure, which we denote
$X_{-\infty}^{(k-\infty)}$, and for all $i\leq k$, the projection map
$\pi_{-\infty}^i:X_i^{(k-i)}\ra X_{-\infty}^{(k-\infty)}$ is simplicial, and 
a directed  isomorphism on each edge of $X_i^{(k-i)}$.    Condition
(3) of the definition of admissible systems implies that $X_{-\infty}^{(k-\infty)}$
is connected.

Note also that
for all $k \leq l$, the graph $X_{-\infty}^{(l-\infty)}$ is canonically
isomorphic to $(X_{-\infty}^{(k-\infty)})^{(l-k)}$.  In particular,
if  $v,v'$ are distinct vertices of $X_{-\infty}^{(k-\infty)}$,
then  their combinatorial
distance in $X_{-\infty}^{(l-\infty)}$  is at least $m^{l-k}$; morever  every 
vertex of $X_{-\infty}^{(l-\infty)}$
which is not a vertex of $X_{-\infty}^{(k-\infty)}$ must have valence $2$,
since it corresponds to an interior point of an edge of $X_{-\infty}^{(k-\infty)}$.   
It follows that $X_{-\infty}^{(k-\infty)}$
can contain at most one vertex $v$ which
has valence $\neq 2$.  Thus $X_{-\infty}^{(k-\infty)}$ is either
isomorphic to $\R$ with the standard subdivision, or to the union of
a single vertex $v$ with  a (possibly empty) collection of standard rays, each
of which 
is direction-preserving isomorphic to either $(-\infty,0]$ or $[0,\infty)$ with the
standard subdivision.    In either case, there is clearly a direction preserving
simplicial map  $X_{-\infty}^{(k-\infty)}\ra\R$ which is an isomorphism
on each edge of $X_{-\infty}^{(k-\infty)}$.  Precomposing this with the projection
maps $X_\infty\ra X_k\ra X_{-\infty}$ gives the desired maps $\phi_i$.  

(2).  Any such system $\{\phi_i:X_i\ra\R\}$ induces a map $\phi_{-\infty}:X_{-\infty}\ra\R$,
which for all $k\in \Z$ restricts to a direction preserving isomorphism on every edge
of $X_{-\infty}^{(k-\infty)}$.  From the description of $X_{-\infty}^{(k-\infty)}$, the map
$\phi_{-\infty}$ is unique up to post-composition with a translation.

(3).  If $x,x'\in X_\infty$ and $\{\pi_i(x),\pi_i(x')\}\subset \Star(v,X_i)$ for some $i\in \Z$,
$v\in V_i$, then by (1) 
$\{\phi(x),\phi(x')\}$ is contained in the union of two intervals
of length $m^{-i}$ in $\R$, and therefore
$d(\phi(x),\phi(x'))\leq 2m^{-i}$.  By the definition of $\bar d_\infty$, this 
implies that $d(\phi(x),\phi(x'))\leq \bar d_\infty(x,x')$ for all $x,x'\in X_\infty$, i.e.
$\phi$ is $1$-Lipschitz.

From the construction of the map $X_{-\infty}\ra \R$, there exists
a sequence $\{Y_i\}_{i\in\Z}$ of subdivisions of $\R$, such that $Y_{i+1}=Y_i^{(1)}$,
and $\phi_i:X_i\ra \R\simeq Y_i$ is simplicial and restricts to an isomorphism on 
every edge  of $X_i$.  

Now suppose $I\subset \R$ is an interval, and choose $i\in \Z$ such that
$\diam(I)\in[\frac{m^{-(i+1)}}{4},\frac{m^{-i}}{4})$.  
Then there is a vertex $v\in Y_i$ such that 
$I\subset \Star(v,Y_i)$ and $\dist(I,\R\setminus \Star(v,Y_i))>\diam(I)$.
Pick $x,x'\in \phi^{-1}(I)$ which lie in the same $\diam(I)$-component
of $\phi^{-1}(I)\subset X_\infty$, so there is a $\diam(I)$-path 
$x=x_0,\ldots,x_k=x'$ in $X_\infty$.   For each $\ell\in \{1,\ldots,k\}$ and every $\eps>0$,
there is a path $\ga_\ell$ in $X_i$ which joins $\pi_i(x_{\ell-1})$ to $\pi_i(x_\ell)$
such that $\length(\phi_i\circ\ga_\ell)\leq \diam(I)+\eps$.  When $\eps$ is sufficiently
small we get $\ga_\ell\subset \phi_i^{-1}(\Star^o(v,Y_i))$ 
because $\phi_i\circ \ga_\ell$ has  endpoints in $I$.
  Therefore $\pi_i(x),\pi_i(x')$
lie in the same path component of $\phi_i^{-1}(\Star^o(v,Y_i))$, which 
implies that they lie in $\Star(\hat v,X_i)$ for some vertex $v\in V_i$.
Hence $\bar d_\infty(x,x')\leq 2m^{-i}\leq 8m\cdot\diam(I)$.

\end{proof}

\bigskip\bigskip
\subsection{Directed paths, a partial ordering, and monotone paths}
\label{subsec-directedpaths}

Suppose $\{X_i\}$ is an admissible system, and $\{\phi_i:X_i\ra \R\}$ is a system of maps
as in Theorem \ref{thm-canonical_map_to_r}.   

\begin{definition}
A {\bf directed path} in $X_i$ is  a path
$\ga:I\ra X_j$ which is locally injective, and direction preserving (w.r.t. the usual direction on $I$).
A {\bf directed path} in $X_\infty$ is  a path $\ga:I\ra X_\infty$ such that $\pi_i\circ\ga:I\ra X_i$
is directed for all $i\in \Z$.
\end{definition}

If $\ga:I\ra X_j$ is a directed  path in $X_j$, then $\phi_j\circ\ga$ is a directed  path in $\R$, 
and hence it is embedded,  and has the same length as $\ga$.    
Therefore  $X_\infty$ and the $X_j$'s do not contain directed loops.
Furthermore, it follows that $\pi_i^j\circ\ga$ is a $d_i$-geodesic   in $X_i$ for all $i\leq j$.

\begin{definition}[Partial order]
We define a binary relation  on $X_i$, for $i\in \Z\cup\{\infty\}$
by declaring that $x\preceq y$ if there is a (possibly trivial) directed
path from  $x$ to $y$.   This defines a partial
order on $X_i$ since $X_i$ contains no directed loops.    As usual, $x\prec y$ means that $x\preceq y$
and $x\neq y$.  
\end{definition}

Since the projections $\pi_i^j:X_j\ra X_i$ are direction-preserving, they are order preserving for all
$i\leq j$, as is the projection map $\pi_i^\infty:X_\infty\ra X_i$.

\begin{lemma}
Suppose $\ga:I\ra X_\infty$ is a continuous map.
The following are equivalent:
\begin{enumerate}
\setlength{\itemsep}{.5ex}
\item $\ga$ is a directed geodesic, i.e. $\length(\ga)=d(\ga(0),\ga(1))$.
\item $\ga$ is a directed path.
\item $\pi_i\circ \ga$ is a directed path for all $i$.
\item $\phi\circ\ga:[0,1]\ra \R$ is a directed path.
\end{enumerate}
\end{lemma}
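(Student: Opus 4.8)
The plan is to prove the chain of equivalences $(1)\Leftrightarrow(2)\Leftrightarrow(3)\Leftrightarrow(4)$ by establishing $(1)\Rightarrow(2)\Rightarrow(3)\Rightarrow(4)\Rightarrow(1)$, or rather a convenient cycle, using the $1$-Lipschitz maps $\phi$, $\phi_i$ and the structure of $\bar d_\infty$ developed in Section \ref{sec-notation_preliminaries}. The implications $(2)\Rightarrow(3)$ and $(3)\Rightarrow(4)$ are essentially definitional: a directed path in $X_\infty$ is \emph{by definition} one whose projections $\pi_i\circ\ga$ are directed in each $X_i$, giving $(2)\Rightarrow(3)$; and if each $\pi_i\circ\ga$ is directed, then (as already observed in Section \ref{subsec-directedpaths}) $\phi_i\circ(\pi_i\circ\ga)=\phi\circ\ga$ is a directed path in $\R$, which is $(3)\Rightarrow(4)$. (Local injectivity of $\phi\circ\ga$ follows since a directed path in $\R$ is embedded.)

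For $(4)\Rightarrow(1)$, suppose $\phi\circ\ga$ is a directed path in $\R$, hence an embedding with $\length(\phi\circ\ga)=|\phi(\ga(1))-\phi(\ga(0))|$. Since $\phi$ is $1$-Lipschitz, we have $d(\ga(0),\ga(1))\geq|\phi(\ga(1))-\phi(\ga(0))|=\length(\phi\circ\ga)$. So it suffices to show $\length(\ga)\leq\length(\phi\circ\ga)$, i.e. that $\phi$ does not decrease the length of $\ga$; combined with the trivial bound $d(\ga(0),\ga(1))\leq\length(\ga)$ this forces $\length(\ga)=d(\ga(0),\ga(1))$. To see this, partition $I$ finely; on each small subinterval $[s,t]$ with $\ga|_{[s,t]}$ of small diameter, Lemma \ref{lem-starscale}(1) puts $\pi_i(\ga(s)),\pi_i(\ga(t))$ in a common star $\Star(v,X_i)$ for a suitable $i$, and since $\phi\circ\ga$ is monotone the image $\phi_i(\{\pi_i(\ga(s)),\ldots\})$ traverses the two edges of that star with the correct orientation; tracking the total $\phi$-displacement over the partition and passing to the limit shows the $\phi$-image has length $\geq\sup$ over partitions of $\sum\bar d_\infty(\ga(s_k),\ga(s_{k+1}))=\length(\ga)$. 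A cleaner alternative: show directly that monotonicity of $\phi\circ\ga$ forces each $\pi_i\circ\ga$ to be locally injective and direction preserving (if $\pi_i\circ\ga$ backtracked or were direction-reversing on some subinterval, then $\phi_i\circ\pi_i\circ\ga=\phi\circ\ga$ would fail to be monotone there, since $\phi_i$ is a direction-preserving isomorphism on each edge of $X_i$), which is exactly $(4)\Rightarrow(3)$; so really the core content is the two-way equivalence $(3)\Leftrightarrow(4)$ plus $(3)\Rightarrow(2)$ (immediate from the definition) plus $(2)\Leftrightarrow(1)$.

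The implication $(2)\Rightarrow(1)$ — a directed path in $X_\infty$ is a geodesic — is the substantive one, and it is flagged earlier in the paper ("It is not difficult to see that such a path $\ga$ is a geodesic in $X_\infty$; see Section \ref{sec-notation_preliminaries}"). The argument: if $\ga$ is directed then $\phi\circ\ga$ is directed in $\R$, so $\length(\phi\circ\ga)=|\phi(\ga(1))-\phi(\ga(0))|\leq\bar d_\infty(\ga(0),\ga(1))\leq\length_{\bar d_\infty}(\ga)$. It remains to see $\length_{\bar d_\infty}(\ga)\leq\length(\phi\circ\ga)$. For this one uses that along a directed path, consecutive points at $\bar d_\infty$-scale $\approx m^{-i}$ that lie in a common star of $X_i$ have $\phi$-images separated by a definite amount: more precisely, via Lemma \ref{lem-trimmedstarscale}, if $\ga(s),\ga(t)$ are distinct and $j$ is minimal with $\{\pi_j\ga(s),\pi_j\ga(t)\}$ not in any trimmed star, then because $\pi_j\circ\ga$ is a \emph{directed} (hence injective, length-additive) path in $X_j$, the $d_j$-distance — and therefore $|\phi(\ga(s))-\phi(\ga(t))|$, since $\phi_j$ is an isometry on edges along a directed path — is comparable to $m^{-j}\asymp\bar d_\infty(\ga(s),\ga(t))$. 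Summing over a partition and taking the supremum gives $\length(\phi\circ\ga)\geq c\cdot\length_{\bar d_\infty}(\ga)$ with $c$ independent of the partition; a more careful accounting (using that along a directed subpath the $\phi$-displacements add exactly, with no cancellation) upgrades $c$ to $1$. The main obstacle is precisely this last point: ruling out "hidden length" in $\bar d_\infty$ that is invisible to $\phi$, i.e. showing the constant is exactly $1$ rather than merely bounded below. I expect to handle it by proving that for a directed path, $\bar d_\infty(\ga(s),\ga(t))=|\phi(\ga(s))-\phi(\ga(t))|$ \emph{exactly} — one inequality is $1$-Lipschitzness of $\phi$; for the reverse, exhibit the single-term sequence realizing the infimum in Lemma \ref{lem-altdefdbarinfty} by choosing, for $s,t$ with $|\phi(\ga(s))-\phi(\ga(t))|$ small, the index $i$ with $m^{-i}\approx|\phi(\ga(s))-\phi(\ga(t))|$ and noting that directedness of $\pi_i\circ\ga$ keeps the whole subarc $\pi_i(\ga([s,t]))$ inside one star of $X_i$ — then concatenate along a partition and pass to the limit.
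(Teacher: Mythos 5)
Your architecture matches the paper's: the implications among (2), (3), (4) are handled exactly as in the text (the definition of a directed path in $X_\infty$, plus the fact that $\phi_i$ restricts to a direction-preserving isomorphism on each edge), and you correctly isolate the substantive content as the identity $\bar d_\infty(\ga(s),\ga(t))=|\phi(\ga(s))-\phi(\ga(t))|$ for a directed path, from which geodesy follows by monotonicity of $\phi\circ\ga$. The gap is in your proposed proof of that identity. A single-term chain at level $i$ costs exactly $2m^{-i}$ by Lemma \ref{lem-altdefdbarinfty}, no matter where in the star the two points sit; and for the subarc $\pi_i(\ga([s,t]))$ to be \emph{guaranteed} to lie in one closed star you need its $d_i$-length $|\phi(\ga(s))-\phi(\ga(t))|$ to be strictly less than $m^{-i}$ (a directed path of length $\geq m^{-i}$ can fully traverse one edge and spill into edges on both sides, meeting three edges with no common vertex, hence no single star contains it). So each hop costs at least twice --- and, after rounding $i$ to an integer, up to $2m$ times --- the $\phi$-displacement it covers. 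Concatenating over a partition and refining does not remove this factor, since it is scale-invariant: you obtain only $\bar d_\infty(\ga(s),\ga(t))\leq 2m\,|\phi(\ga(s))-\phi(\ga(t))|$, hence that $\ga$ is a quasi-geodesic, not a geodesic. The same defect affects your first sketch of $(4)\Rightarrow(1)$.

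The paper's remedy is to fix one large scale $i$ for the whole subarc and align the chain with the vertices of $X_i$, rather than matching the scale to each piece of an arbitrary partition. The projection $\pi_i\circ\ga|_{[s,t]}$ is a directed edge path traversing $N<2+m^{i}|\phi(\ga(s))-\phi(\ga(t))|$ edges of $X_i$; choose chain points on $\ga([s,t])$ lying over every other vertex $v_0,v_2,v_4,\dots$ of this edge path. Consecutive chain points then project into $\Star(v_{2k+1},X_i)$, and each hop of cost $2m^{-i}$ now advances $\phi$ by the full $2m^{-i}$ (two whole edges). The total cost is $2m^{-i}\lceil N/2\rceil\leq|\phi(\ga(s))-\phi(\ga(t))|+3m^{-i}$, and letting $i\to\infty$ gives the exact identity (the reverse inequality being $1$-Lipschitzness of $\phi$). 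With that substitution for your ``single-term'' step, the rest of your argument closes.
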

\begin{proof}
(1)$\implies$(2)$\implies$(3)$\implies$(4) is clear.

(4)$\implies$(3) follows from the fact that $\phi_i:X_i\ra\R$ restricts to a direction preserving isomorphism
on every edge of $X_i$.

(3)$\implies$(1).  For all $i\in \Z$, let $\ga_i\subset X_i$ be the union of the edges whose
interiors intersect the image of $\pi_i\circ\ga$.  Then $\ga_i$ is a directed edge path in $X_i$, and hence
$\phi_i\circ\ga_i$ is a directed edge path in $\R$ with the same number of edges.  
Therefore $\ga_i$ has
$<2+m^i(d(\phi(\ga(0)),\phi(\ga(1))))$ edges.   Since the vertices of $\ga_i$ belong to the image
of $\pi_i:X_\infty\ra X_i$, by the definition of $\bar d_\infty$, we have 
$\bar d_\infty(\ga(0),\ga(1))\leq 3m^{-i}+d(\phi(\ga(0)),\phi(\ga(1)))$.  Since $i$ is arbitrary we
get $\bar d_\infty(\ga(0),\ga(1))\leq d(\phi(\ga(0)),\phi(\ga(1)))$, and 
Theorem \ref{thm-canonical_map_to_r}(3) gives equality.  This holds for all subpaths of $\ga$
as well,  so $\ga$ is a geodesic.
\end{proof}

\begin{definition}
\label{def-monotone_geodesic}
A {\bf monotone geodesic segment in $X_i$} is the image of a directed isometric
embedding $\ga:[a,b]\ra X_i$; a {\bf monotone geodesic} in $X_i$ is the image of a 
directed isometric embedding $\R\ra (X_i,d_i)$.
A {\bf monotone geodesic
segment in $X_\infty$} is (the image of) 
a path $\ga:I\ra X_\infty$ satisfying any of the conditions of the lemma.
A {\bf monotone geodesic} is (the image of) a directed isometric embedding
$\R\ra X_\infty$, or equivalently, a geodesic $\ga\subset X_\infty$ which projects isometrically
under $\phi:X_\infty\ra\R$ onto $\R$.
\end{definition}

Monotone geodesics lead to monotone sets:
\begin{definition}
\label{def-monotone_set}
A subset $E\subset X_i$, $i\in \Z\cup\{\infty\}$, is {\bf monotone} if the characteristic
function $\chi_E$ restricts to a monotone function on any monotone geodesic
$\ga\subset X_i$ (i.e. $\ga\cap E$ and $\ga\setminus E$ are both connected subsets of $\ga$).
\end{definition}

\section{Inverse systems of graphs with path metrics}
\label{sec-inverse_systems_path_metrics}

For some admissible inverse systems, such as Examples \ref{ex-laaksodiamond}, \ref{ex-laaksoahlforsregularpi},
the path metrics $d_i$  induce a length structure on the inverse limit which is comparable
to $\bar d_\infty$.  We discuss
this special class here, comparing the length metric with the metric $\bar d_\infty$ defined
earlier.

In this section we assume that $\{X_i\}$ is an admissible inverse system satisfying two
additional conditions:
\begin{enumerate}[(a)]
\setlength{\itemsep}{.5ex}
\item $\pi_i:X_{i+1}\ra X_i'$ is an open map for all $i\in\Z$.
\item There is a $\th\in \N$ such that for every $i\in \Z$, $v\in V_i$, 
 $w,w'\in \pi_i^{-1}(v)$, the is  an edge path in $X_{i+1}$ 
 with at most $\th$ edges, which joins
$w$ and $w'$.
\end{enumerate}

Both conditions obviously hold in 
Examples \ref{ex-laaksodiamond}, \ref{ex-laaksoahlforsregularpi}.

\begin{lemma}[Path lifting]
\label{lem-pathlifting}

Suppose $0\leq i<j$,  $c:[a,b]\ra X_i$ is a path,  and $v\in (\pi_i^j)^{-1}(c(a))$.   Then 
there is a path  $\hat c:[a,b]\ra X_j$ such that
\begin{itemize}
\item $\hat c$ is a lift of  $c$:\quad $\pi_i^j\circ\hat c=c$.
\item 
$\hat c(a)=v$.
\end{itemize}
\end{lemma}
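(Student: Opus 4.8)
The plan is to reduce to the case $j = i+1$ by induction and then lift across a single level using the open, simplicial structure of $\pi_i : X_{i+1} \to X_i'$. For the induction step, suppose the lemma holds whenever the index gap is one. Given $i < j$ and a lift $v \in (\pi_i^j)^{-1}(c(a))$, first lift $c$ across $\pi_{j-1} : X_j \to X_{j-1}'$ starting from $v$ — but this requires a starting point in $X_{j-1}$ mapping to $c(a)$ under $\pi_i^{j-1}$, which is $\pi_{j-1}^j(v)$. So I would instead argue in the other order: inductively produce a lift $\tilde c : [a,b] \to X_{j-1}$ of $c$ with $\tilde c(a) = \pi_{j-1}(v)$, then lift $\tilde c$ one more level to $\hat c : [a,b] \to X_j$ with $\hat c(a) = v$. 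Thus everything comes down to the single-level statement: given a path $\tilde c : [a,b] \to X_{i}$ (played by $X_{j-1}$ here, but I'll rename) and a point $v \in \pi_i^{-1}(\tilde c(a)) \subset X_{i+1}$, there is a lift $\hat c$ of $\tilde c$ through $\pi_i : X_{i+1} \to X_i'$ with $\hat c(a) = v$. Note $\tilde c$ should be viewed as a path in $X_i'$ (same underlying space, finer subdivision), so without loss of generality $\tilde c$ is a path in the subdivided graph $X_i'$.

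For the single-level lift, the key point is that $\pi_i : X_{i+1} \to X_i'$ is simplicial, an isomorphism on every edge, and (by hypothesis (a)) an open map. I would use a standard continuation/connectedness argument on the parameter interval. Let $T \subset [a,b]$ be the set of $t$ such that $\tilde c|_{[a,t]}$ admits a lift starting at $v$; then $a \in T$, and I claim $T$ is open and closed in $[a,b]$, hence all of $[a,b]$. For openness: if $\hat c$ is defined on $[a,t]$, look at $\hat c(t) \in X_{i+1}$. If $\hat c(t)$ lies in the interior of an edge $e$ of $X_{i+1}$, then $\pi_i$ restricted to $e$ is an isomorphism onto an edge of $X_i'$, so the lift extends uniquely for a short time. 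If $\hat c(t)$ is a vertex $w$, then since $\pi_i$ is open, $\pi_i(\Star^o(w, X_{i+1}))$ is an open neighborhood of $\tilde c(t)$ in $X_i'$, and because $\pi_i$ is simplicial and an isomorphism on edges, every edge of $X_i'$ emanating from $\tilde c(t)$ that meets this neighborhood is the image of an edge of $X_{i+1}$ at $w$; hence the path $\tilde c$, which for $s$ slightly larger than $t$ runs along one such edge, lifts. For closedness of $T$: if $t = \sup T$ and $t \notin T$, take $t_n \uparrow t$ in $T$; the partial lifts are compatible on $[a, t_n)$ and assemble to a lift on $[a,t)$; then I must show it extends continuously to $t$. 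Here I would use that $\pi_i$ is a local isometry on each edge (hence, with hypothesis (b) or just local finiteness near the relevant finite subgraph, a covering-like map away from finitely many vertices) together with the fact that $\tilde c$ on a neighborhood of $t$ passes through only finitely many edges of $X_i'$, so the lift is eventually confined to finitely many edges of $X_{i+1}$, forcing convergence.

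The main obstacle I anticipate is the absence of local finiteness: the $X_i$'s may have infinite valence, so $\pi_i : X_{i+1} \to X_i'$ is not literally a covering map and one cannot quote path-lifting for covering spaces. The openness hypothesis (a) is exactly what substitutes for evenly-covered neighborhoods at vertices, so the delicate part is making the vertex-crossing step in the openness argument precise: one must check that an open neighborhood of a vertex $\tilde c(t) \in X_i'$ pulled back under an open simplicial edge-isomorphism, restricted to the single edge along which $\tilde c$ proceeds, gives a genuine lift of that initial segment. I would handle this by working on the finite subgraph swept out by $\tilde c$ (a compact path in a graph meets only finitely many open cells, so only finitely many edges), on which $\pi_i$ does behave like a finite-valence open map, reducing to the classical picture. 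Uniqueness of the lift, which is what lets the partial lifts be glued along $[a,t_n)$, follows because $\pi_i$ is injective on each edge; I would state this as a small preliminary observation. No metric estimate is needed for the statement as written, though one records for later use that $\hat c$ has the same length as $c$ since $\pi_i^j$ is a simplicial edge-isometry.
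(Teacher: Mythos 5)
Your overall strategy --- reduce to a single level, then run an open/closed continuation argument in the parameter interval --- is a legitimate alternative to the paper's route (the paper treats piecewise linear paths by induction on the number of linear pieces, using openness, and gets the general case ``by approximation''), but two of the claims you lean on are false in this setting, and both sit at the crux of your argument. First, lifts are \emph{not} unique: $\pi_i$ being injective on each edge does not make it locally injective at a vertex $w\in X_{i+1}$, because two distinct edges at $w$ may have the same image edge in $X_i'$. This happens at every branch vertex in Example \ref{ex-laaksodiamond}, where the two parallel strands of the diamond are collapsed onto the same subdivided edge. Consequently the partial lifts over $[a,t_n]$ need not be pairwise compatible, and the gluing step in your closedness argument has no justification. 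Second, a continuous path need not ``run along one edge'' immediately after passing through a vertex, and a compact path need not meet only finitely many edges: the $X_i$ are allowed infinite valence (so the metric topology is not the CW topology and your ``finitely many open cells'' fact fails), and even in the locally finite case a path can oscillate among several edges at $\tilde c(t)$ infinitely often in every one-sided neighborhood of $t$. So both the vertex-crossing step of openness and the ``eventually confined to finitely many edges'' step of closedness are unproved.

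Both gaps close with one observation that replaces edge-by-edge lifting by lifting of a whole neighborhood. If $u$ is a vertex of $X_i'$ and $w\in\pi_i^{-1}(u)$, then openness of $\pi_i$ applied to $\Star^o(w,X_{i+1})$ shows (since $\pi_i$ is simplicial and an isomorphism on edges) that \emph{every} edge of $X_i'$ at $u$ is the image of some edge of $X_{i+1}$ at $w$; choosing one such edge over each edge downstairs yields a continuous section $\sigma:B(u,\eps)\ra B(w,\eps)$ of $\pi_i$ with $\sigma(u)=w$, for $\eps$ less than the edge length $m^{-(i+1)}$, and $\sigma$ can be arranged to pass through any prescribed point of $\pi_i^{-1}(B(u,\eps))$ lying in $B(w,\eps)$. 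Composing $\sigma$ with $\tilde c$ extends a partial lift across an entire parameter interval on which $\tilde c$ stays in $B(u,\eps)$, however $\tilde c$ oscillates. Applying this at $t\in T$ gives openness; applying it to a single partial lift $\hat c_n$ with $t_n$ close to $\sup T$ (noting that $\pi_i^{-1}(B(u,\eps))$ is the disjoint union of the balls $B(w',\eps)$, $w'\in\pi_i^{-1}(u)$, so $\hat c_n(t_n)$ selects the relevant $w$) gives closedness directly, with no gluing and no appeal to uniqueness. With that repair your argument is correct, and it handles arbitrary continuous paths without the approximation step the paper invokes.
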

\begin{proof}
If $c$ is piecewise linear, and linear on each subinterval of the partition
$a=t_0<\ldots <t_k=b$,  then the existence of $\hat c$ follows by induction on $k$,
because of condition (a) above.  The general
case follows by approximation.
\end{proof}

\begin{lemma}

\mbox{}

\begin{enumerate}
\setlength{\itemsep}{.5ex}
\item $X_i$ is connected for all $i$.
\item $\pi_i^j:X_j\ra X_i$ and $\pi_i^\infty:X_\infty\ra X_i$ are surjective for all $i\leq j$.
\end{enumerate}

\end{lemma}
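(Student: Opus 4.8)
The plan is to prove (1) first, since it is used in (2), and the engine for (1) is the path‑lifting Lemma~\ref{lem-pathlifting} together with condition~(b). I would package the combinatorial heart of the argument into the following intermediate assertion: \textbf{for all $i\le j$ and every $p\in X_i$, any two points of $(\pi_i^j)^{-1}(p)$ can be joined by a path in $X_j$.} I would prove this by induction on $j-i$. Base case $j=i+1$: if $p$ is a vertex of $X_i$, condition~(b) supplies an edge path in $X_{i+1}$ between the two preimages. If $p$ is not a vertex of $X_i$, then $p$ lies in the interior of a unique edge $e$ of $X_i$; choosing an endpoint $\bar p\in V_i$ of $e$ and the segment $\sigma\subset e$ from $p$ to $\bar p$, I lift $\sigma$ via Lemma~\ref{lem-pathlifting} starting from each of the two preimages of $p$, getting two paths in $X_{i+1}$ terminating in $\pi_i^{-1}(\bar p)$; condition~(b) joins those terminal points, and concatenation (with a reversal) yields the desired path. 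For the inductive step I factor $\pi_i^j=\pi_i^{j-1}\circ\pi_{j-1}$: the two preimages of $p$ project into a common fiber of $\pi_i^{j-1}$ over $p$, so by the inductive hypothesis they are joined by a path in $X_{j-1}$; lift this path to $X_j$ via Lemma~\ref{lem-pathlifting} starting from one of the two original preimages, and apply the base case for $\pi_{j-1}$ to connect the lifted endpoint to the other preimage.

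Granting the intermediate assertion, (1) follows at once: given $x,x'\in X_i$, condition~(3) of Definition~\ref{def-admissibleinversesystem} supplies $k\le i$ with $\pi_k^i(x)$ and $\pi_k^i(x')$ in a common connected component of $X_k$; choosing a path $c$ in $X_k$ between them, lifting it by Lemma~\ref{lem-pathlifting} to a path in $X_i$ starting at $x$, and then joining its endpoint — which lies in $(\pi_k^i)^{-1}(\pi_k^i(x'))$, the same fiber as $x'$ — to $x'$ by the intermediate assertion, produces a path in $X_i$ from $x$ to $x'$. Hence $X_i$ is path‑connected, in particular connected.

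For (2) I would first show that each $\pi_i\colon X_{i+1}\ra X_i'$ is onto. Since $\pi_i$ is simplicial and an isomorphism on each edge, its image is a subcomplex of $X_i'$, hence closed; by condition~(a) the image is open; and it is nonempty because $X_{i+1}\ne\emptyset$. As $X_i'$ is homeomorphic to $X_i$, which is connected by (1), the image must be all of $X_i'$. Composing the $\pi_\ell$ gives surjectivity of $\pi_i^j$ for all $i\le j$. For $\pi_i^\infty$, fix $p\in X_i$ and consider the tower $F_{i}\leftarrow F_{i+1}\leftarrow\cdots$ with $F_j=(\pi_i^j)^{-1}(p)$ and bonding maps the restrictions of the $\pi_j$; each $F_j$ is nonempty by the surjectivity just proved, and each bonding map $F_{j+1}\ra F_j$ is surjective (given $y\in F_j$, surjectivity of $\pi_j$ gives $z\in X_{j+1}$ with $\pi_j(z)=y$, whence $\pi_i^{j+1}(z)=\pi_i^j(y)=p$ automatically). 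The inverse limit of a countable tower of nonempty sets with surjective bonding maps is nonempty, so there is a thread over $p$; adjoining the forced coordinates $\pi_j^i(p)$ for $j<i$ produces a point of $X_\infty$ mapping to $p$.

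I expect the main obstacle to be the intermediate assertion on fibers — precisely the fact that condition~(b) controls only the fibers $\pi_i^{-1}(v)$ over the vertices $v\in V_i$, and not over subdivision vertices of $X_i'$ or interior points of edges, which is exactly what forces the ``slide the basepoint to a vertex of $X_i$ by lifting a short segment'' device, followed by the bookkeeping needed to bootstrap from single‑step fibers to multi‑step fibers. A minor point to check is that Lemma~\ref{lem-pathlifting} is available throughout the index range used here; its proof invokes only condition~(a), which is assumed for all $i\in\Z$.
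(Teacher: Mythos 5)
Your proposal is correct and follows essentially the same route as the paper: path lifting (Lemma \ref{lem-pathlifting}) plus condition (b) to connect fibers and deduce connectedness from Definition \ref{def-admissibleinversesystem}(3), then the open-plus-closed-image argument for surjectivity of $\pi_i$ and a standard inverse-limit-of-nonempty-fibers argument for $\pi_i^\infty$. You simply spell out details the paper leaves implicit (sliding basepoints to vertices of $X_i$ so that (b) applies, and the closedness of the image as a subcomplex).
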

\begin{proof}
(1).  By Lemma \ref{lem-pathlifting} and condition (b), if $x_1,x_2\in X_i$ lie in the same
connected component of $X_i$, then any $x_1'\in \pi_i^{-1}(x_1)$, $x_2'\in \pi_i^{-1}(x_2)$
lie in the same connected component of $X_{i+1}$.  Iterating this, we get that
$(\pi_i^j)^{-1}(x_1)\cup(\pi_i^j)^{-1}(x_2)\subset X_j$ is contained in a single
component of $X_j$.   Now for every $j\in \Z$, $\hat x_1,\hat x_2\in X_j$, by 
Definition \ref{def-admissibleinversesystem} (3) there is an $i\leq j$ such that
$x_1=\pi_i(\hat x_1)$, $x_2=\pi_i(\hat x_2)$ lie in the same connected component
of $X_i$; therefore $\hat x_1,\hat x_2$ lie in the same component of $X_j$. 

(2).  $\pi_i:X_{i+1}\ra X_i$ is open by condition (a), $X_i$ is connected by (1), and 
$X_{i+1}$ is nonempty by Definition \ref{def-admissibleinversesystem} (1).  Therefore
$\pi_i:X_{i+1}\ra X_i$ is surjective.  It follows that $\pi_i^\infty:X_\infty\ra X_i$
is surjective as well.
\end{proof}

\bigskip

Note that $\pi_i:(X_{i+1},d_{i+1})\ra (X_i,d_i)$ is a $1$-Lipschitz map by Definition 
\ref{def-admissibleinversesystem}(2).

\begin{lemma}
\label{lem-fiberdiameter}

\mbox{}
\begin{enumerate}\setlength{\itemsep}{.5ex}
\item
For all $i\in\Z$, and every $x_1,x_2\in X_i$, $x_1',x_2'\in X_{i+1}$ with $\pi_i(x_k')=x_k$,
we  have
$$
d_{i+1}(x_1',x_2')\leq d_i(x_1,x_2)+2m^{-i}+\th\cdot m^{-(i+1)}\,.
$$
\item
If $i< j$, then for every $x_1,x_2\in X_i$, $x_1',x_2'\in X_j$ with $\pi_i(x_k')=x_k$,
we have 
\begin{equation}
\label{eqn-separationbound}
d_j(x_1',x_2')\leq d_i(x_1,x_2)+(2m+\th)\cdot\left(\frac{m^{-i}-m^{-j}}{m-1}  \right)\,.
\end{equation}
\end{enumerate}
\end{lemma}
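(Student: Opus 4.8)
The plan is to prove (1) first and then obtain (2) by a straightforward telescoping summation. For (1): given $x_1', x_2' \in X_{i+1}$ with $\pi_i(x_k') = x_k$, I want to build a path in $X_{i+1}$ from $x_1'$ to $x_2'$ whose $d_{i+1}$-length is controlled. First take a $d_i$-geodesic $c : [0, d_i(x_1,x_2)] \to X_i$ from $x_1$ to $x_2$; if $x_1, x_2$ lie in different components then $d_i(x_1,x_2) = \infty$ and there is nothing to prove, so assume they are in the same component. Apply the Path Lifting Lemma (Lemma~\ref{lem-pathlifting}) to lift $c$ to a path $\hat c : [0, d_i(x_1,x_2)] \to X_{i+1}$ with $\hat c(0) = x_1'$. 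Since $\pi_i$ is $1$-Lipschitz and edge-isometric, $\hat c$ has $d_{i+1}$-length equal to $\length(c) = d_i(x_1,x_2)$. The endpoint $\hat c(1)$ satisfies $\pi_i(\hat c(1)) = x_2 = \pi_i(x_2')$, so $\hat c(1)$ and $x_2'$ both lie in the fiber $\pi_i^{-1}(x_2)$.

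The next step is to bound the $d_{i+1}$-distance between two points of a single fiber $\pi_i^{-1}(x_2)$. If $x_2$ is a vertex of $X_i$, condition (b) gives an edge path in $X_{i+1}$ with at most $\th$ edges joining them, hence $d_{i+1}(\hat c(1), x_2') \leq \th \cdot m^{-(i+1)}$, and we are done with the better bound $d_i(x_1,x_2) + \th m^{-(i+1)}$. In general $x_2$ lies in the interior of some edge $e$ of $X_i$; then move $\hat c(1)$ and $x_2'$ along the (unique, since $\pi_i$ is an isomorphism on each edge of $X_{i+1}$) lifts of $e$ to reach the preimages of an endpoint $v$ of $e$ — this costs at most $m^{-i}$ on each side — and then apply condition (b) at the vertex $v$. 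This yields
\[
d_{i+1}(\hat c(1), x_2') \leq 2 m^{-i} + \th \cdot m^{-(i+1)}\,,
\]
and concatenating with $\hat c$ gives (1) by the triangle inequality.

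For (2), iterate (1) along $i, i+1, \ldots, j-1$: choose intermediate lifts $x_k^{(\ell)} \in X_\ell$ of $x_k$ for $i \leq \ell \leq j$ with $x_k^{(i)} = x_k$, $x_k^{(j)} = x_k'$, compatible under the projections. Then
\[
d_j(x_1', x_2') \leq d_i(x_1,x_2) + \sum_{\ell = i}^{j-1} \bigl( 2 m^{-\ell} + \th\, m^{-(\ell+1)} \bigr)\,.
\]
The sum $\sum_{\ell=i}^{j-1} 2 m^{-\ell} = 2 \cdot \frac{m^{-i} - m^{-j}}{1 - m^{-1}} = \frac{2m}{m-1}(m^{-i} - m^{-j})$, and similarly $\sum_{\ell=i}^{j-1} \th\, m^{-(\ell+1)} = \frac{\th}{m-1}(m^{-i} - m^{-j})$; adding these produces exactly $(2m + \th) \cdot \frac{m^{-i} - m^{-j}}{m-1}$, which is (\ref{eqn-separationbound}).

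The only genuinely delicate point is the fiber-distance estimate in the case where $x_2$ is an interior edge point: one must verify that the lift $\hat c$ actually produced by Lemma~\ref{lem-pathlifting} is such that its endpoint can be slid to a vertex fiber at cost $\leq m^{-i}$, and likewise for $x_2'$. This follows because $\pi_i : X_{i+1} \to X_i'$ restricts to an isomorphism on each edge of $X_{i+1}$, so each point of $\pi_i^{-1}(x_2)$ lies on a unique edge of $X_{i+1}$ whose $\pi_i$-image is (a subedge of) $e$, and travelling along that edge to the fiber over the nearer endpoint of $e$ has length at most the full edge length $m^{-i}$; combining the two contributions from $\hat c(1)$ and $x_2'$ (which may slide to fibers over different endpoints of $e$, hence the factor $2$, absorbing also the traversal of $e$ itself at scale $m^{-i}$) gives the stated $2m^{-i}$. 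Everything else is bookkeeping with geometric series.
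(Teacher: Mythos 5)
Your proof is correct and follows essentially the same route as the paper's: the paper simply extends the path from $x_2$ onward to a nearby vertex $v_2\in V_i$ \emph{before} lifting, which packages your edge-sliding step into the lift itself, and then applies condition (b) in the fiber over $v_2$; part (2) is the same telescoping geometric series. (One cosmetic point: the lift of $e$ through a given point of $X_{i+1}$ need not be unique, since several edges of $X_{i+1}$ at a vertex may project to the same edge of $X_i'$ --- but only existence, via Lemma \ref{lem-pathlifting}, is needed for your argument.)
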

\begin{proof}
(1).  Let $\ga:I\ra X_i$
be a path of length at most $d_i(x_1,x_2)+m^{-i}$ which joins $x_1$ to $x_2$,  and then 
continues to some vertex $v_2\in V_i$.  By Lemma \ref{lem-pathlifting} there is a lift 
$\ga':I\ra X_i$ starting at $x_1'$, and clearly $\length(\ga')=\length(\ga)$.
Since $x_2'$ has distance $<m^{-i}$ from $\pi_i^{-1}(v_2)$, (1) follows
from condition (b).

(2).  This follows by iterating (1).
\end{proof}

\bigskip\bigskip

As a consequence of Lemma \ref{lem-fiberdiameter}, the sequence of (pseudo)distance functions
$\{d_i\circ \pi_i^\infty:X_\infty\times X_\infty\ra [0,\infty)\}_{i\geq 0}$ converges 
geometrically to a distance function $d_\infty$
on $X_\infty$.    Since $\pi_i^\infty$ is surjective for all $i\geq 0$,
the  lemma also implies that $\{\pi_i^\infty:(X_\infty,d_\infty)\ra (X_i,d_i)\}_{i\geq 0}$
is a sequence of Gromov-Hausdorff approximations, so $(X_i,d_i)$ converges to
$(X_\infty,d_\infty)$ in the Gromov-Hausdorff topology.

\begin{lemma}

\mbox{}

\begin{enumerate}\setlength{\itemsep}{.5ex}
\item $\bar d_\infty\leq d_\infty$, with equality on montone geodesic segments.
\item $d_\infty\leq \frac{2m+\th}{2(m-1)}\cdot\bar d_\infty$.
\end{enumerate}
\end{lemma}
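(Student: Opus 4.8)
The plan is to prove the two inequalities separately, using the two prior lemmas in this section as the workhorses. For part (1), the inequality $\bar d_\infty\leq d_\infty$ should follow directly from the defining property of $\bar d_\infty$ as a supremum: it suffices to check that $d_\infty$ satisfies the star-diameter constraint from Definition \ref{def-dbarinfty}, i.e. that $(\pi_j^\infty)^{-1}(\Star(v,X_j))$ has $d_\infty$-diameter at most $2m^{-j}$. Since $\pi_j^\infty:(X_\infty,d_\infty)\to(X_j,d_j)$ is $1$-Lipschitz (this is built into the way $d_\infty$ arises as the geometric limit of $d_i\circ\pi_i^\infty$), the preimage of $\Star(v,X_j)$ has $d_\infty$-diameter no larger than the $d_j$-diameter of $\Star(v,X_j)$, which is $2m^{-j}$; hence $d_\infty$ is one of the pseudo-distances in the supremum defining $\bar d_\infty$, giving $\bar d_\infty\leq d_\infty$. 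For the equality-on-monotone-segments claim: if $\ga\subset X_\infty$ is a monotone geodesic segment from $x$ to $x'$, then $\phi\circ\ga$ is a directed embedding onto an interval $I\subset\R$, and by the lemma in Section \ref{subsec-directedpaths}, $\bar d_\infty(x,x')=\length_{\bar d_\infty}(\ga)=\diam(I)$; on the other hand $\phi$ is $1$-Lipschitz for $d_\infty$ as well (since $\phi$ factors through every $\pi_i^\infty$ and each $\phi_i$ is $1$-Lipschitz for $d_i$), so $d_\infty(x,x')\geq\diam(I)=\bar d_\infty(x,x')$, and combined with $\bar d_\infty\leq d_\infty$ we get equality. (One should double-check that $\phi\circ\ga$ has length exactly $\diam I$, i.e. $\ga$ really is $\phi$-isometric onto its image — this is the content of the monotone-geodesic lemma.)

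For part (2), I would unwind both metrics to the finite stages. Fix distinct $x,x'\in X_\infty$. Write $a=\bar d_\infty(x,x')$ and apply Lemma \ref{lem-trimmedstarscale}: there is an index $j$ with $\tfrac{m-2}{m}m^{-j}<a$, and moreover $\{\pi_{j-1}(x),\pi_{j-1}(x')\}$ lies in a single trimmed star $\tstar(v,X_{j-1})$, hence at $d_{j-1}$-distance $\leq 2m^{-(j-1)}$ — actually $\leq\tfrac{2(m-1)}{m}m^{-(j-1)}$ if one is careful with the trimmed-star radius, though the weaker bound suffices. Now apply Lemma \ref{lem-fiberdiameter}(2) to propagate this separation bound down the inverse system: for every $k>j-1$ and every pair of lifts $x_k,x_k'\in X_k$ of $\pi_{j-1}(x),\pi_{j-1}(x')$,
$$
d_k(x_k,x_k')\leq d_{j-1}(\pi_{j-1}(x),\pi_{j-1}(x'))+(2m+\th)\cdot\frac{m^{-(j-1)}-m^{-k}}{m-1}\,.
$$
In particular this holds for $x_k=\pi_k^\infty(x)$, $x_k'=\pi_k^\infty(x')$; letting $k\to\infty$ and using that $d_k\circ\pi_k^\infty\to d_\infty$ geometrically, we obtain
$$
d_\infty(x,x')\leq d_{j-1}(\pi_{j-1}(x),\pi_{j-1}(x'))+(2m+\th)\cdot\frac{m^{-(j-1)}}{m-1}\,.
$$
Substituting the trimmed-star bound for the first term and then bounding $m^{-(j-1)}=m\cdot m^{-j}<\tfrac{m^2}{m-2}\,a$ from Lemma \ref{lem-trimmedstarscale}, everything collapses to $d_\infty(x,x')\leq(\text{const}(m,\th))\cdot a=(\text{const})\cdot\bar d_\infty(x,x')$. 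The remaining task is purely arithmetic: verify that the constant one gets this way is at most $\tfrac{2m+\th}{2(m-1)}$ as claimed. I expect this bookkeeping to require choosing the sharper trimmed-star radius $\tfrac{m-1}{m}m^{-i}$ (noted right after the definition of $\tstar$) rather than the crude $2m^{-i}$, and perhaps a slightly more careful version of Lemma \ref{lem-trimmedstarscale}'s lower bound; getting the clean stated constant is where the only real friction lies. The conceptual content — $1$-Lipschitzness of $\pi_k^\infty$ and $\phi$, the geometric convergence $d_k\circ\pi_k^\infty\to d_\infty$, and the separation estimate of Lemma \ref{lem-fiberdiameter} — is all already in hand.
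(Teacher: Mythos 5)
There are genuine gaps in all three parts of your argument.

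For $\bar d_\infty\le d_\infty$: your claim that $1$-Lipschitzness of $\pi_j^\infty:(X_\infty,d_\infty)\ra(X_j,d_j)$ forces $(\pi_j^\infty)^{-1}(\Star(v,X_j))$ to have $d_\infty$-diameter at most $2m^{-j}$ runs the Lipschitz estimate backwards: a $1$-Lipschitz map bounds the diameter of the \emph{image} by that of the source set, not the diameter of a preimage by that of its image. The preimage of a star contains entire fibers of $\pi_j^\infty$, and by Lemma \ref{lem-fiberdiameter} these have $d_\infty$-diameter up to roughly $(2m+\th)m^{-j}/(m-1)>0$; so $d_\infty$ does \emph{not} satisfy the constraint of Definition \ref{def-dbarinfty} and is not a competitor in the supremum. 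The paper instead takes a nearly minimizing $d_i$-path between $\pi_i(x)$ and $\pi_i(x')$, covers it by about $2+d_\infty(x,x')/(2m^{-i})$ stars, feeds the resulting chain into Lemma \ref{lem-altdefdbarinfty}, and lets $i\to\infty$. Relatedly, your equality claim on monotone segments is vacuous as written: you prove $d_\infty(x,x')\ge\bar d_\infty(x,x')$ and then ``combine'' it with $\bar d_\infty\le d_\infty$, but these are the same inequality. The missing direction $d_\infty(x,x')\le\bar d_\infty(x,x')$ comes from the fact that a directed path $\ga$ satisfies $\length(\pi_i\circ\ga)=|\phi(x)-\phi(x')|$ for every $i$, whence $d_i(\pi_i(x),\pi_i(x'))\le|\phi(x)-\phi(x')|\le\bar d_\infty(x,x')$ by Theorem \ref{thm-canonical_map_to_r}(3), and one passes to the limit.

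For part (2), starting from Lemma \ref{lem-trimmedstarscale} structurally cannot produce the stated constant; it is not a matter of bookkeeping. Your route yields $d_\infty(x,x')\le\bigl(2+\tfrac{2m+\th}{m-1}\bigr)m^{-(j-1)}$ while the lower bound available is only $\bar d_\infty(x,x')>\tfrac{m-2}{m}m^{-j}$, so the ratio you obtain is of order $\tfrac{m^2}{m-2}\bigl(2+\tfrac{2m+\th}{m-1}\bigr)$ --- larger than $\tfrac{2m+\th}{2(m-1)}$ by a factor comparable to $m^2$ (for $m=4$, $\th=2$ this is roughly $43$ versus the claimed $5/3$). The loss comes from comparing an upper bound at scale $j-1$ with a lower bound at scale $j$, and no sharpening of the trimmed-star radius removes it. The correct tool is again Lemma \ref{lem-altdefdbarinfty}: write $\bar d_\infty(x,x')$ as an infimum of sums $\sum_k 2m^{-i_k}$ over chains of star containments, bound the $d_\infty$-distance across each single link by Lemma \ref{lem-fiberdiameter} (each link costs at most about $\tfrac{(2m+\th)m^{-i_k}}{m-1}$), and sum over the chain, so that the constant appears link by link rather than globally.
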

\begin{proof}
(1).  Suppose $x,x'\in X_\infty$ and for some $i\in \Z$ 
$\ga:I\ra X_i$ is a geodesic from $\pi_i(x)$ to $\pi_i(x')$.
Then the image of $\ga$ is contained in a chain of at most 
$2+\frac{d_\infty(x,x')}{2m^{-i}}$ stars in $X_i$.  Since
$\pi_i^\infty$ is surjective, Definition \ref{def-dbarinfty} implies 
$$
\bar d_\infty(x,x')\leq d_\infty(x,x')+2m^{-i}\,.
$$
Thus $\bar d_\infty\leq d_\infty$.

If $x,x'\in X_\infty$ are joined by a directed path $\ga:I\ra X_\infty$, then
\begin{align*}
d_\infty(x,x')\leq& \length(\ga)=\length(\pi_i\circ\ga)\\
=&d(\phi(\ga(0)),\phi(\ga(1)))
\leq\bar d_\infty(x,x')\,,
\end{align*}
where the last equality follows from Theorem \ref{thm-canonical_map_to_r}(3).

(2).    If $x,x'\in X_\infty$ and 
$\{\pi_i(x),\pi_i(x')\}\subset\Star(v,X_i)$ for some $i\in \Z$, $v\in V_i$, 
then $d_i(\pi_i(x),\pi_i(x'))\leq 2m^{-i}$ and so
$$
d_\infty(x,x')\leq \frac{(2m+\th)m^{-i}}{m-1}  
$$
by Lemma \ref{lem-fiberdiameter}.   It follows from Lemma \ref{lem-altdefdbarinfty}
that 
$$
d_\infty\leq \frac{(2m+\th)}{2(m-1)}\cdot \bar d_\infty\,.
$$

\end{proof}

\begin{corollary}
If
$\phi:X_\infty\ra\R$ is the map from 
Theorem \ref{thm-canonical_map_to_r}, then $\phi:(X_\infty,d_\infty)\ra\R$
satisfies the hypotheses of Theorem \ref{thm-length_space_embedding}.
\end{corollary}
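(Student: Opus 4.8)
The plan is to verify the three hypotheses of Theorem \ref{thm-length_space_embedding} for the map $\phi:(X_\infty,d_\infty)\ra\R$: that $\phi$ is Lipschitz, that $(X_\infty,d_\infty)$ is a length space, and that the connected components of $\phi^{-1}(I)$ have diameter bounded by a fixed multiple of $\diam(I)$. The Lipschitz bound is immediate: by Theorem \ref{thm-canonical_map_to_r}(3) the map $\phi$ is $1$-Lipschitz with respect to $\bar d_\infty$, and $\bar d_\infty\le d_\infty$ by the preceding lemma, so $\phi:(X_\infty,d_\infty)\ra\R$ is $1$-Lipschitz.

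Next I would show that $(X_\infty,d_\infty)$ is a complete length space. For completeness, Lemma \ref{lem-fiberdiameter} yields the uniform comparison $d_i\circ\pi_i^\infty\le d_\infty\le d_i\circ\pi_i^\infty+\frac{2m+\th}{m-1}\,m^{-i}$; hence a $d_\infty$-Cauchy sequence projects to a $d_i$-Cauchy sequence in each $(X_i,d_i)$, which is complete (a connected graph all of whose edges have length $m^{-i}$), and the resulting compatible limits assemble to a point of $X_\infty$ to which the original sequence $d_\infty$-converges. For the length-space property I would invoke the fact, already recorded in the text as a consequence of Lemma \ref{lem-fiberdiameter}, that $(X_i,d_i)\ra(X_\infty,d_\infty)$ in the Gromov--Hausdorff topology, together with the standard fact that a complete Gromov--Hausdorff limit of length spaces is a length space. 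Alternatively one can argue directly: given $x,x'\in X_\infty$ and $\eps>0$, choose $i$ with $\frac{2m+\th}{m-1}m^{-i}<\eps$, take a geodesic midpoint $y_i$ of $\pi_i^\infty(x),\pi_i^\infty(x')$ in the geodesic space $(X_i,d_i)$, and lift a geodesic from $\pi_i^\infty(x)$ to $y_i$ successively through $X_{i+1},X_{i+2},\dots$ using Lemma \ref{lem-pathlifting}; these lifts are compatible, so they determine a path in $X_\infty$ starting at $x$ of the same length, whose endpoint one checks to be an $\eps$-approximate midpoint of $x$ and $x'$. With completeness this produces geodesics between points, so $(X_\infty,d_\infty)$ is a length space.

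For the third hypothesis, fix an interval $I\subset\R$ and a connected component $K$ of $\phi^{-1}(I)$; note that $d_\infty$ and $\bar d_\infty$ induce the same topology (and the same notion of $\de$-component), since they are biLipschitz equivalent by the preceding lemma. Since a connected subset of a metric space lies in a single $\de$-component for every $\de>0$, $K$ is contained in one $\diam(I)$-component of $\phi^{-1}(I)$, which by Theorem \ref{thm-canonical_map_to_r}(3) has $\bar d_\infty$-diameter at most $8m\cdot\diam(I)$; part (2) of the preceding lemma then gives $\diam_{d_\infty}(K)\le\frac{2m+\th}{2(m-1)}\cdot 8m\cdot\diam(I)=\frac{4m(2m+\th)}{m-1}\,\diam(I)$, which is the required bound, with $C=\frac{4m(2m+\th)}{m-1}$.

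The step I expect to be the real obstacle is showing that $d_\infty$ is a genuine length metric: path lifting (Lemma \ref{lem-pathlifting}) only produces lifts with a prescribed initial point, not with a prescribed endpoint, so one cannot simply lift a near-geodesic from $x$ to $x'$ up to $X_\infty$. This is what forces the argument to go through approximate midpoints together with completeness — equivalently, through the Gromov--Hausdorff convergence supplied by Lemma \ref{lem-fiberdiameter} — rather than by a direct path-lifting construction.
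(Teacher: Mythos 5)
Your proof is correct and follows essentially the same route as the paper, whose entire proof is a one-line citation of the same two ingredients: the comparison lemma $\bar d_\infty\le d_\infty\le \frac{2m+\th}{2(m-1)}\bar d_\infty$ and Theorem \ref{thm-canonical_map_to_r}(3). You additionally spell out the verification that $(X_\infty,d_\infty)$ is a length space (via completeness and approximate midpoints, or Gromov--Hausdorff convergence), a hypothesis the paper leaves implicit; that is a legitimate and necessary detail, correctly handled, though your parenthetical claim that biLipschitz-equivalent metrics have the same $\de$-components is not literally true and is fortunately not used anywhere in your argument.
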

\begin{proof}
This follows from the previous Lemma and Theorem \ref{thm-canonical_map_to_r}(3).
\end{proof}

\bigskip\bigskip

\section{Realizing metric spaces as  limits of admissible inverse systems}
\label{sec-realizing_metric_spaces}

 In this section, we 
characterize metric spaces which are bilipschitz homeomorphic to inverse limits
of admissible inverse systems,  proving Theorem \ref{thm-realization}. 

Suppose  a metric  space $Z$ is bilipschitz equivalent to 
the inverse limit of an admissible inverse system.   Evidently, if 
$X_\infty$ is such an inverse limit, $\phi:X_\infty\ra\R$
is as in Theorem \ref{thm-canonical_map_to_r}, and  $F:Z\ra X_\infty$ is
a bilipschitz homeomorphism, then the composition
$u=\phi\circ F:Z\ra \R$ has the property that for every interval $I\subset \R$, the
$\diam(I)$-components of $u^{-1}(I)$ have diameter at most
comparable to $\diam(I)$, (see Theorem \ref{thm-canonical_map_to_r}).  In other words
a necessary condition for a space to be bilipschitz homeomorphic to 
an inverse limit is the existence of a map satisfying the hypotheses of 
Theorem \ref{thm-realization}.     Theorem \ref{thm-realization} says that
the existence of such a map is sufficient.

We now prove Theorem \ref{thm-realization}.

Fix $m\in \N$, $m\geq 2$, and let $u:X\ra\R$ be as in the statement of the theorem.

Let $\{Y_i\}_{i\in\Z}$ be a  sequence of subdivisions of $\R$, where:
\begin{itemize}
\item 
$Y_i$ is a subdivision of $\R$ into intervals of length $m^{-i}$ for all $i\in \Z$.
\item $Y_{i+1}$ is a subdivision of $Y_i$ for all $i\in \Z$.
\end{itemize}

We define a simplicial graph $X_i$ as follows.  The vertex set of $X_i$ 
is the collection of pairs $(v,U)$ where $v$ is a vertex of $Y_i$ and $U$ is a $m^{-i}$-component of 
$u^{-1}(\Star(v,Y_i))$.    Two distinct vertices $(v_1,U_1),(v_2,U_2)\in X_i$ span
an edge iff  $U_1\cap U_2\neq \emptyset$; note that
this can only happen if $v_1,v_2$  are distinct adjacent vertices of $Y_i$. 

We have an projection map $\phi_i:X_i\ra Y_i$ which sends each vertex
$(v,U)$ of $X_i$ to $v\in Y_i$, and is a linear isomorphism
on each edge of $X_i$.   If $(\hat v,\hat U)$ is a vertex of $X_{i+1}$, 
there there will be a vertex $(v,U)$ of $X_i$ such that
$\hat U\subset U$ and $\Star(\hat v,Y_{i+1})\subset \Star(v,Y_i)$; 
there are at most two such vertices, and they will span an edge in $X_i$. 
Therefore we obtain a well-defined
projection map $\pi_i:X_{i+1}\ra X_i$ such that 
$\phi_i\circ \pi_i=\phi_{i+1}$, and which induces a simplicial map $\pi_i:X_{i+1}\ra X_i'$.

We define $f_i:X\ra X_i$ as follows.  Suppose $z\in X$ and $u(z)\in \R\simeq Y_i$ belongs to the
edge $e=\ol{v_1v_2}
=\Star(v_1,Y_i)\cap \Star(v_2,Y_i)$.   Then $z$ belongs to an $m^{-i}$-component
of $\Star(v_j,Y_i)$ for $j\in\{1,2\}$, and therefore these two components span an edge $\hat e$
of $X_i$ which is mapped isomorphically to $e$ by $\phi_i$.  We define $f_i(z)$
to be $\phi_i^{-1}(u(z))\cap\hat e$.
The sequence $\{f_i\}_{i\in \Z}$ is clearly compatible, so we have a well-defined
map $f_\infty:\ra X_\infty$.

Now suppose $z_1,z_2\in X$ and for some $i\in \Z$ we have $d(z_1,z_2)\leq m^{-i}$.
Then $\{u(z_1),u(z_2)\}\subset \Star(v,Y_i)$ for some vertex $v\in Y_i$, and $z_1,z_2$
lie in the same $m^{-i}$ component of $u^{-1}(\Star(v,Y_i))$.    Therefore
$\{f_i(z_1),f_i(z_2)\}$ is contained in $\Star(\hat v,X_i)$ for some vertex 
$\hat v$ of $X_i$.  It follows that $\bar d_\infty(f_\infty(z_1),f_\infty(z_2))\leq 2m^{-i}$,
from the definition of $\bar d_\infty$.    Thus $f_\infty$ is Lipschitz.

On the other hand, if $z_1,z_2\in X$ and $\bar d_\infty(f_\infty(z_1),f_\infty(z_2))\leq m^{-i}$,
then by Lemma \ref{lem-starscale}, we have $\{f_j(z_1),f_j(z_2))\}\subset \Star(\hat v,X_j)$ for some 
vertex $\hat v\in X_j$, where $|i-j|$ is controlled.  By the construction of $f_j$, this
means that $\{z_1,z_2\}$ lie in an $m^{-j}$-component of $u^{-1}(\Star(v,Y_j))$ for
$v=\phi_j(\hat v)\in Y_j$.  By our assumption on $u$, this gives $d(z_1,z_2)\lesssim m^{-i}$.
Thus $f_\infty$ is $L'$-bilipschitz, where $L'$ depends only on $C$ and $m$.

\section{A special case of Theorem \ref{thm-bilipschitzembedding} }
\label{sec-special_case}

Let $\{X_j\}$ be an admissible inverse system as in Definition \ref{def-admissibleinversesystem}.

\begin{assumption} 
\label{ass-tempfinite}
We will temporarily assume that:
\begin{enumerate}
\setlength{\itemsep}{.5ex}
\item $\pi_i$ is finite-to-one for all $i\in\Z$.
\item $X_i\simeq\R$ and $\pi_i:X_{i+1}\ra X_i'$ is an isomorphism for all $i\leq 0$.
\item For every $i\in\Z$,  every vertex $v\in V_i$ has neighbors $v_\pm\in V_i$
with $v_-\prec v\prec v_+$.   Equivalently, $X_i$ is a union of  (complete) monotone
geodesics (see Definition \ref{def-monotone_geodesic}). 
\end{enumerate}
In particular, (1) and (2) imply that $X_i$ has finite valence for all $i$.
\end{assumption}

\noindent
This extra assumption will be removed in
Section \ref{sec-generalcase}, in order to complete the proof in the general case.  We remark that it is possible
to adapt all the material  to the general setting, but this would impose a technical burden
that is largely avoidable.  Furthermore, Assumption \ref{ass-tempfinite} effectively
covers many cases of 
interest, such as Examples \ref{ex-laaksodiamond} and \ref{ex-laaksoahlforsregularpi}.

\section{Slices and the associated measures}
\label{sec-slices_measures}
Rather than working directly with monotone subsets as described in the introduction, we
instead work with subsets which we call slices, which are sets
of vertices which arise naturally as the boundaries of monotone subsets.     A slice
in $S\subset X_i$ gives rise to a family of slices in $X_{i+1}$ -- its children --
by performing  local modifications to
the inverse image $\pi_i^{-1}(S)\subset X_{i+1}$.   The children of $S$  carry a natural probability 
measure which treats disjoint  local modifications as independent.  This section develops the 
properties of slices and their children, and then introduces a family of measures $\{\Si_i'\}_{i\in\Z}$ on slices.

Let $\{X_i\}$ be an admissible inverse system satisfying Assumption \ref{ass-tempfinite}.

\subsection{Slices and their descendents}
We recall from Section \ref{subsec-directedpaths}  that $X_i$ carries a partial order $\preceq$.

\begin{definition}
\label{def-cutboundary}
A {\bf partial slice in $X_i'$} is a finite subset $S\subset V_i'$ which intersects
each monotone geodesic $\ga\subset X_i$ at most once; this is equivalent to saying
that no two elements of $S$ are comparable:  if $v,v'\in S$ and $v\preceq v'$, then
$v=v'$.
A {\bf slice in $X_i'$} is a partial slice which
intersects each monotone geodesic precisely once.
We denote the set of slices in $X_i'$ and partial slices in 
$X_i'$ by $\slice_i'$ and $\pslice_i'$ respectively.
\end{definition}

The vertex set  $V_i'$ is countable, in view of Assumption \ref{ass-tempfinite}.
Every partial slice is finite, so this implies that the collection of partial slices is countable.
When  $i\leq 0$,  $X_i'$ is a copy of $\R$ with a standard subdivision,  so the slices
in $X_i'$ are just singletons $\{v\}$, where $v\in V_i'$.

Note that we cannot have $w\prec x \prec w'$ for $x\in X_i$, $S\in \slice_i'$,
and $w,w'\in S$, because we could concatenate a monotone geodesic segment
joining $w$ to $w'$ with monotone rays, obtaining a monotone geodesic
which intersects $S$ twice.  Therefore we use the notation $x\prec S$ if there
is a $w\in S$ such that $x\prec w$.  The relations $x\succ S$, $x\preceq S$,
and $x \succeq S$ are defined similarly.  

A slice $S\in \slice_i'$ {\bf separates} (respectively {\bf weakly separates})
$x_1,x_2\in X_i$ if $x_1\prec S\prec x_2$
or $x_2\prec S\prec x_1$ (respectively $x_1\preceq S\preceq x_2$
or $x_2\preceq S\preceq x_1$).

If $S\in \slice_i'$, $v\in X_i\setminus S$, then we define
$$
\side(v,S)=
\begin{cases}
\prec &\quad \mbox{if} \quad v \prec S\\
\succ & \quad \mbox{if} \quad v \succ S
\end{cases}
$$

Slices give rise to monotone sets:
\begin{lemma}
\label{lem-cutboundariesgivemonotonecuts}
If $S\in \slice_i'$, define 
$
S_{\preceq}=\{x\in X_i\mid x\preceq S\}$ and
$S_{\succeq}=\{x\in X_i\mid x\succeq S\}
$.  Then $S_{\preceq}$ and $S_{\succeq}$ are
both monotone sets with boundary $S$.
\end{lemma}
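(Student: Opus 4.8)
The plan is to reduce both assertions to a single combinatorial observation and then read everything off from it. The key claim I would isolate first is: \emph{for every monotone geodesic $\ga\subset X_i$ and every point $x\in\ga$, one has $x\preceq S$ if and only if $x\preceq w$, where $\{w\}=S\cap\ga$ is the unique intersection point} (and symmetrically with $\succeq$ and $\succ$). The nontrivial direction is the forward one: if $x\preceq w'$ for some $w'\in S$, then $x$ and $w$ both lie on the totally ordered set $\ga$, so either $x\preceq w$ (done) or $w\prec x$; in the latter case $w\prec x\preceq w'$ forces $w\prec w'$, contradicting the fact that distinct elements of a partial slice are incomparable. Note this uses nothing about whether $x$ is a vertex, and that $S\cap\ga$ is a single point precisely because $S$ is a slice.

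Granting the claim, monotonicity of $S_\preceq$ and $S_\succeq$ is immediate. Parametrize a monotone geodesic $\ga$ isometrically by $\R$ so that $\preceq$ becomes the usual order and $w$ corresponds to some $t_0\in\R$; by the claim, $\ga\cap S_\preceq$ is identified with $(-\infty,t_0]$ and $\ga\cap S_\succeq$ with $[t_0,\infty)$, and both of these, together with their complements in $\ga$, are connected — which is exactly Definition \ref{def-monotone_set}. Since by Assumption \ref{ass-tempfinite}(3) every edge of $X_i$ is contained in a complete monotone geodesic, the same description shows that $S_\preceq$ meets each closed edge in a closed sub-arc; as $X_i$ is locally finite, closedness may be checked edge by edge, so $S_\preceq$ (and likewise $S_\succeq$) is closed in $X_i$.

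It remains to show $\partial S_\preceq=S$; the statement for $S_\succeq$ then follows by reversing the partial order, which interchanges $S_\preceq$ and $S_\succeq$ and preserves the notions of slice and of monotone geodesic. Since $S_\preceq$ is closed, $\partial S_\preceq=S_\preceq\cap\overline{X_i\setminus S_\preceq}$. If $w\in S$, pick a complete monotone geodesic $\ga$ through $w$; by the claim the points of $\ga$ immediately above $w$ lie outside $S_\preceq$ but converge to $w$, so $w\in\partial S_\preceq$, giving $S\subseteq\partial S_\preceq$. Conversely let $x\in\partial S_\preceq$, so $x\in S_\preceq$, and suppose $x\notin S$. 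For each of the finitely many edges $e$ of $X_i$ containing $x$, choose a complete monotone geodesic $\ga_e\supseteq e$ and put $\{w_e\}=S\cap\ga_e$; then $x\in S_\preceq\cap\ga_e$ gives $x\preceq w_e$ by the claim, and $x\neq w_e$ since $x\notin S$, hence $x\prec w_e$. Thus on each such $\ga_e$ a short arc around $x$ lies in $(-\infty,w_e]\subseteq S_\preceq$; taking the minimum of these finitely many positive radii, a whole ball $B(x,\delta)$ lies in $S_\preceq$, so $x\in\Int S_\preceq$, contradicting $x\in\overline{X_i\setminus S_\preceq}$. Hence $x\in S$, and $\partial S_\preceq\subseteq S$.

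The step I expect to be the main obstacle is the inclusion $\partial S_\preceq\subseteq S$: this is the only place where one genuinely needs the local finiteness of $X_i$ (to pass to a common radius $\delta$ over the edges at $x$) together with a little care about the local structure of $X_i$ at a vertex as opposed to at an interior point of an edge. The monotonicity, the closedness of $S_\preceq$, and the reverse inclusion $S\subseteq\partial S_\preceq$ are essentially formal consequences of the key claim.
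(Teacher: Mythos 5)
Your proof is correct and rests on the same two facts the paper's own argument uses: a slice meets each monotone geodesic exactly once, and distinct elements of a (partial) slice are incomparable, so that on every monotone geodesic $S_\preceq$ restricts to a half-line and $S_\preceq^c=S_\succ$. The paper's proof is a two-line remark that only addresses monotonicity and treats the boundary identity $\partial S_\preceq=S$ as immediate; your write-up supplies that verification in full (correctly invoking local finiteness and Assumption \ref{ass-tempfinite}(3)), but it is not a genuinely different approach.
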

\begin{proof}
Since $S_\preceq^c=S_\succ=\{x\in X_i\mid x\succ S\}$, the monotonicity of $S_\preceq$ 
follows immediately from the definition of slices.  Similarly for $S_\succeq$.
\end{proof}

Given a vertex $v\in V_i$, we can associate a collection of partial slices
 in $X_i'$:
\begin{definition}
If $v\in V_i$, a {\bf child of $v$} is a maximal partial slice  $S'\in \pslice_i'$
which is contained in the trimmed star $\tstar(v,X_i)$, see Figure
\ref{fig-children}.  In other words, 
$S'\subset \tstar(v,X_i)$ and precisely one of the
following holds:
\begin{enumerate}\setlength{\itemsep}{.5ex}
\item $S'=\{v\}$.
\item For every vertex $w\in\Star(v,X_i)$ with $v\prec w$, $S'$ intersects
the edge $\ol{vw}$ in precisely one point, which is an interior point.
\item For every vertex $w\in\Star(v,X_i)$ with $w\prec v$, $S'$ intersects
the edge $\ol{vw}$ in precisely one point, which is an interior point.
\end{enumerate}
We denote the collection of children of $v$ by $\child(v)$, and refer to the
children  in the above cases as {\bf children of type (1), (2), or (3)} respectively.
\end{definition}

\begin{figure}[h] 

\begin{center}  
\includegraphics[scale=.8]{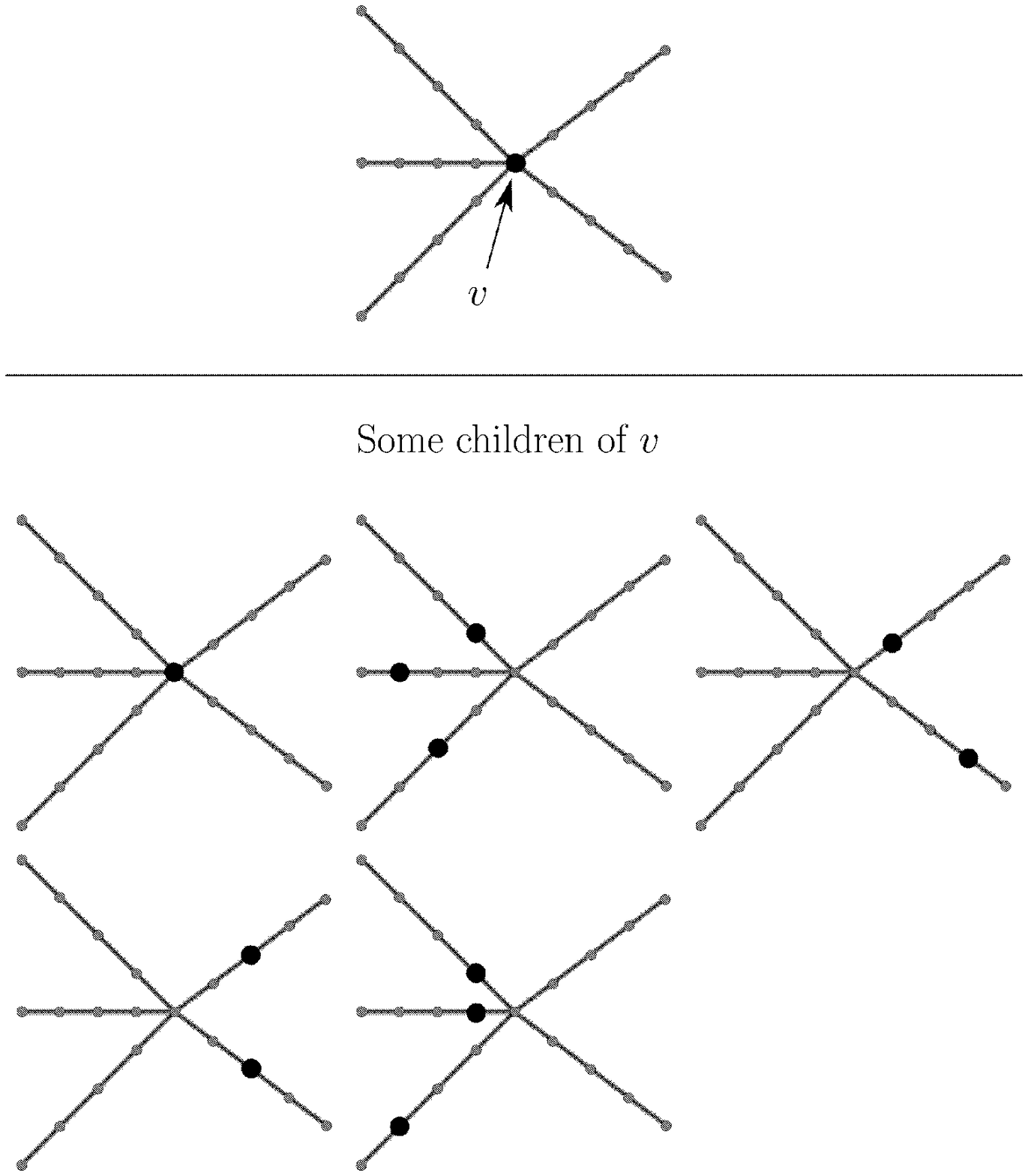} 
\caption{\label{fig-children}}
\end{center}

\end{figure}

Note that if $S\in \pslice_i'$ and $v_1,v_2\in V_{i+1}$ are distinct vertices
lying in $\pi_i^{-1}(S)$, then their trimmed stars are disjoint.

\begin{definition}
If $S\in \pslice_i'$ is a partial slice, a {\bf child of $S$}
is a subset  $S'\subset V_{i+1}'$  
obtained by replacing each vertex $v\in \pi_i^{-1}(S)\subset V_{i+1}$ with one
of its children, so that $S'$ is a subset of $V_{i+1}'$.  More formally,
$S'$ belongs to the image of the ``union map'' 
$$
\prod_{v\in \pi_i^{-1}(S)}\child(v)\lra V_{i+1}'
$$
which sends $\prod_v\,(S_v)$ to $\cup_v\,S_v$.  We use $\child(S)\subset \pslice_{i+1}'$ to denote the
children of $S\in \pslice_i'$.
\end{definition}

\begin{lemma}
If $S$ is a partial slice, so is
each of its children.    Moreover, if $S\in \slice_i'$ is a slice, so is $S'$.
\end{lemma}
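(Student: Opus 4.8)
The claim has two parts: (i) any child $S'$ of a partial slice $S\in\pslice_i'$ is again a partial slice, and (ii) if moreover $S$ is a (full) slice, then $S'$ is a slice. For (i), the plan is to verify the defining property of a partial slice directly: no two elements of $S'$ are $\preceq$-comparable. Take $w_1',w_2'\in S'$ with $w_1'\preceq w_2'$ via a directed path $\ga$. By construction each $w_j'$ lies in the trimmed star $\tstar(v_j,X_i)$ of some $v_j\in\pi_i^{-1}(S)$. First I would handle the case $v_1=v_2=v$: a child of $v$ is a partial slice contained in $\tstar(v,X_i)$ (immediate from the three cases in its definition — a singleton, or one interior point per outgoing edge, or one interior point per incoming edge), so $w_1'=w_2'$. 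If $v_1\neq v_2$, project $\ga$ down by $\pi_i$: since $\pi_i$ is direction preserving, $\pi_i\circ\ga$ is a directed path in $X_i'$ from $\pi_i(w_1')$ to $\pi_i(w_2')$, and since $w_j'\in\tstar(v_j,X_i)$ means $\pi_i(w_j')$ lies in the open star $\Star^o(v_j,X_i)$, this directed path witnesses $v_1\preceq v_2$ in $X_i$ (using that the partial order is determined by directed paths and that interior points of an edge incident to $v_j$ are comparable to $v_j$). But $v_1,v_2\in S$ and $S$ is a partial slice, so $v_1=v_2$, contradicting $v_1\neq v_2$. Hence $S'$ is a partial slice.

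For (ii), I need to upgrade ``no two comparable elements'' to ``meets every monotone geodesic exactly once'', equivalently ``meets every monotone geodesic at least once'' given part (i). Let $\ga\subset X_{i+1}$ be a monotone geodesic; then $\pi_i(\ga)$ is a monotone geodesic in $X_i$ (projections of directed isometric embeddings onto $\R$ are still directed isometric embeddings, by the lemmas in Section~\ref{subsec-directedpaths}), so it meets $S$ in a unique vertex $v\in S$. Let $v'=\ga\cap\pi_i^{-1}(v)$ be the corresponding point on $\ga$ — this is a single point since $\ga$ is a monotone geodesic and $\pi_i$ restricted to $\ga$ is the monotone reparametrization onto $\pi_i(\ga)$. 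Now $v'\in V_{i+1}$ lies in $\pi_i^{-1}(S)$, so in forming $S'$ it is replaced by one of its children $S_{v'}\in\child(v')$. The key point is that $\ga$ passes through the trimmed star $\tstar(v',X_{i+1})$, and one checks that every child of $v'$ — whether of type (1), (2), or (3) — must meet any monotone geodesic through $v'$: a type-(1) child is $\{v'\}$ itself and $v'\in\ga$; a type-(2) child meets every outgoing edge at $v'$ and $\ga$ continues forward out of $v'$ along exactly one such edge; a type-(3) child meets every incoming edge and $\ga$ enters $v'$ along exactly one such edge. In each case $S_{v'}\cap\ga\neq\emptyset$, hence $S'\cap\ga\neq\emptyset$. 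Combined with (i), $S'$ meets $\ga$ exactly once, so $S'$ is a slice.

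\textbf{Main obstacle.} The routine bookkeeping is fine; the one place requiring care is the step in (ii) showing that a monotone geodesic $\ga\subset X_{i+1}$ through the vertex $v'\in\pi_i^{-1}(S)$ necessarily enters $v'$ along an incoming edge and leaves along an outgoing edge — so that type-(2) and type-(3) children genuinely intersect $\ga$. This uses Assumption~\ref{ass-tempfinite}(3) (every vertex has both a $\prec$-predecessor and a $\prec$-successor, so monotone geodesics do not terminate at interior vertices) together with the fact that at a valence-2 subdivision vertex of $X_{i+1}$ there is exactly one incoming and one outgoing edge direction in the subdivided graph $X_{i+1}'$. I would also want to double-check the edge case where $v'$ has higher valence in $X_{i+1}$: then $\ga$ still picks out one incoming and one outgoing edge among possibly several, and the type-(2)/(3) children, meeting \emph{all} outgoing (resp.\ incoming) edges, still catch $\ga$. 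No circularity arises: parts (i) and (ii) only use the structure of $\child(v)$ and direction-preservation of $\pi_i$, both established earlier.
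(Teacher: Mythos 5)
Part (ii) of your argument is fine and is essentially the paper's: locate the unique vertex $v'\in\ga\cap\pi_i^{-1}(S)$ and check that a child of $v'$ of any of the three types meets $\ga$. The gap is in part (i). You attach to each $w_j'\in S'$ a vertex $v_j\in\pi_i^{-1}(S)$ with $w_j'\in\tstar(v_j,X_{i+1})$ and split into the cases $v_1=v_2$ and $v_1\neq v_2$; in the second case your contradiction reads ``the projected path gives $v_1\preceq v_2$, and $v_1,v_2\in S$ with $S$ a partial slice, so $v_1=v_2$.'' This conflates $v_j\in V_{i+1}$ with $\pi_i(v_j)\in S$. The projection argument can at best give $\pi_i(v_1)=\pi_i(v_2)$, which is entirely consistent with $v_1\neq v_2$: since $\pi_i:X_{i+1}\ra X_i'$ is in general far from injective on vertices, a single $v\in S$ typically has several preimages in $V_{i+1}$, each contributing its own child to $S'$. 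The case $v_1\neq v_2$ with $\pi_i(v_1)=\pi_i(v_2)$ is precisely where the branching of the system lives (it is the main case in Examples \ref{ex-laaksodiamond} and \ref{ex-laaksoahlforsregularpi}), and neither branch of your case analysis covers it: the chosen children of $v_1$ and of $v_2$ are two different partial slices, and a union of two partial slices need not be a partial slice.

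The missing ingredient --- which also repairs the loose step where you pass from ``$w_1'\preceq w_2'$, each $w_j'$ comparable to $v_j$'' to ``$v_1,v_2$ comparable'' (transitivity alone fails in three of the four sign patterns) --- is that a monotone geodesic $\ga'\subset X_{i+1}$ projects \emph{injectively}, indeed isomorphically, onto a monotone geodesic of $X_i$, and hence meets $\pi_i^{-1}(S)$ in at most one vertex. This is how the paper argues part (i): fix a monotone geodesic $\ga'$; every point of $\ga'\cap S'$ lies in $\tstar(v,X_{i+1})$ for some $v\in\pi_i^{-1}(S)$, which forces $v\in\ga'$ (a bi-infinite monotone geodesic through an interior point of an edge traverses the whole edge); since $\ga'$ contains at most one such $v$, all of $\ga'\cap S'$ lies in a single child of a single vertex, which is a partial slice by definition, so $|\ga'\cap S'|\leq 1$. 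If you prefer your ``no two comparable elements'' formulation, extend the directed path from $w_1'$ to $w_2'$ to a full monotone geodesic using Assumption \ref{ass-tempfinite}(3) and apply the same injectivity observation; as written, though, the proof does not go through.
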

\begin{proof}
Suppose $S'$ is  a child of the partial slice $S\in \pslice_i'$, and $\ga'\subset X_{i+1}$
is a monotone geodesic.    Then $\ga'$ projects isomorphically to a monotone geodesic
$\ga\subset X_i$, so $\pi_i^{-1}(S)\cap \ga'$ contains at most one vertex $v'\in V_{i+1}$.
From the definition of children, it follows that $\ga'\cap S'$ contains at most one point.
If $S$ is a slice, then $S'$ contains precisely one vertex $v\in V_{i+1}'$, and therefore
$S'$ contains a child of $\{v\}$, which will intersect $\ga'$ in precisely one point.

\end{proof}

\begin{definition}
If $S\in\pslice_i'$ and $j>i$, then a partial slice
$S'\in \pslice_j'$ is a {\bf descendent of $S$ in $X_j'$} if
there exist $S=S_i, S_{i+1},\ldots,S_j=S'$
such that  for all $k\in\{i+1,\ldots,j\}$, $S_k\in \pslice_k'$ and $S_k$ is a child of 
$S_{k-1}$; in other words, $S'$ is an iterated child of $S$. 
We denote the collection of such 
descendents by $\desc(S,X_j')$.
\end{definition}

\begin{lemma}
\label{lem-projectionhausdorffdistance}
 For all $i<j$, if $S'\in \pslice_j'$ is a descendent of $S\in \pslice_i'$, then 
 $\pi_i(S')\subset \cup_{w\in S}\Star^o(w,X_i')$.
\end{lemma}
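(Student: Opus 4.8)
The plan is to reduce the assertion to a single metric estimate and then establish that estimate by tracing the descendent chain back from $S_j=S'$ to $S_i=S$.

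First I would record the elementary fact that, since every edge of $X_i'$ has $d_i$-length $m^{-(i+1)}$, the open star $\Star^o(w,X_i')$ of a vertex $w\in V_i'$ contains the open $d_i$-ball $\{x\in X_i : d_i(x,w)<m^{-(i+1)}\}$: a $d_i$-path from $w$ to any other vertex has length at least one edge, so a point at $d_i$-distance $<m^{-(i+1)}$ from $w$ must lie on an edge of $X_i'$ incident to $w$. Hence it suffices to produce, for each $w'\in S'$, a vertex $w\in S$ with $d_i(\pi_i^j(w'),w)<m^{-(i+1)}$.

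To do this I would unwind the definition of $\desc(S,X_j')$: fix a chain $S=S_i,S_{i+1},\dots,S_j=S'$ with $S_k\in\child(S_{k-1})$, and fix $w'=w^{(j)}\in S_j$. For $k$ running from $j$ down to $i+1$, the definition of the children of a partial slice says the tracked vertex $w^{(k)}\in S_k$ lies in some $S_{v^{(k)}}\in\child(v^{(k)})$ with $v^{(k)}\in V_k$ and $\pi_{k-1}(v^{(k)})\in S_{k-1}$; since a child of $v^{(k)}$ is contained in the trimmed star $\tstar(v^{(k)},X_k)=\ol{B(v^{(k)},\tfrac{m-1}{m}m^{-k})}$, we get $d_k(w^{(k)},v^{(k)})\le\tfrac{m-1}{m}m^{-k}$. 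Put $w^{(k-1)}:=\pi_{k-1}(v^{(k)})\in S_{k-1}$; then $\pi_i^{k-1}(w^{(k-1)})=\pi_i^k(v^{(k)})$, and since $\pi_i^k=\pi_i^{k-1}\circ\pi_{k-1}$ is $1$-Lipschitz we get $d_i(\pi_i^k(w^{(k)}),\pi_i^{k-1}(w^{(k-1)}))\le d_k(w^{(k)},v^{(k)})\le\tfrac{m-1}{m}m^{-k}$. Iterating down to $k=i+1$ produces $w^{(i)}\in S_i=S$, and summing these telescoped bounds along the points $\pi_i^j(w^{(j)}),\pi_i^{j-1}(w^{(j-1)}),\dots,\pi_i^i(w^{(i)})=w^{(i)}$ gives
\[
d_i\bigl(\pi_i^j(w'),w^{(i)}\bigr)\le\sum_{k=i+1}^{j}\frac{m-1}{m}\,m^{-k}=m^{-(i+1)}-m^{-(j+1)}<m^{-(i+1)}\,,
\]
so $\pi_i^j(w')\in\Star^o(w^{(i)},X_i')\subset\bigcup_{w\in S}\Star^o(w,X_i')$, which is the claim. (The same computation can be organized as an induction on $j-i$, carrying the sharper bound $m^{-(i+1)}-m^{-(j+1)}$ so that one extra level closes the step.)

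The one delicate point I would watch is the constant: the series $\sum_{k>i}\tfrac{m-1}{m}m^{-k}$ sums to exactly $m^{-(i+1)}$, so it is essential both that the trimmed star — of radius $\tfrac{m-1}{m}m^{-k}$, strictly less than one edge of $X_k$ — be used at every level rather than the full open star, and that $\Star^o(w,X_i')$ be identified with the $m^{-(i+1)}$-ball (the subdivided edge length) rather than the $m^{-i}$-ball. Everything else is routine bookkeeping with the $1$-Lipschitz projections $\pi_i^k$.
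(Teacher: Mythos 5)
Your proof is correct and follows essentially the same route as the paper's: the paper states the one-step containment $\pi_k(S_{k+1})\subset \pi_k\bigl(\cup_{w\in\pi_k^{-1}(S_k)}\tstar(w,X_{k+1})\bigr)$ and says ``iterating this yields the lemma,'' and your argument is exactly that iteration carried out explicitly, with the trimmed-star radius $\tfrac{m-1}{m}m^{-k}$, the $1$-Lipschitz projections, and the geometric sum $m^{-(i+1)}-m^{-(j+1)}<m^{-(i+1)}$ justifying why the accumulated drift stays inside one open star of $X_i'$. Your closing remark about why the trimmed star (rather than the full star) is needed at each level is precisely the point that makes the series converge to the right constant.
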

\begin{proof}
  Suppose $S=S_i,\ldots,S_j=S'$, where $S_k\in \pslice_k'$ and $S_{k+1}$
is a child of $S_k$ for all $i\leq k<j$.  Then 
$$
\pi_k(S_{k+1})\subset \pi_k\left(\cup_{w\in \pi_k^{-1}(S_k)}\tStar(w,X_{k+1}')\right)\,.
$$
Iterating this yields the lemma.
\end{proof}

\subsection{A measure on slices}

We now define a measure on $\slice_i'$ for each $i$, by an iterated diffusion construction.
To do this, we first associate with each vertex $v\in V_i$ a probability measure on 
its children $\child(v)\subset \pslice_i'$.

\begin{definition}
\label{def-vertexchildmeasure}
If $v\in V_i$, let $w_{\child(v)}$ be the probability measure on $\child(v)$
which:
\begin{itemize}
\setlength{\itemsep}{.5ex}
\item Assigns measure $\frac1m$ to the child $\{v\}\in \child(v)$ of type (1).
\item 
Uniformly distributes measure $\frac12\cdot \frac{(m-1)}{m}$ among the children of type (2).
Equivalently, for each vertex $\hat v\in V_i$  adjacent to $v$ with $v\prec \hat v$, we take the uniform
measure on the $(m-1)$ vertices in $V_i'$ which are interior points of the edge $\ol{v\hat v}$, take the 
product of these measures as $\hat v$ ranges over 
$$
\{\bar v\in V_i\mid \bar v\in \Star(v,X_i),\;v\prec\bar v\}\,
$$
and then multiply the result by $\frac12\cdot \frac{(m-1)}{m}$.
\item Uniformly distributes measure $\frac12\cdot \frac{(m-1)}{m}$ among the children of type (3).
\end{itemize}
\end{definition}
Note that if $v'\in V_i'$ belongs to the trimmed star of $v$, then the $w_{\child(v)}$ measure
of the children of $v$ which contain $v'$ is 
\begin{equation}
\label{eqn-childmeasurebalanced}
w_{\child(v)}(\{S\in \child(v)\mid v'\in S\})=
\begin{cases}
\frac{1}{m} &\quad \mbox{if} \quad v'=v\\
\frac{1}{2m} & \quad \mbox{if} \quad v'\neq v
\end{cases}
\end{equation}

Using the measures $w_{\child(v)}$ we define a measure on the children of a
 slice:
\begin{definition}
If $S\in\slice_i'$, we define a probability measure $K_S$ on $\child(S)$ as follows.
We take the product measure $\prod_{v\in \pi_i^{-1}(S)}\,w_{\child(v)}$ on 
$\prod_{v\in \pi_i^{-1}(S)}\,\child(v)$, and push it forward under the
union map 
$$
\prod_{v\in \pi_i^{-1}(S)}\,\child(v)\ra \slice_{i+1}'\,.
$$
 In probabilistic language,
for each $v\in \pi_i^{-1}(S)$, we independently choose a child of $v$ according to the distribution
$w_{\child(v)}$, 
 and then take the union of the resulting children.  Note that this is well-defined because
 the inverse image of any slice is nonempty.
\end{definition}

Now given a measure $\nu$ on $\slice_i'$, we diffuse it to a measure
$\nu'$ on $\slice_{i+1}'$:
\begin{equation}
\label{eqn-diffusion}
\nu'=\sum_{S\in\slice_i'}\,K_{S}\,\nu(S)\,.
\end{equation}
If we view the collection $\{K_S\}_{S\in \slice_i'}$ as defining a kernel 
$$
K_i:\slice_i'\times 
\slice_{i+1}'\ra [0,1]
$$
by the formula $K_i(S,S')=K_S(S')$, then the associated diffusion operator $K_i$ is given by
\begin{equation}
\label{eqn-kernel}
K_i(\nu)(S')=\sum_{S\in \slice_i'}K(S,S')\nu(S)\,.
\end{equation}
When  $i<0$, then this sum will be finite for any measure $\nu$ since $K(S,S')\neq 0$ for only
finitely many $S$. 
\begin{lemma}
\label{lem-finitesum}
 When $i\geq 0$, the sum will be finite provided $\nu$ is supported on the
descendents of slices in $X_0'$.
\end{lemma}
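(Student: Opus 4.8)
\textbf{Proof plan for Lemma \ref{lem-finitesum}.}

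The goal is to show that for $i\geq 0$, if $\nu$ is supported on $\desc(S_0, X_i')$ for some fixed slice $S_0\in\slice_0'$ (or more generally a finite union of such descendent sets), then for every $S'\in\slice_{i+1}'$ the sum $\sum_{S\in\slice_i'}K(S,S')\nu(S)$ has only finitely many nonzero terms. By definition of $\nu'$ and the diffusion kernel, $K(S,S')\neq 0$ only if $S'\in\child(S)$, so it suffices to prove that each slice $S'\in\slice_{i+1}'$ can be a child of only finitely many slices $S\in\slice_i'$ \emph{that lie in the support of $\nu$} — that is, only finitely many slices $S\in\desc(S_0,X_i')$ have $S'$ among their children. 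The plan is to identify the parent $S$ from $S'$ using the projection $\pi_i$, and then use the finite-valence hypothesis from Assumption \ref{ass-tempfinite} together with finiteness of slices to bound the number of candidates.

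The key structural point is this: if $S'\in\child(S)$, then by the definition of children, $S'$ is obtained from $\pi_i^{-1}(S)$ by replacing each vertex $w\in\pi_i^{-1}(S)\subset V_{i+1}$ with one of its children in $X_{i+1}$, so in particular $\pi_i(S')\subset\cup_{w\in S}\Star^o(w,X_i')$ (this is the $j=i+1$ case of Lemma \ref{lem-projectionhausdorffdistance}, or rather its one-step version). Thus every element of $S$ lies within the closed star of some element of $\pi_i(S')$ in $X_i'$; equivalently $S$ is contained in the finite set $T := \cup_{v'\in\pi_i(S')}\overline{\Star(v',X_i')}\cap V_i'$. Here I would first check that $\pi_i(S')$ is finite (it is, since $S'$ is a finite partial slice) and that each closed star $\overline{\Star(v',X_i')}$ contains finitely many vertices — this uses finite valence of $X_i'$, which follows from Assumption \ref{ass-tempfinite}(1),(2) as noted right after that assumption. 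Hence $T$ is a finite subset of $V_i'$, and any parent $S$ of $S'$ is a subset of $T$, so there are at most $2^{|T|}$ candidate parents. This already shows the sum is finite without even invoking the support condition.

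Let me reconsider: the support hypothesis must matter, because in general $\pi_i(S')$ might only tell us about a single connected component while $\slice_i'$ can have slices spread across many components of $X_i$ (recall $X_i$ need not be connected). Actually, since a \emph{slice} meets every monotone geodesic exactly once, and $X_i$ is a union of monotone geodesics by Assumption \ref{ass-tempfinite}(3), a slice $S\in\slice_i'$ is typically infinite when $X_i$ has infinitely many components, so $2^{|T|}$ does not bound the number of slices — rather, the constraint $S\subset T$ plus $|T|<\infty$ forces $S$ itself to be finite, which already rules out $S$ being a genuine slice unless $X_i$ has finitely many monotone geodesics. The clean argument: the support condition restricts $S$ to $\desc(S_0,X_i')$, and by Lemma \ref{lem-projectionhausdorffdistance} every such $S$ satisfies $\pi_0(S)\subset\cup_{w\in S_0}\Star^o(w,X_0')$, a fixed finite set; combined with $S\subset T$ (finite, depending only on $S'$), we get that parents $S$ both live over a fixed finite region of $X_0$ and are contained in a fixed finite vertex set $T$, so there are finitely many. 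The main obstacle is bookkeeping the two finiteness constraints correctly — reconciling ``$S$ is a slice hence meets every monotone geodesic'' with ``$S\subset T$ finite'' — which is resolved precisely by observing that the support condition forces $S$ to be a \emph{descendent} of a slice in $X_0'$, and descendents project to a bounded region where only finitely many monotone geodesics live.
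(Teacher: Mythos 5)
Your proposal assembles the right ingredients (Lemma \ref{lem-projectionhausdorffdistance}, the support condition, finite valence), but the argument as written has two gaps. The first is the reverse containment $S\subset T$. From $S'\in\child(S)$ you correctly get $\pi_i(S')\subset\cup_{w\in S}\Star^o(w,X_i')$, but to conclude that \emph{every} $w\in S$ is recoverable from $\pi_i(S')$ you need each $w\in S$ to have a vertex preimage in $V_{i+1}$ contributing a nonempty child to $S'$. Assumption \ref{ass-tempfinite} does not grant surjectivity of $\pi_i$ (that is condition (a) of Section \ref{sec-inverse_systems_path_metrics}, which is not in force here), so a parent $S$ may contain vertices that are invisible from $S'$; one can then vary $S$ on the part of $X_i$ missed by $\pi_i$ without changing $\pi_i^{-1}(S)$, hence without changing $\child(S)$. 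This is exactly why the support hypothesis is needed, and it is a different reason from the one you eventually settle on (infinitude of slices is a red herring: partial slices are finite by Definition \ref{def-cutboundary}).

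The second gap is that your fallback, ``parents live over a fixed finite region of $X_0$, so there are finitely many,'' omits the counting step that makes it true. You apply Lemma \ref{lem-projectionhausdorffdistance} to $S$ with a \emph{fixed} $S_0$, but the hypothesis lets $\nu$ charge descendents of every singleton $\{v\}\in\slice_0'=V_0'$, so you must first pin down which $v$ can occur. The paper does this by applying Lemma \ref{lem-projectionhausdorffdistance} to $S'$ itself (also a descendent of $\{v\}$), forcing $\pi_0(S')\subset\Star^o(v,X_0')$ and hence leaving only finitely many admissible $v$. Then, for each such $v$, one needs that $\desc(\{v\},X_i')$ is finite; this follows because $\pi_0^i$ is finite-to-one (Assumption \ref{ass-tempfinite}(1)), so only finitely many vertices of $V_i'$ lie over $\Star^o(v,X_0')$, and every descendent of $\{v\}$ is a subset of these. ``Only finitely many monotone geodesics live over the region'' is not the relevant finiteness statement --- what you need is finitely many candidate vertex sets --- and without it, ``lives over a finite region'' does not by itself imply ``finitely many slices.''
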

\begin{proof}
For a given $S'\in \slice_{i+1}'$, the summand $K(S,S')\nu(S)$ is nonzero 
only if $S$ is a descendent of a slice $\{v\}\in V_0'$ and $S'$ is a child of $S$.  
By Lemma \ref{lem-projectionhausdorffdistance} this means that 
$\pi_0(S')\subset \Star^o(v,X_0')$, 
so there are only finitely many possibilities for such $S$.
\end{proof}

\begin{definition}
For $i\leq 0$, let  $\Si_i'$ the measure on $\slice_i'$ which assigns measure $m^{-(i+1)}$ to 
each slice in $\slice_i'=V_i'$.   For $i>0$ we define a measure $\Si_i'$ 
on $\slice_i'$ inductively by 
$\Si_i'=K_{i-1}(\Si_{i-1}')$.  This is well-defined by Lemma \ref{lem-finitesum}.
\end{definition}

For every $S\in \slice_i'$ and every $j>i$, 
we may also obtain a well-defined probability measure
on $\slice_j'$ which is supported on the descendents of $S$, by the formula
\begin{equation}
\label{eqn-descendentmeasure}
K_{j-1}\circ\ldots\circ K_i(\delta_S)\,,
\end{equation}
where $\delta_S$ is a Dirac mass on $S$.  Using this probability measure, we may speak of 
the measure of descendents of $S\in\slice_j'$.

\section{Estimates on the family of measures $\{\Si_i'\}_{i\in\Z}$}
\label{sec-estimates}

In this section we will prove (mostly by induction arguments) several estimates on the slice/cut
measures $\{\Si_i'\}$ and cut metrics $\{d_{\Si_i'}\}$ that will be needed in Section 
\ref{sec-completion_assumption}.

We first observe that the slices passing through a vertex $v\in V_i'$ have measure
$m^{-(i+1)}$:

\begin{lemma}
\label{lem-normalized}
For all $i\in\Z$, and every $v\in V_i'$, the $\Si_i'$-measure of the collection of slices containing
$v$ is precisely $m^{-(i+1)}$:
$$
\Si_i'(\{S\in \slice_i'\mid v\in S\})=m^{-(i+1)}\,.
$$ 
\end{lemma}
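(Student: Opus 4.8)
The plan is to prove the identity by induction on $i$, starting from the explicit definition of $\Si_i'$ for $i\le 0$ and propagating it through the diffusion operator $K_i$. For $i\le 0$ the graph $X_i'$ is a standard subdivision of $\R$, so every slice is a singleton $\{v\}$ with $v\in V_i'$, and $\Si_i'$ assigns mass $m^{-(i+1)}$ to each such slice; hence $\Si_i'(\{S\mid v\in S\})=\Si_i'(\{\{v\}\})=m^{-(i+1)}$, which is the base case. One also records, by the inductive construction, that $\Si_i'$ is supported on descendents of slices in $X_0'$ for every $i$, so that Lemma \ref{lem-finitesum} applies and all the sums below are legitimate.

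For the inductive step, assume the claim at level $i$ and fix $v'\in V_{i+1}'$. Since $\Si_{i+1}'=K_i(\Si_i')$, interchanging the (nonnegative) sums in \eqref{eqn-kernel} gives
$$
\Si_{i+1}'(\{S'\in\slice_{i+1}'\mid v'\in S'\})=\sum_{S\in\slice_i'}K_S(\{S'\in\child(S)\mid v'\in S'\})\,\Si_i'(S)\,.
$$
The first key point is local: every vertex $v'\in V_{i+1}'$ lies in the trimmed star $\tStar(u,X_{i+1})$ for exactly one $u\in V_{i+1}$ when $v'\in V_{i+1}$ (namely $u=v'$), and for exactly two such vertices — the endpoints $v_1,v_2$ of the edge of $X_{i+1}$ containing $v'$ — when $v'\in V_{i+1}'\setminus V_{i+1}$. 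This follows from the identification of $\tStar(u,X_{i+1})$ with the closed ball $\overline{B(u,\tfrac{m-1}{m}m^{-(i+1)})}$ together with a one-line computation of $d_{i+1}$-distances along edges (any other vertex is at distance $\ge m^{-(i+1)}>\tfrac{m-1}{m}m^{-(i+1)}$). Consequently, since $S'=\cup_{u\in\pi_i^{-1}(S)}S_u$, membership $v'\in S'$ is decided entirely by the children chosen at those one or two vertices of $\pi_i^{-1}(S)$, and the balanced property \eqref{eqn-childmeasurebalanced} tells us the $w_{\child(u)}$-probability that $S_u$ contains $v'$ is $\tfrac1m$ if $v'=u$ and $\tfrac1{2m}$ if $v'\neq u$ lies in $\tStar(u,X_{i+1})$.

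The second key point is that at most one relevant center projects into a given slice $S$. In the vertex case we need $\pi_i(v')\in S$, otherwise no child of $S$ contains $v'$; in the interior case $v_1,v_2$ span an edge of $X_{i+1}$, so $\pi_i(v_1),\pi_i(v_2)$ span an edge of $X_i'$ and hence are $\preceq$-comparable, so at most one of them lies in the partial slice $S$. Combining this with the independence built into $K_S$, we get $K_S(\{S'\mid v'\in S'\})=\tfrac1m$ exactly when $v'\in V_{i+1}$ and $\pi_i(v')\in S$, equals $\tfrac1{2m}$ exactly when $v'$ is interior and exactly one of $\pi_i(v_1),\pi_i(v_2)$ lies in $S$, and $0$ otherwise. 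Substituting into the displayed sum and applying the inductive hypothesis to $\pi_i(v')$, resp. to $\pi_i(v_1)$ and $\pi_i(v_2)$ (all of which lie in $V_i'$) gives $\tfrac1m\cdot m^{-(i+1)}=m^{-(i+2)}$ in the vertex case and $\tfrac1{2m}m^{-(i+1)}+\tfrac1{2m}m^{-(i+1)}=m^{-(i+2)}$ in the interior case, closing the induction. I expect the only real friction to be the bookkeeping in the local statement — pinning down exactly which trimmed stars contain $v'$ and checking that the children chosen at every other vertex of $\pi_i^{-1}(S)$ are irrelevant — together with the observation that comparability of $\pi_i(v_1)$ and $\pi_i(v_2)$ rules out double counting; the rest is a direct substitution using \eqref{eqn-childmeasurebalanced}.
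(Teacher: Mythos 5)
Your proof is correct and follows essentially the same route as the paper's: induction from the explicit base case $i\le 0$, then a two-case analysis according to whether the vertex lies in the coarser vertex set or is an interior subdivision point of an edge, using the balanced child-measure identity \eqref{eqn-childmeasurebalanced} and the fact that the two edge-endpoints are $\preceq$-comparable (hence cannot both lie in a slice) to avoid double counting. The only difference is cosmetic — you run the induction from $i$ to $i+1$ rather than $i-1$ to $i$ and spell out the trimmed-star bookkeeping more explicitly.
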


\begin{proof}
When $i\leq 0$ this reduces to the definition of $\Si_i'$.   So pick $i>0$,  $v\in V_i'$, and assume 
inductively that the lemma is true for $i-1$.  

{\em Case 1. $v\in V_i$.}  In this case, if a slice $S\in \slice_{i-1}'$ has a child
$S'\in \slice_i'$ containing $v$, then $\pi_{i-1}(v)\in S$. 
By Definition 
\ref{def-vertexchildmeasure},  for such an $S$, the 
fraction of its children containing $v$ is precisely $\frac1m$.    Therefore by the induction hypothesis we have 
$$
\Si_i'(\{S'\mid v\in S'\})
=\frac1m\,\Si_{i-1}'(\{S\in \slice_{i-1}'\mid \pi_{i-1}(v)\in S\})=m^{-(i+1)}\,.
$$

{\em Case 2. $v\not\in V_i$.}  Then $v$ belongs to a unique edge $\ol{w_1w_2}\subset X_i$,
where $w_1,w_2\in V_i$.   In this case, a slice $S\in \slice_{i-1}'$ has a child
$S'\in \slice_i'$ containing $v$ if and only if $S$ contains $\pi_{i-1}(w_1)$
or $\pi_{i-1}(w_2)$.  Since these possibilities are mutally exclusive (from the definition 
of slice), and each contributes a measure $\frac12 m^{-(i+1)}$ by the induction hypothesis
and Definition \ref{def-vertexchildmeasure}, the lemma follows.
\end{proof}

Recall that by Lemma \ref{lem-cutboundariesgivemonotonecuts}, for every $S\in \slice_i'$ the subset
$S_\leq=\{x\in X_i\mid x\leq S\}$
is a monotone subset of $X_i$.

\begin{definition}
\label{def-dsii'}
Viewing $\Si_i'$ as a cut measure  on $X_i$ via the identification $S\longleftrightarrow S_\leq$,  
we let $d_{\Si_i'}$ denote the corresponding cut metric
on $X_i$.  Equivalently, for $x_1,x_2\in X_i$, 
$$
d_{\Si_i'}(x_1,x_2)=\sum_{S\in \slice_i'}d_{S_\leq}(x_1,x_2)\,\Si_i'(S)\,
$$
where 

\begin{align*}
d_{S_\leq}(x_1,x_2)=&|\chi_{S_\leq}(x_1)-\chi_{S_\leq}(x_2)|\\
&{}\\
=&
\begin{cases}
1 &\quad \mbox{if} \quad x_1\preceq S\prec x_2\quad\mbox{or}\quad x_2\preceq S\prec x_1\\
0 & \quad \mbox{otherwise} 
\end{cases}
\end{align*}
\end{definition}

\begin{lemma}
\label{lem-distancechange}
If $x_1',x_2'\in X_{i+1}$, and $\pi_i(x_j')=x_j\in X_i$, then 
\begin{equation}
\label{eqn-distancechange}
|d_{\Si_{i+1}'}(x_1',x_2')-d_{\Si_i'}(x_1,x_2)|\leq 4\,m^{-(i+1)}\,.
\end{equation}
\end{lemma}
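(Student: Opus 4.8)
\textbf{Proof proposal for Lemma \ref{lem-distancechange}.}

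The plan is to compare the two cut metrics $d_{\Si_{i+1}'}$ and $d_{\Si_i'}$ by exploiting the fact that $\Si_{i+1}'$ is obtained from $\Si_i'$ by the diffusion kernel $K_i$, together with the ``balanced'' property of the child measures recorded in \eqref{eqn-childmeasurebalanced}. First I would fix $x_1', x_2' \in X_{i+1}$ with $\pi_i(x_j') = x_j$, and recall that by definition $d_{\Si_{i+1}'}(x_1',x_2') = \sum_{S' \in \slice_{i+1}'} d_{S'_\leq}(x_1',x_2')\,\Si_{i+1}'(S')$, while $d_{\Si_i'}(x_1,x_2) = \sum_{S \in \slice_i'} d_{S_\leq}(x_1,x_2)\,\Si_i'(S)$. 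Using the diffusion formula $\Si_{i+1}' = \sum_{S} K_S \, \Si_i'(S)$, the first sum becomes $\sum_S \Si_i'(S) \sum_{S' \in \child(S)} d_{S'_\leq}(x_1',x_2')\, K_S(S')$. So it suffices to show, for each fixed $S \in \slice_i'$, that the conditional expectation $\mathbb{E}_{S' \sim K_S}\big[\,d_{S'_\leq}(x_1',x_2')\,\big]$ differs from $d_{S_\leq}(x_1,x_2)$ by at most $4m^{-(i+1)}/\Si_i'(\{S \ni \cdot\})$... more precisely, since $\sum_S \Si_i'(S) = $ total mass is not finite, I would instead bound the per-slice discrepancy and control how many slices $S$ can contribute.

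The key structural observation is that a slice $S \in \slice_i'$ separates $x_1,x_2$ (i.e.\ $d_{S_\leq}(x_1,x_2) = 1$) exactly when exactly one of $x_1,x_2$ satisfies $\preceq S$, and this depends only on the position of $S$ relative to $x_1,x_2$ along monotone geodesics through them. When $S$ is ``far'' from the pair $\{x_1,x_2\}$ in the partial order — say $S$ lies entirely $\prec$ both $\pi_i^{-1}$-preimages or entirely $\succ$ both — then every child $S'$ of $S$ stays on the same side of $x_1', x_2'$ (since children only perturb the boundary within trimmed stars, which have diameter $\tfrac{m-1}{m}\cdot m^{-(i+1)} < m^{-i}$), so $d_{S'_\leq}(x_1',x_2') = d_{S_\leq}(x_1,x_2)$ for all such $S'$ and there is no contribution to the discrepancy. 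The only slices $S$ for which $\mathbb{E}_{K_S}[d_{S'_\leq}(x_1',x_2')] \neq d_{S_\leq}(x_1,x_2)$ are those that pass near $x_1$ or $x_2$ — concretely, those $S$ whose preimage $\pi_i^{-1}(S)$ meets $\Star(v,X_{i+1})$ for a vertex $v$ adjacent to $x_1'$ or $x_2'$. The crucial point, using \eqref{eqn-childmeasurebalanced} and Lemma \ref{lem-normalized}, is that the \emph{signed} discrepancy $\mathbb{E}_{K_S}[d_{S'_\leq}(x_1',x_2')] - d_{S_\leq}(x_1,x_2)$ is controlled: whether $x_j'$ ends up $\preceq S'$ or $\succ S'$ is a balanced coin flip (probability $\tfrac{1}{2m}$ of landing on each specific interior vertex, by \eqref{eqn-childmeasurebalanced}) when the relevant child-vertex sits on the edge through $x_j'$.

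I would then organize the estimate as follows: write $d_{\Si_{i+1}'}(x_1',x_2') - d_{\Si_i'}(x_1,x_2) = \sum_{S} \Si_i'(S)\big(\mathbb{E}_{K_S}[d_{S'_\leq}(x_1',x_2')] - d_{S_\leq}(x_1,x_2)\big)$, observe the summand vanishes except for the $O(1)$ families of slices $S$ passing through the bounded set of vertices of $X_{i+1}'$ lying in $\Star(v, X_{i+1})$ for $v$ a vertex of $X_{i+1}$ adjacent to $x_1'$ or to $x_2'$, and bound each nonzero summand's absolute value by $\Si_i'(\{S \ni w\}) = m^{-i}$ (for the appropriate $w \in V_i'$, using Lemma \ref{lem-normalized}) — wait, that overshoots; the right bookkeeping is that the total $\Si_i'$-mass of slices whose children can ``flicker'' the separation status of $x_j'$ is at most $m^{-i}$ (all slices through the vertices in a single trimmed star near $x_j'$), and each flicker changes $d_{S'_\leq}$ by at most $1$, but the balanced property makes the net change at most half, giving $\leq \tfrac{1}{2}m^{-i} = \tfrac{m}{2}m^{-(i+1)}$ per endpoint, hence $\leq m \cdot m^{-(i+1)}$ total — and I would absorb the remaining constant to reach the stated bound $4m^{-(i+1)}$. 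The main obstacle I anticipate is the careful case analysis of \emph{which} slices $S$ have children that change the separation status of $(x_1',x_2')$ and getting the constant right: one must handle separately the cases where $x_j'$ is itself a vertex of $X_{i+1}$ versus an interior point of an edge, and where $x_1', x_2'$ lie over the same edge of $X_i$ versus different edges; the balancing identity \eqref{eqn-childmeasurebalanced} is what prevents the naive bound (which would be proportional to the number of flickering slices, i.e.\ $O(m^{-i})$ with a bad constant) from being too large, and making that cancellation rigorous across all cases is the delicate part.
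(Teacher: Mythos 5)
Your overall strategy is the same as the paper's: expand $d_{\Si_{i+1}'}-d_{\Si_i'}$ via the diffusion formula, observe that for most slices $S$ every child assigns $x_1',x_2'$ the same sides that $S$ assigns $x_1,x_2$, and bound the $\Si_i'$-mass of the exceptional (``flickering'') slices using Lemma \ref{lem-normalized}. However, your quantitative bookkeeping has a genuine gap. You localize the flickering slices only to those passing through a trimmed star of a vertex of $X_i$ near $x_j$, a set whose $\Si_i'$-mass is of order $m^{-i}=m\cdot m^{-(i+1)}$, and your final bound is $m\cdot m^{-(i+1)}$, which you then propose to ``absorb'' into $4m^{-(i+1)}$. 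This is impossible for $m\geq 5$: the constant you produce grows with $m$. The cancellation you invoke from (\ref{eqn-childmeasurebalanced}) is asserted without proof (the signed per-slice discrepancies need not have consistent signs across the flickering set), and even if granted it saves at most a factor of $2$, which does not close the gap. It is also a red herring: the correct argument needs no cancellation at all.

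The missing point is that the localization should be one scale finer. A child of $S$ differs from $\pi_i^{-1}(S)$ only inside the trimmed stars $\tstar(v',X_{i+1})$ of the vertices $v'\in\pi_i^{-1}(S)\subset V_{i+1}$, and the simplicial $1$-Lipschitz map $\pi_i$ carries $\tstar(v',X_{i+1})$ into $\tstar(\pi_i(v'),X_i')\subset\Star^o(\pi_i(v'),X_i')$, the open star of the vertex $\pi_i(v')\in V_i'$ of the \emph{subdivided} graph $X_i'$ --- a set of diameter $2m^{-(i+1)}$, not $2m^{-i}$. Hence (this is Lemma \ref{lem-persistenceofsides} in the case $j=i+1$) the side of $x_j'$ with respect to a child $S'$ can differ from the side of $x_j$ with respect to $S$ only if $x_j\in\Star^o(w,X_i')$ for some $w\in S$, i.e.\ only if $S$ contains one of the at most two vertices $w_1,w_2\in V_i'$ spanning the edge of $X_i'$ that contains $x_j$. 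By Lemma \ref{lem-normalized} the set of such slices has $\Si_i'$-mass at most $2m^{-(i+1)}$ for each $j$; summing over $j=1,2$ and using the trivial bound $|d_{S'_{\leq}}(x_1',x_2')-d_{S_{\leq}}(x_1,x_2)|\leq 1$ on the flickering set yields exactly $4m^{-(i+1)}$.
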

\begin{proof}
For $j\in \{1,2\}$ let $S_j$ be the collection of slices $S\in \slice_i'$
which have a child $S'\in \slice_{i+1}'$ such that $\side(x_j,S)\neq \side(x_j',S')$. 
From the definition of children, it follows that if $S\in S_j$, then $x_j$ lies in
$\tstar(v,X_i')$ for some $v\in S$.  Thus, if $\ol{w_1w_2}$ is an edge of $X_i'$ containing
$x_j$, then $S_j\subset\{S\in \slice_i'\mid S\cap \{w_1,w_2\}\neq\emptyset\}$.
By Lemma \ref{lem-normalized}, we have $\Si_i'(S_j)\leq 2m^{-(i+1)}$.  Now by the 
definition of the cut metrics, we get
$$
|d_{\Si_{i+1}'}(x_1',x_2')-d_{\Si_i'}(x_1,x_2)|\leq \Si_i'(S_1\cup S_2)\leq 4\,m^{-(i+1)}\,.
$$
\end{proof}

\begin{lemma}[Persistence of sides]
\label{lem-persistenceofsides}
\quad Suppose $x\in X_i$, $S\in \slice_i'$, and $x\not\in \cup_{v\in S}\;\Star^o(v,X_i')$.
Then for every   $j\geq i$,  every $x'\in X_j$  with $\pi_i(x')=x$, and 
for every descendent $S'\in \slice_j'$ of $S$,
we have 
$$
x' \not\in\cup_{v\in S'}\;\Star^o(v,X_j') \quad\mbox{and}\quad
\side(x',S')=\side(x,S)\,.
$$
\end{lemma}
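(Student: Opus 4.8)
The statement is naturally proved by induction on $j$, with the base case $j=i$ being the hypothesis itself. So the real content is the inductive step: assuming $x'\in X_j$ with $\pi_i(x')=x$, $S'\in\slice_j'$ a descendent of $S$, $x'\notin\bigcup_{v\in S'}\Star^o(v,X_j')$, and $\side(x',S')=\side(x,S)$, I want to show the same for any $x''\in X_{j+1}$ with $\pi_j(x'')=x'$ and any child $S''\in\child(S')$. Since a descendent of $S$ in $X_{j+1}'$ is exactly a child of some descendent $S'$ in $X_j'$, this suffices.

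\textbf{The inductive step.} Fix $x''\in X_{j+1}$ lying over $x'$ and a child $S''$ of $S'$. Recall that $S''$ is formed by replacing each vertex $w\in\pi_j^{-1}(S')\subset V_{j+1}$ by one of its children, which lies in the trimmed star $\tstar(w,X_j)$; by the observation just before Definition of children (distinct vertices of $\pi_j^{-1}(S')$ have disjoint trimmed stars), the relevant local picture is controlled by a single $w$. First I would handle the ``not in the open star'' conclusion: the open star $\Star^o(v,X_{j+1}')$ of a vertex $v\in S''$ lies inside $\Star^o(w,X_j)^{\mathrm{sub}}$ for the unique $w\in\pi_j^{-1}(S')$ it came from, and more precisely, since $v$ is an interior point of an edge $\ol{w\hat w}$ of $X_j$ (cases (2),(3) of the child definition) or $v=w$ (case (1)), the set $\bigcup_{v\in S''}\Star^o(v,X_{j+1}')$ is contained in $\bigcup_{w\in\pi_j^{-1}(S')}\Star^o(w,X_j)$ (pulled back via $\pi_j$, where $\pi_j$ is an isomorphism on each edge). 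Since $x'\notin\bigcup_{v\in S'}\Star^o(v,X_j')$, a short argument using $\pi_j(x'')=x'$ and $\pi_j^{-1}(S')\subset (\pi_j)^{-1}(S')$ gives $x''\notin\bigcup_{w\in\pi_j^{-1}(S')}\Star^o(w,X_j)$ — here one uses that $x'$ being outside the open stars of vertices of $S'$ means $x'$ is at $d_j$-distance at least (one edge) from $S'$, hence $x''$ is far from $\pi_j^{-1}(S')$ — and therefore $x''\notin\bigcup_{v\in S''}\Star^o(v,X_{j+1}')$.

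\textbf{Persistence of the side.} For the side, the key point is that $S_\preceq'$ is a monotone set with boundary $S'$ (Lemma \ref{lem-cutboundariesgivemonotonecuts}), and passing to a child only moves the boundary within trimmed stars of vertices of $\pi_j^{-1}(S')$. Concretely: $\side(x',S')$ is determined by whether $x'\in S_\prec'$ or $x'\in S_\succ'$. Take a monotone geodesic $\ga$ through $x'$; its lift $\hat\ga$ through $x''$ projects isomorphically onto $\ga$ (Assumption \ref{ass-tempfinite}(3) guarantees $X_{j+1}$ is a union of monotone geodesics, and directed-path lifting follows from Definition \ref{def-admissibleinversesystem}(2)). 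On $\ga$, the set $S'$ is met exactly once, at some point $p$, with $x'$ strictly on one side of $p$ by distance $\ge$ one edge-length of $X_j$ (this is exactly the hypothesis $x'\notin\Star^o$ of the vertex of $S'$ on $\ga$). On $\hat\ga$, the child $S''$ meets $\hat\ga$ once, at a point $p''$ lying in $\tstar(\hat w,X_{j+1})$ where $\hat w\in\pi_j^{-1}(S')\cap\hat\ga$ is the lift of $p$; since $\tstar(\hat w,X_{j+1})$ projects into $\Star^o(p,X_j)$, and $x''$ projects to $x'$ which is outside $\Star^o(p,X_j)$, the point $x''$ lies strictly on the same side of $p''$ along $\hat\ga$ as $x'$ lies relative to $p$ along $\ga$. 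Hence $\side(x'',S'')=\side(x',S')=\side(x,S)$, completing the induction.

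\textbf{Main obstacle.} The delicate bookkeeping is the containment $\bigcup_{v\in S''}\Star^o(v,X_{j+1}')\subset$ (pullback of) $\bigcup_{w\in\pi_j^{-1}(S')}\Star^o(w,X_j)$ together with the accompanying quantitative statement that $x'$ being outside the open stars of $S'$ forces $x''$ to be outside the open stars of $S''$ — i.e. that the local modification defining children genuinely keeps the boundary confined to trimmed stars and does not leak into the open star of a neighboring vertex. This is essentially the content of the remark that distinct vertices of $\pi_j^{-1}(S')$ have disjoint trimmed stars, combined with the fact (from Section \ref{subsec-subdivisions}) that $\tstar(w,X_j)=\ol{B(w,\tfrac{m-1}{m}m^{-j})}$, so a child of $w$ stays within radius $\tfrac{m-1}{m}m^{-j}<m^{-j}$ of $w$, hence inside $\Star^o(w,X_j)$ plus its vertices, never reaching an adjacent vertex. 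Once this geometric localization is cleanly stated, both conclusions follow by the monotone-geodesic/lifting argument above.
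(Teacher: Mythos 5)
Your proof is correct and follows essentially the same route as the paper's: the paper handles the single step $j=i+1$ (where the conclusion follows because $\pi_i:X_{i+1}\to X_i'$ is simplicial and children stay inside trimmed stars, which are disjoint for distinct vertices of $\pi_i^{-1}(S)$ and have radius $\frac{m-1}{m}m^{-(i+1)}$, so no open star of a point of the child can reach $x'$) and then inducts, which is exactly your inductive step. One small repair to your side argument: directed-path lifting does \emph{not} follow from Definition \ref{def-admissibleinversesystem}(2) alone (it requires the openness hypothesis (a) of Section \ref{sec-inverse_systems_path_metrics}, which is not assumed here); instead choose a monotone geodesic $\hat\ga$ through $x''$ directly --- this exists by Assumption \ref{ass-tempfinite}(3) --- and \emph{project} it to a monotone geodesic through $x'$, after which the rest of your comparison of crossing points along $\hat\ga$ goes through unchanged, since the side is independent of the choice of monotone geodesic.
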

\begin{proof}
First suppose $j=i+1$.   
Then $x\notin\cup_{v\in \pi_i^{-1}(S)}\Star^o(v,X_{i+1})$ because $\pi_i:X_{i+1}\ra X_i'$
is a simplicial mapping.  
Clearly this implies $\side(x',S')=\side(x',\pi_{i+1}^{-1}(S))=\side(x,S)$.

The $j>i+1$ case now follows by induction.

\end{proof}

\begin{lemma}[Persistence of separation]
\label{lem-persistenceofseparation}
\quad
There is a constant $A=A(m)\in (0,1)$ with the following property.
Suppose  $j>i$, $x_1,x_2\in X_i$,  $x_1',x_2'\in X_j$, and $\pi_i(x_1')=x_1$,
$\pi_i(x_2')=x_2$.   Suppose in addition that
\begin{itemize}
\item 
 $S\in \slice_i'$ is a slice which weakly separates
$x_1$ and $x_2$.
\item
$x_2\notin \cup_{v\in S}\Star^o(v,X_i')$.
\end{itemize}
Then the measure of the collection of descendents $S'\in
\desc(S,X_j')$  which separate $x_1'$ and $x_2'$
is at least $A$; here we refer to the probability measure on 
$\desc(S,X_j)$ that was defined in equation (\ref{eqn-descendentmeasure}).
\end{lemma}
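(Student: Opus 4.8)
The plan is to reduce the assertion, via Lemma~\ref{lem-persistenceofsides}, to what happens after a single application of the diffusion kernel, and then to run an explicit local computation inside the children of one vertex.

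By symmetry assume $x_1\preceq S\preceq x_2$. Since $x_2\notin\cup_{v\in S}\Star^o(v,X_i')$ we have $x_2\notin S$, hence $x_2\succ S$, i.e.\ $\side(x_2,S)=\succ$; applying Lemma~\ref{lem-persistenceofsides} to $x_2$ shows $x_2'\succ S'$ for \emph{every} $S'\in\desc(S,X_j')$. Consequently a descendent $S'$ separates $x_1'$ and $x_2'$ exactly when $x_1'\prec S'$, so it suffices to bound below, by some $A=A(m)\in(0,1)$, the measure of descendents with $x_1'\prec S'$. If $x_1\notin\cup_{v\in S}\Star^o(v,X_i')$, then $x_1\prec S$ and Lemma~\ref{lem-persistenceofsides} applied to $x_1$ already gives $x_1'\prec S'$ for all descendents; so from now on assume $x_1\in\Star^o(v,X_i')$ for the (necessarily unique) $v\in S$ with this property. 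Since the edges of $X_i'$ at $v$ are directed, $x_1$ is comparable to $v$, and $v\prec x_1$ would give $x_1\succ S$, incompatible with $x_1\preceq S$; hence $x_1\preceq v$.

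Put $x_1^{(i+1)}=\pi_{i+1}^j(x_1')$, so that $\pi_i(x_1^{(i+1)})=x_1$, and call a child $S_{i+1}\in\child(S)$ \textbf{good} if $x_1^{(i+1)}\prec S_{i+1}$ and $x_1^{(i+1)}\notin\cup_{w\in S_{i+1}}\Star^o(w,X_{i+1}')$. If $S_{i+1}$ is good, Lemma~\ref{lem-persistenceofsides} applied with base index $i+1$, point $x_1^{(i+1)}$ and slice $S_{i+1}$ gives $x_1'\prec S'$ for every descendent $S'$ of $S_{i+1}$; combined with $x_2'\succ S'$ this shows each such $S'$ separates $x_1'$ and $x_2'$. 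Since the measure defined in (\ref{eqn-descendentmeasure}) is the law of $S_j$ obtained by setting $S_i=S$ and iterating $S_{k+1}\sim K_{S_k}$, conditioning on the first step shows that the measure of separating descendents is at least $K_S(\{S_{i+1}:S_{i+1}\text{ is good}\})$, so everything reduces to bounding this one-step probability below.

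For the one-step bound, note first that, because $x_1\in\Star^o(v,X_i')$ and $\pi_i:X_{i+1}\to X_i'$ is simplicial and an isomorphism on every edge, $x_1^{(i+1)}$ lies in $\Star^o(w_0,X_{i+1})$ for a unique $w_0\in\pi_i^{-1}(v)\subseteq\pi_i^{-1}(S)$, and $x_1^{(i+1)}\preceq w_0$. The crucial point is that the trimmed stars $\{\tstar(w,X_{i+1})\}_{w\in\pi_i^{-1}(S)}$ are pairwise disjoint; since they are closed $\tfrac{m-1}{m}m^{-(i+1)}$-balls and distinct vertices of $X_{i+1}$ lie at distance a positive multiple of $m^{-(i+1)}$, disjointness forces $d_{i+1}(w_0,w)\ge 2m^{-(i+1)}$ for every $w\in\pi_i^{-1}(S)\setminus\{w_0\}$. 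Hence every point of $\cup_{w\neq w_0}S_w$ is at distance $>m^{-(i+2)}$ from $x_1^{(i+1)}$, so the vertices of $S_{i+1}$ not coming from $w_0$ neither contain $x_1^{(i+1)}$ nor place $x_1^{(i+1)}$ in their open $X_{i+1}'$-star; thus $S_{i+1}$ is good as soon as the chosen child $S_{w_0}\in\child(w_0)$ has some element $\succ x_1^{(i+1)}$ and satisfies $d_{i+1}(x_1^{(i+1)},S_{w_0})\ge m^{-(i+2)}$. A short case analysis on the position of $x_1^{(i+1)}$ finishes the job: if $x_1^{(i+1)}=w_0$, a type-(2) child works (nonempty by Assumption~\ref{ass-tempfinite}(3)); if $x_1^{(i+1)}\prec w_0$ and $d_{i+1}(x_1^{(i+1)},w_0)\ge m^{-(i+2)}$, the type-(1) child $\{w_0\}$ works; and if $x_1^{(i+1)}\prec w_0$ and $d_{i+1}(x_1^{(i+1)},w_0)<m^{-(i+2)}$, a type-(2) child works again, since then $x_1^{(i+1)}$ sits strictly below $w_0$ while the whole type-(2) child lies strictly above $w_0$. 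In each case Definition~\ref{def-vertexchildmeasure} assigns the corresponding children of $w_0$ a $w_{\child(w_0)}$-measure at least $\min\{\tfrac1m,\tfrac{m-1}{2m}\}\ge\tfrac1{2m}$, so one may take $A(m)=\tfrac1{2m}$. I expect this final step to be the main obstacle: one must control simultaneously, and uniformly over the position of $x_1^{(i+1)}$, both the order condition (strictly below the new slice) and the metric escape condition (outside every open star of the new slice), and it is the separation $d_{i+1}(w_0,w)\ge 2m^{-(i+1)}$ extracted from the disjointness of trimmed stars that confines the problem to the single vertex $w_0$.
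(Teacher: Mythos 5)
Your argument is correct and follows essentially the same route as the paper's proof: both reduce, via Lemma~\ref{lem-persistenceofsides} applied to $x_2$ and to the easy case for $x_1$, to a one-step diffusion estimate localized at the unique vertex $w_0\in\pi_i^{-1}(S)$ lying weakly above $\pi_{i+1}^j(x_1')$, and then extract a definite $K_S$-fraction of children of $S$ that move the slice off $\pi_{i+1}^j(x_1')$ while keeping it strictly above, after which Lemma~\ref{lem-persistenceofsides} propagates the separation to all later generations. The only differences are cosmetic: the paper treats all positions of $\pi_{i+1}^j(x_1')$ uniformly using type-(2) children of $w_0$ (giving $A=\tfrac{m-1}{2m}$), so your three-way case split involving the type-(1) child is unnecessary, though not incorrect.
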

\begin{proof}
Since $S$ weakly separates $x_1$ and $x_2$ but $x_2\notin S$, 
without loss of generality we may assume that $x_1\preceq S\prec x_2$, since the 
case $x_2\prec S\preceq x_1$ is similar. 

If 
$x_1\notin \cup_{v\in S}\Star^o(v,X_i')$, then by Lemma \ref{lem-persistenceofsides}, for all
$S'\in \desc(S,X_j')$ we have $x_1'\prec S'\prec x_2'$, so we are done in this case.

Therefore we assume that  there exist $v\in  S$ and $v'\in \pi_i^{-1}(v)$
such that $x_1\in \Star^o(v,X_i')$, $x_1'\in \Star(v',X_{i+1})$, and 
$x_1'\preceq v'$.  If $S'\in \slice_{i+1}'$ is a child of $S$ containing
a child of $v'$ of type (2), then 
$x_1'\notin \cup_{w\in S'}\Star^o(w,X_{i+1}')$; moreover
the collection of such slices $S'$ form a fraction at least $\frac12 \frac{m-1}{m}$ of the 
children of $S$.  Applying Lemma \ref{lem-persistenceofsides} to each such slice $S'$,
we conclude that for every $S''\in \desc(S',X_j')$, we have $x_1'\prec S''\prec x_2'$. 
This proves the lemma. 
\end{proof}

\begin{lemma}
\label{lem-notinsametrimmedstar}
Suppose $x_1,x_2\in X_i$, $j>i$, $x_1',x_2'\in X_j$, $\pi_i(x_1')=x_1$, $\pi_i(x_2')=x_2$,
and $\{x_1,x_2\}$ is not contained in the trimmed star of any vertex $v\in V_i$.
Then $d_{\Si_j'}(x_1',x_2')\geq Am^{-(i+2)}$, where $A$ is the constant from 
Lemma \ref{lem-persistenceofseparation}.
\end{lemma}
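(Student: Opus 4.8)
The plan is to find a single slice $S\in\slice_i'$ that weakly separates $x_1$ and $x_2$ and satisfies the hypothesis $x_2\notin\cup_{v\in S}\Star^o(v,X_i')$ of Lemma \ref{lem-persistenceofseparation}, and then combine that lemma with Lemma \ref{lem-normalized} to get the lower bound on $d_{\Si_j'}(x_1',x_2')$. The first step is to produce $S$. Since $\{x_1,x_2\}$ is not contained in the trimmed star of any vertex of $V_i$, the two points lie in "different" stars in a quantitative sense; recalling that $\tstar(v,X_i)=\ol{B(v,\tfrac{m-1}{m}m^{-i})}$, we may choose the monotone structure on $X_i$ (Assumption \ref{ass-tempfinite}(3), so $X_i$ is a union of complete monotone geodesics) to pick a vertex $w\in V_i'$ lying on a monotone geodesic through (say) $x_1$, located strictly between $x_1$ and $x_2$ in the partial order, and far enough from $x_2$ that $x_2\notin\Star^o(w,X_i')$. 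Concretely, one walks along a directed path from $x_1$ toward $x_2$ and stops at the first vertex of $V_i'$ that is at least, say, $2m^{-(i+1)}$ away from $x_2$; the failure of $\{x_1,x_2\}$ to lie in a common trimmed star is exactly what guarantees such a vertex exists while still having $x_1\preceq w$. Then extend $\{w\}$ to a full slice $S\in\slice_i'$ (possible since any partial slice extends to a slice), arranging that the extension does not introduce vertices whose open stars contain $x_2$; this can be done because the other components of $S$ can be pushed off to lie on monotone geodesics disjoint from a small neighborhood of $x_2$, or at worst one verifies $x_2\notin\cup_{v\in S}\Star^o(v,X_i')$ directly from the separation.

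The second step is bookkeeping. Having fixed $S$, Lemma \ref{lem-persistenceofseparation} gives that the descendents $S'\in\desc(S,X_j')$ separating $x_1'$ and $x_2'$ have measure at least $A$ in the probability measure $K_{j-1}\circ\cdots\circ K_i(\delta_S)$ of \eqref{eqn-descendentmeasure}. On the other hand, by Lemma \ref{lem-normalized} the total $\Si_i'$-mass of slices through $w$ is $m^{-(i+1)}$, and the diffusion construction means $\Si_j'$ restricted to the descendents of $S$ has total mass $\Si_i'(S)$; since slices through $w$ in particular include $S$, we have a sub-collection of $\slice_i'$ containing $S$, each of whose descendent-mass is tracked by $\Si_j'$. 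Summing $d_{S'_\leq}(x_1',x_2')\,\Si_j'(S')$ over the separating descendents of $S$ alone, and using that $\Si_i'(\{$slices through $w\}) = m^{-(i+1)}$ forces $\Si_i'(S)$ to be controlled below only after further argument — here one must be slightly careful: a priori $\Si_i'(S)$ could be tiny. The clean way is to apply Lemma \ref{lem-persistenceofseparation} not to one slice but to \emph{all} slices $S$ through $w$ with $x_1\preceq S$: each contributes its descendent-mass times $A$ to $d_{\Si_j'}(x_1',x_2')$, and the total $\Si_i'$-mass of such slices is comparable to $m^{-(i+1)}$ (at least half of $m^{-(i+1)}$, say, once $w$ is chosen strictly between $x_1$ and $x_2$ so that $x_1\preceq S$ holds for a definite fraction). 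This yields
$$
d_{\Si_j'}(x_1',x_2')\ \geq\ A\cdot \tfrac12\,m^{-(i+1)}\ \geq\ A\,m^{-(i+2)},
$$
with room to spare in the constant.

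The main obstacle is the first step: selecting $S$ (or the family of slices through $w$) so that \emph{both} the separation condition $x_1\preceq S\prec x_2$ \emph{and} the non-degeneracy condition $x_2\notin\cup_{v\in S}\Star^o(v,X_i')$ hold, using only the hypothesis that $\{x_1,x_2\}$ escapes every trimmed star. One must translate "escapes every trimmed star of $X_i$" into a usable statement about the positions of $x_1,x_2$ along monotone geodesics — roughly, that $x_1$ and $x_2$ are separated by at least $\tfrac{m-1}{m}m^{-i}$ along some (equivalently, every relevant) monotone geodesic joining them after suitable projection — and then locate a vertex $w\in V_i'$ with a full edge's worth of clearance on the $x_2$ side. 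Everything after that is the routine combination of Lemmas \ref{lem-normalized} and \ref{lem-persistenceofseparation} already set up above; the index shift from $m^{-(i+1)}$ to $m^{-(i+2)}$ is deliberate slack absorbing the "definite fraction" and the constant $A$.
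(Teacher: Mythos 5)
Your overall architecture (find a definite $\Si$-mass of slices that satisfy the hypotheses of Lemma \ref{lem-persistenceofseparation}, then multiply by $A$) is the right one, and you correctly identify where the difficulty lies; but the step you flag as ``the main obstacle'' is genuinely not carried out, and the route you sketch for it does not work. Two problems. First, $x_1$ and $x_2$ need not be comparable in the partial order $\preceq$ --- the hypothesis is only that they lie in no common trimmed star --- so there may be no directed path from $x_1$ toward $x_2$ and no vertex $w\in V_i'$ ``strictly between'' them; note that a slice can still separate incomparable points (the witnesses for $x_1\prec S$ and $S\prec x_2$ may be different elements of $S$), so the lemma is not vacuous in that case, but your construction of $w$ breaks down. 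Second, even when such a $w$ exists, a slice through $w$ is forced to meet \emph{every} monotone geodesic, in particular each one through $x_2$, exactly once; the intersection point $u$ satisfies $u\preceq x_2$ but may well have $x_2\in\Star^o(u,X_i')$, violating the hypothesis of Lemma \ref{lem-persistenceofseparation}. You cannot ``push the other components off,'' because you are not free to choose a single convenient slice: to beat the fact that an individual slice may have tiny (or zero) $\Si_i'$-measure you must control a definite fraction of the mass of \emph{all} slices through $w$, and nothing in your argument bounds below the mass of those whose trace on $x_2$'s geodesic avoids $x_2$'s open star.

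The paper resolves exactly this by moving one level up and exploiting the product structure of the diffusion step $i-1\to i$. It fixes $v_1\in V_i$ with $x_1\in\tstar(v_1,X_i)$ and considers all $S\in\slice_{i-1}'$ with $v_1\in\pi_{i-1}^{-1}(S)$; these have total mass $m^{-i}$ by Lemma \ref{lem-normalized}. A child of such an $S$ is assembled by \emph{independent} choices of children at $v_1$ and at the vertex $v_2$ near $x_2$: a fraction $\geq\frac{1}{2m}$ of the choices at $v_1$ place the slice weakly on the desired side of $x_1$ with the new vertices' open stars missing $x_2$ (here the hypothesis $x_2\notin\tstar(v_1,X_i)$ is used to ensure $\side(x_2,S')$ is unaffected by the choice at $v_1$), and a fraction $\geq\frac1m$ of the choices at $v_2$ keep $x_2$ out of the open stars of the new vertices near $v_2$. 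This yields a $\frac{1}{2m^2}$ fraction of children satisfying the hypotheses of Lemma \ref{lem-persistenceofseparation}, whence $m^{-i}\cdot\frac{1}{2m^2}\cdot A$, which is where the exponent $i+2$ actually comes from (not from ``deliberate slack''). To repair your write-up you would need to import this one-level-up, independence-based counting; as written, the proposal does not establish the lemma.
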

\begin{proof}
Choose  $v_1\in V_i$ such that $x_1\in \tstar(v_1,X_i)$.

Observe that of the 
children $W$ of $v_1$, a measure at least $\frac{1}{2m}$ lie weakly on each side of $x_1$
and satisfy
$x_2\notin\cup_{v\in W}\Star^o(v,X_i')$,  in view of our assumption
on $x_1$ and $x_2$, i.e.
$$
w_{\child(v_1)}\left( 
\{ W\in \child(v_1)\mid W\preceq x_1,\;
x_2\notin\cup_{v\in W}\Star^o(v,X_i')\}  \right)\geq \frac{1}{2m}\,,
$$
$$
w_{\child(v_1)}\left( 
\{ W\in \child(v_1)\mid W\succeq x_1,\;
x_2\notin\cup_{v\in W}\Star^o(v,X_i')\}  \right)
\geq \frac{1}{2m}\,.
$$  

Suppose $S\in \slice_{i-1}'$ and $v_1\in \pi_{i-1}^{-1}(S)$.
Then each child $S'\in \slice_i'$ of $S$ contains some child of $v_1$,
and $\side(x_2,S')$ is independent of this choice, because $x_2$ lies outside
$\tstar(v_1,X_i)$.   Furthermore, if $x_2\in \Star(v_2,X_i)$ for some $v_2\in (V_i\cap \pi_{i-1}^{-1}(S))
\setminus\{v_1\}$, then $S'$ contains a child of $v_2$, and a fraction at least $\frac1m$
of this set of children $W\in \child(v_2)$
satisfies $x_2\notin \cup_{v\in W}\Star^o(v,X_i')$.
Thus a fraction at least $\frac{1}{2m^2}$ of the children of $S$ satisfy the assumptions
of Lemma \ref{lem-persistenceofseparation}.

Since the set of $S\in \slice_{i-1}'$ with $v_1\in \pi_{i-1}^{-1}(S)$ has $\Si_{i-1}'$-measure
$m^{-i}$ by Lemma \ref{lem-normalized}, by the preceding reasoning, we conclude that 
$d_{\Si_j'}(x_1',x_2')\geq A\,m^{-(i+2)}$.
\end{proof}

\begin{lemma}
\label{lem-dsiedge}

Suppose $i,j\in \Z$, $i\leq j$, $x_1,x_2\in X_j$, $e$ is an edge of $X_i$, and $\pi_i(x_1),\pi_i(x_2)\in e$.
Then 
\begin{equation}
\label{eqn-dsij}
d_{\Si_j'}(x_1,x_2)\leq m^{-i}\,.
\end{equation}
\end{lemma}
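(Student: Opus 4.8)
The plan is to rewrite $d_{\Si_j'}(x_1,x_2)$ as the $\Si_j'$-measure of the slices separating $x_1$ and $x_2$, and to control that measure by projecting those slices down to level $i$, where the images of $x_1,x_2$ are pinned to the single edge $e$. By Definition~\ref{def-dsii'}, $d_{\Si_j'}(x_1,x_2)=\Si_j'(\mathcal E)$, where $\mathcal E\subset\slice_j'$ is the set of $S$ with $d_{S_\le}(x_1,x_2)=1$. Write $q_k=\pi_i^j(x_k)\in e$, let $p_0\prec\cdots\prec p_m$ be the vertices of $V_i'$ on $e$ (consecutive ones at $\phi_i$-distance $m^{-(i+1)}$), and assume $q_1\preceq q_2$, say $q_1$ on the sub-edge $[p_{k_1},p_{k_1+1}]$ and $q_2$ on $[p_{k_2},p_{k_2+1}]$, $0\le k_1\le k_2\le m-1$ (the cases where $q_1$ or $q_2$ is a vertex of $V_i'$, or $k_1=k_2$, only reduce the count below). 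Since each slice in $\supp\Si_j'$ is a descendant of a unique $S^{(i)}\in\slice_i'$, linearity of the diffusion operators gives $\Si_j'=\sum_{S^{(i)}}\Si_i'(S^{(i)})\,\mu_{S^{(i)}}$ with $\mu_{S^{(i)}}$ the probability measure of~(\ref{eqn-descendentmeasure}) on $\desc(S^{(i)},X_j')$; so it suffices to see which $S^{(i)}$ have a descendant in $\mathcal E$ and with what probability.

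If neither $q_1$ nor $q_2$ lies in $\bigcup_{v\in S^{(i)}}\Star^o(v,X_i')$, then Lemma~\ref{lem-projectionhausdorffdistance} and Lemma~\ref{lem-persistenceofsides} give $\side(x_k,S)=\side(q_k,S^{(i)})$ for every descendant $S$, so $S\in\mathcal E$ forces $S^{(i)}$ to separate $q_1,q_2$ in $X_i$. Hence an $S^{(i)}$ with a descendant in $\mathcal E$ must either separate $q_1,q_2$ in $X_i$ — equivalently (as $q_1,q_2\in e$) contain a $p_l$ with $\phi_i(q_1)\le\phi_i(p_l)<\phi_i(q_2)$ — or contain one of the vertices of $e$ adjacent to $q_1$ or $q_2$, namely one of $p_{k_1},p_{k_1+1},p_{k_2},p_{k_2+1}$. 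All such ``responsible'' vertices lie on $e$, hence are pairwise $\preceq$-comparable, so each slice contains at most one of them, one of $p_{k_1},\dots,p_{k_2+1}$. Grouping $\mathcal E$ by the responsible vertex and using $\Si_i'(\{S^{(i)}\ni p_l\})=m^{-(i+1)}$ (Lemma~\ref{lem-normalized}) gives $d_{\Si_j'}(x_1,x_2)=m^{-(i+1)}\sum_{l=k_1}^{k_2+1}\theta_l$ with $\theta_l\in[0,1]$, which naively is at most $(m+1)m^{-(i+1)}$ — just barely too large.

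The sharp constant comes from noticing that the two outer responsible vertices $p_{k_1}$ (below $q_1$) and $p_{k_2+1}$ (above $q_2$) each contribute only $\theta\le\tfrac12$. For $p_{k_1}$: if $S^{(i)}\ni p_{k_1}$ then $p_{k_2},p_{k_2+1}\notin S^{(i)}$, so $q_2$ avoids the open stars of $S^{(i)}$ and $\side(x_2,S)=\succ$ for all descendants; thus $S\in\mathcal E$ forces $x_1\preceq S$, i.e.\ the intersection of $S$ with a fixed complete monotone geodesic through $x_1$ must lie at $\phi_j$-level $\ge\phi_i(q_1)$. But along such a geodesic this intersection point moves, as one passes from level $i$ to level $j$, by a sum of independent increments each \emph{symmetric about $0$} — this is exactly the balance $\tfrac12\cdot\tfrac{m-1}{m}$ between type-(2) and type-(3) children in Definition~\ref{def-vertexchildmeasure}, see~(\ref{eqn-childmeasurebalanced}). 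Since it starts at $p_{k_1}$, strictly below $q_1$, the probability it ends at level $\ge\phi_i(q_1)$ is $\le\tfrac12$; hence $\theta_{k_1}\le\tfrac12$, and symmetrically $\theta_{k_2+1}\le\tfrac12$. With $\theta_l\le1$ for $k_1<l\le k_2$ this yields $\sum_{l=k_1}^{k_2+1}\theta_l\le\tfrac12+(k_2-k_1)+\tfrac12\le m$, so $d_{\Si_j'}(x_1,x_2)\le m^{-i}$. The main obstacle is this last step — making precise that the boundary's displacement along a monotone geodesic is a sum of independent symmetric increments, and carrying out the routine but fiddly case analysis for the responsible vertices (including the degenerate cases); the rest is the localization via Lemmas~\ref{lem-projectionhausdorffdistance}, \ref{lem-persistenceofsides} and the normalization Lemma~\ref{lem-normalized}.
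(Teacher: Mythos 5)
Your strategy is essentially parallel to the paper's in its outer structure (reduce to the level-$i$ slices meeting $e$, bound the $m-1$ interior subdivision vertices trivially via Lemma \ref{lem-normalized}, and show the two outermost vertices together contribute at most $m^{-(i+1)}$), but your mechanism for the last step is genuinely different. The paper's Claims 1 and 2 run an explicit recursion: they track the mass $\al_k(A_k)\ge m^{-(k+1)}$ of slices sitting at the vertex just below $\pi_k(x_1)$ and peel off, at each level, the fraction $\frac{m-1}{2m}\al_{k-1}(A_{k-1})$ that escapes permanently via type-(3) children, telescoping to $\frac12 m^{-(i+1)}+\frac12 m^{-(k_0+1)}$. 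You instead observe that the slice position along a fixed complete monotone geodesic through $x_1$ evolves by independent increments, each symmetric about $0$ because of the $\frac12\cdot\frac{m-1}{m}$ balance in Definition \ref{def-vertexchildmeasure}, and apply a reflection bound. That argument is sound: the points you flag as fiddly do check out (the child slice meets $\ga$ only at the child-point of the unique lifted vertex, since trimmed stars of vertices off $\ga$ are disjoint from $\ga$; and $x_1\preceq S$ can be tested on $\ga$ alone because a slice cannot satisfy $w\prec x_1\prec w'$ for $w,w'\in S$).

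There is, however, a genuine gap in your handling of the degenerate cases, which you wave off as ``only reducing the count.'' If $q_1=\pi_i(x_1)$ is itself a vertex $p_{k_1}\in V_i'$, the set of responsible vertices is still all of $\{p_{k_1},\dots,p_{k_2+1}\}$ (the separating vertices $p_{k_1+1},\dots,p_{k_2}$ are unchanged and $p_{k_1}$ remains responsible since $q_1\in\Star^o(p_{k_1},X_i')$), so the count does \emph{not} drop; meanwhile your bound $\theta_{k_1}\le\frac12$ fails, because the walk starts exactly at $\phi(q_1)$ and the relevant probability is $P(\sum_k\delta_k\ge 0)=\frac12(1+P_0)$ with $P_0=P(\sum_k\delta_k=0)\ge m^{-(j-i)}>0$. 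With $k_2-k_1=m-1$ this yields a total of $(m+\frac12P_0)\,m^{-(i+1)}>m^{-i}$. The repair is to pair the two outer estimates instead of bounding each by $\frac12$: in all cases $\theta_{k_1}\le P(\sum_k\delta_k\ge 0)=\frac12(1+P_0)$, while $\theta_{k_2+1}\le P(\sum_k\delta_k<0)=\frac12(1-P_0)$ because the right-hand walk must end \emph{strictly} below its starting level; since the increment law is position-independent, $P_0$ is the same for both walks and the two contributions sum to exactly $m^{-(i+1)}$. This asymmetry between a weak and a strict inequality is exactly what the paper's Claims 1 and 2 encode with their bounds $\frac12(m^{-(i+1)}+m^{-(j+1)})$ and $\frac12(m^{-(i+1)}-m^{-(j+1)})$, the negative correction in Claim 2 coming from discarding the slices that land exactly at the level of $x_2$.
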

\begin{proof}
Let $v_1,v_2$ be the endpoints of $e$, where $v_1\prec v_2$.   We may assume without loss of 
generality that $\pi_i(x_1)\preceq \pi_i(x_2)$.   

By Definition \ref{def-dsii'}, the distance $d_{\Si_j'}(x_1,x_2)$ is the 
$\Si_j'$-measure of the set 
$$
Y=\{S\in\slice_j'\mid \;x_1\preceq S\prec x_2\quad \mbox{or}\quad
x_2\preceq S \prec x_1\;\}.
$$

If  $S\in \slice_i'$ and $S\cap e=\emptyset$, then $S$ does not weakly separate $\pi_i(x_1),\pi_i(x_2)$, so by 
Lemma \ref{lem-persistenceofsides},  no descendent $S'\in \desc(S,X_j')$
can weakly separate $x_1$ and $x_2$, i.e. $Y\cap \desc(S,X_j')=\emptyset$. 
For $v\in V_i'$ let  
$$
\slice_i'(v)=\{S\in \slice_i'\mid v\in S\}\,
$$
 and let $\Si_i'\on\slice_i'(v)$
be the restriction of $\Si_i'$ to $\slice_i'(v)$.
Thus, using the diffusion operators $K_l$ from (\ref{eqn-diffusion}), the above observation implies that
\begin{align*}
\Si_j'(Y)=&K_{j-1}\circ\ldots\circ K_i(\Si_i')(Y)\\
=&\sum_{v\in e\cap V_i'}K_{j-1}\circ\ldots\circ K_i(\Si_i'\on\slice_i'(v))(Y)
\end{align*}
\begin{equation}
\label{eqn-breakdown}
\quad \quad\leq (m-1)m^{-(i+1)}+\sum_{v\in \{v_1,v_2\} }K_{j-1}\circ\ldots\circ K_i(\Si_i'\on\slice_i'(v))(Y)\,,
\end{equation}
because the mass of $\Si_i'\on \slice_i'(v)$ is $m^{-(i+1)}$ by 
Lemma \ref{lem-normalized}.    The remainder of the proof
is devoting to showing that the total contribution from the last two terms in 
(\ref{eqn-breakdown}) is at most $m^{-(i+1)}$.

Suppose $\pi_i(x_1)\notin\Star^o(v_1,X_i')$.
If  $S\in\slice_i'(v_1)$, then no descendent $S'\in\desc(S,X_j')$
can weakly separate $x_1,x_2$ by Lemma \ref{lem-persistenceofsides}.
Therefore
$$
K_{j-1}\circ\ldots\circ K_i(\Si_i'\on\slice_i'(v_1))(Y)=0\,,
$$
so we are done in this case.  
Hence we may
assume that $\pi_i(x_1)\in\Star^o(v_1,X_i')$, and by similar reasoning, that 
$\pi_i(x_2)\in\Star^o(v_2,X_i')$.   This implies by Lemma \ref{lem-persistenceofsides}
that if $v_1\in S\in \slice_i'$ and $S'\in \desc(S,X_j')$ then $S'\prec x_2$; similarly, 
if $v_2\in S\in \slice_i'$ and $S'\in \desc(S,X_j')$, then $x_1\prec S'$.

By (\ref{eqn-breakdown}), the lemma now reduces to   the following two claims:

{\em Claim 1.}  $K_{j-1}\circ\ldots\circ K_i(\Si_i'\on\slice_i'(v_1))(Y)\leq \frac12(m^{-(i+1)}+m^{-(j+1)})$.

{\em Claim 2.}  $K_{j-1}\circ\ldots\circ K_i(\Si_i'\on\slice_i'(v_2))(Y)\leq \frac12(m^{-(i+1)}-m^{-(j+1)})$.

\noindent
{\em Proof of Claim 1.}
For each $i\leq k\leq j$, let $w_k'$ be the unique vertex in $V_k'$ such that 
$\pi_k(x_1)\in \Star^o(w_k',X_k')$ and $w_k'\preceq \pi_k(x_1)$.  Likewise,
let $w_k$ be the unique vertex in $V_k$ such that 
$\pi_k(x_1)\in \Star^o(w_k,X_k)$ and $w_k\preceq \pi_k(x_1)$.   Thus $w_k'\in \Star^o(w_k,X_k)$.  Let 
$k_0$ be the maximum of the integers $k\in[i,j]$ such that $\pi_i(w_{\bar k}')=v_1$ for all
$i\leq \bar k\leq k$.  It follows that $w_k'=w_k$ for all $i\leq k\leq k_0$.

For $i\leq k\leq j$, let $A_k$ be the collection of slices $S\in \slice_k'$
which contain $w_k$, and let $B_k$ be the collection
of slices $S\in \slice_k'$ which contain a child of $w_k$ of  type (3).   We now define a sequence
of measures $\{\al_k\}_{i\leq k\leq k_0}$ inductively as follows.  Let $\al_i$ be the restriction of 
$\Si_i'$ to $\{S\in \slice_i'\mid v_1\in S\}$.   For $k<k_0$, we define $\al_{k+1}$ to the 
restriction of $K_k\al_k$ to $\slice_{k+1}'\setminus B_{k+1}$, where $K_k$ is the diffusion 
operator (\ref{eqn-diffusion}).    Since by Lemma \ref{lem-persistenceofsides} 
the set of descendents $\desc(B_k,X_j')$ is disjoint from $Y$,
it follows that 
\begin{equation}
\label{eqn-sameassij'}
\quad \quad K_{j-1}\circ\ldots\circ K_i(\Si_i'\on\slice_i'(v_1))\on Y
=(K_{j-1}\circ\ldots K_k\al_k)\on Y
\end{equation}
for all $k\leq k_0$.

Note that by the definition of the diffusion operator $K_{k-1}$ we have
\begin{align}
\label{eqn-recursionestimates1}
\al_k(A_k)\geq& \frac1m\cdot\al_{k-1}(A_{k-1})\\
\label{eqn-recursionestimates2}(K_{k-1}\al_{k-1})(B_k)\geq& \frac{(m-1)}{2m}\cdot\al_{k-1}(A_{k-1})
\end{align}
for all $i<k\leq k_0$.
This yields $\al_k(A_k)\geq m^{-(k+1)}$ for all $i\leq k\leq k_0$. 
Hence for all $i<k\leq k_0$ we get
\begin{align*}
\al_k(\slice_k')=&\, \al_{k-1}(\slice_{k-1}')-(K_{k-1}\al_{k-1})(B_k)\\
\leq& \,\al_{k-1}(\slice_{k-1}')-\frac{(m-1)}{2m}\cdot m^{-k}
\end{align*}
and so
\begin{align*}
\al_{k_0}(\slice_{k_0}')\leq& m^{-(i+1)}-\frac{(m-1)}{2m}\cdot(m^{-(i+1)}+\ldots +m^{-k_0})\\
=&\frac12 m^{-(i+1)}+\frac12 m^{-(k_0+1)}\,.
\end{align*}
This gives Claim 1 when $k_0=j$, by  (\ref{eqn-sameassij'}).

We now assume that
$k_0<j$.    Then $w_{k_0+1}\prec w_{k_0+1}'$.   By Lemma \ref{lem-persistenceofsides}, every 
descendent $S'\in\slice_j'$ of a slice $S\in A_{k_0+1}\cup B_{k_0+1}$ will satisfy $S'\prec x_1$, 
so $S'\notin Y$.  Therefore if we define $\al_{k_0+1}$ to be the restriction of 
$K_{k_0}\al_{k_0}$ to $\slice_{k_0+1}'\setminus (A_{k_0+1}\cup B_{k_0+1})$, then 
$$
K_{j-1}\circ\ldots\circ K_i(\Si_i'\on\slice_i'(v_1))(Y)
=K_{j-1}\circ\ldots\circ K_{k_0+1}(\al_{k_0+1})(Y)\,.
$$
Also,
as in (\ref{eqn-recursionestimates1})--(\ref{eqn-recursionestimates2}), we get 
\begin{align*}
K_{k_0}\al_{k_0}(A_{k_0+1}\sqcup B_{k_0+1})\geq& \left(\frac1m +\frac{(m-1)}{2m}\right)\al_{k_0}(A_{k_0})\\
\geq& \left(\frac1m +\frac{(m-1)}{2m}\right)m^{-(k_0+1)}\,.
\end{align*}
Therefore
\begin{align*}
K_{j-1}\circ\ldots\circ K_{k_0+1}(\al_{k_0+1})(Y)\leq&\;
\al_{k_0+1}(\slice_{k_0+1}')\\
= &\;\al_{k_0}(\slice_{k_0}')
-K_{k_0}\al_{k_0}(A_{k_0+1}\sqcup B_{k_0+1})\\
\leq &\;\frac12 m^{-(i+1)}+\frac12 m^{-(k_0+1)}
-\left(\frac1m +\frac{(m-1)}{2m}\right)m^{-(k_0+1)}\\
\leq &\;\frac12 m^{-(i+1)}-\frac12 m^{-(k_0+1)}
\end{align*}
so Claim 1 holds.

\bigskip
\noindent
{\em Proof of Claim 2.}  The proof is similar to that of Claim 1, except that one replaces $v_1$
with $v_2$, and reverses the orderings.  However, in the case when 
$k_0=j$, one simply notes that any slice $S'\in A_{k_0}=A_j$ satisfies
$x_1\preceq S'$, $x_2\preceq S'$, so $S'\notin Y$.   Therefore we may remove the measure
contributed by $A_{k_0}$ from our estimate, making it smaller by 
$m^{-(j+1)}$.

\end{proof}

\begin{corollary}
\label{cor-dsistar}
Suppose $i,j\in \Z$, $i\leq j$, $x_1,x_2\in X_j$, $v\in V_i$, and
$\{x_1,x_2\}\subset (\pi_i^j)^{-1}(\Star(v,X_i))$.
Then 
\begin{equation}
\label{eqn-dsij'}
d_{\Si_j'}(x_1,x_2)\leq 2m^{-i}\,.
\end{equation}
\end{corollary}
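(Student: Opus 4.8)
The plan is to derive this from Lemma~\ref{lem-dsiedge} together with the triangle inequality for the cut pseudo-metric $d_{\Si_j'}$. First I would unwind the hypothesis: since $\Star(v,X_i)$ is by definition the union of the (finitely many, by Assumption~\ref{ass-tempfinite}) edges of $X_i$ with $v$ as an endpoint, there are edges $e_1,e_2$ of $X_i$ with $v\in e_1\cap e_2$, $\pi_i^j(x_1)\in e_1$, and $\pi_i^j(x_2)\in e_2$ (if $\pi_i^j(x_k)=v$ one may take $e_k$ to be either edge at $v$). If it happens that $e_1=e_2$, then $\pi_i^j(x_1)$ and $\pi_i^j(x_2)$ lie on a single edge of $X_i$, and Lemma~\ref{lem-dsiedge} already gives the stronger bound $d_{\Si_j'}(x_1,x_2)\le m^{-i}\le 2m^{-i}$; so I may assume $e_1\ne e_2$, in which case $e_1\cap e_2=\{v\}$.

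Next, granting for the moment that there is a point $\hat v\in X_j$ with $\pi_i^j(\hat v)=v$, the proof finishes quickly: $\pi_i^j(x_1)$ and $\pi_i^j(\hat v)=v$ both lie on $e_1$, so Lemma~\ref{lem-dsiedge} gives $d_{\Si_j'}(x_1,\hat v)\le m^{-i}$, and likewise $\pi_i^j(x_2)$ and $\pi_i^j(\hat v)=v$ both lie on $e_2$, so $d_{\Si_j'}(\hat v,x_2)\le m^{-i}$. Adding these via the triangle inequality yields $d_{\Si_j'}(x_1,x_2)\le 2m^{-i}$, which is the asserted estimate.

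The main obstacle will be the one genuinely nontrivial point: verifying that $v$ lies in the image of $\pi_i^j$, since the projection maps need not be open or surjective for a general admissible system. Here I expect to use condition~(3) of Assumption~\ref{ass-tempfinite} (``$X_j$ is a union of complete monotone geodesics''). The argument I would give: $x_1$ lies on some edge $\hat e$ of $X_j$, which $\pi_i^j$ carries by a direction-preserving isomorphism onto one of the $m^{j-i}$ sub-edges of $e_1$ in $X_i^{(j-i)}$; the vertex $v$ is one of the two endpoints of $e_1$, so either $\pi_i^j(x_1)\preceq v$ or $v\preceq\pi_i^j(x_1)$. In the first case, Assumption~\ref{ass-tempfinite}(3) applied to $X_j$ provides immediate successors, so $\hat e$ extends to a directed edge-path in $X_j$; since $\pi_i^j$ is direction preserving and an isomorphism on each edge, this path projects to a directed edge-path in $X_i^{(j-i)}$ running along $e_1$ toward $v$, and after at most $m^{j-i}$ edges it passes through a point $\hat v$ with $\pi_i^j(\hat v)=v$. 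In the second case one extends downward using immediate predecessors instead. Once $\hat v$ is produced, the remaining two invocations of Lemma~\ref{lem-dsiedge} and the triangle inequality are routine.
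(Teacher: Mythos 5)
Your reduction to Lemma \ref{lem-dsiedge} via the triangle inequality is exactly the paper's main step: once a point $\hat v\in X_j$ with $\pi_i^j(\hat v)=v$ is available, the two applications of Lemma \ref{lem-dsiedge} along $e_1$ and $e_2$ give the bound, and you correctly isolate the existence of $\hat v$ as the only nontrivial issue. The gap is in your argument for that existence. Your directed-path argument works when $\pi_i^j(x_1)$ lies in the interior of an edge $e_1\ni v$ (or equals $v$): a vertex of $X_i^{(j-i)}$ interior to $e_1$ has exactly one forward and one backward neighbor, both inside $e_1$, so the projected directed path is forced to run along $e_1$ until it reaches both endpoints. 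But the hypothesis $\pi_i^j(x_1)\in\Star(v,X_i)$ also allows $\pi_i^j(x_1)$ to be the \emph{other endpoint} $u\in V_i$ of $e_1=\ol{uv}$. In that case $x_1$ is a vertex of $X_j$, its incident edges project to edges of $X_i^{(j-i)}$ incident to $u$, and none of these need be sub-edges of $e_1$: at the vertex $u$ the projected directed path may leave along a different edge of $X_i$ and never reach $v$. Indeed $v$ need not lie in the image of $\pi_i^j$ at all in this situation (the projections $\pi_i^j$ are not assumed surjective or open), and if moreover $\pi_i^j(x_2)$ is likewise a vertex $u'\neq u$ adjacent to $v$, your proof produces no intermediate point and stalls.

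The paper closes exactly this residual case by a different device: it enlarges $\{X_k\}$ to an admissible system $\{Y_k\}$ satisfying Assumption \ref{ass-tempfinite} by adjoining, for $i<k\le j$, a disjoint copy of $\R$ that projects onto a monotone geodesic through $v$, observes that the cut metric $d_{\Si_j'}$ is unchanged on pairs of points of $X_j\subset Y_j$, and then runs your two-edge argument in $\{Y_k\}$, where $v$ manifestly has a preimage. To complete your proof you would need either to add such a reduction, or to give a separate argument for the case in which both $\pi_i^j(x_1)$ and $\pi_i^j(x_2)$ are vertices of $X_i$ adjacent to $v$.
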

\begin{proof}
First suppose there is an $x\in X_j$ such that $\pi_i^j(x)=v$.   Then $\{\pi_i^j(x),\pi_i^j(x_1)\}$
lies in an edge of $X_i$, so by Lemma \ref{lem-dsiedge} we have
$d_{\Si_j'}(x,x_1)\leq m^{-i}$, and similarly $d_{\Si_j'}(x,x_2)\leq m^{-i}$.
Therefore (\ref{eqn-dsij'}) holds.

In general, construct a new admissible inverse system $\{Y_k\}$ satisfying Assumption
\ref{ass-tempfinite} by letting $Y_k$ be the disjoint union of $X_k$ with a copy of $\R$
when $i<k\leq j$, and $Y_k=X_k$ otherwise.  Then extend the projection map
$\pi_i:X_{i+1}\ra X_i$ to $\pi_i:Y_{i+1}\ra Y_i=X_i$ by mapping 
$Y_{i+1}\setminus X_{i+1}\simeq \R$ to a monotone geodesic containing $v$.   Then for 
$i<k<j$ extend 
$\pi_j:X_{j+1}\ra X_j$ to $\pi_j:Y_{j+1}\ra Y_j$ by mapping $Y_{j+1}\setminus X_{j+1}\simeq \R$
isomorphically to $Y_j\setminus X_j\simeq\R$.   Then there is a system of measures
$\{\Si_{k,Y}'\}$ for the inverse system $\{Y_k\}$, and it follows that the associated 
cut metric $d_{\Si_j'}^Y$ is the same for pairs  $x_1,x_2\subset X_j\subset Y_j$.  
Since $v$ belongs
to the image of $\pi_i^j:Y_j\ra Y_i$, we have $
d_{\Si_j'}^X(x_1,x_2)=d_{\Si_j'}^Y(x_1,x_2)\leq 2m^{-i}$.
\end{proof}

\section{Proof of Theorem \ref{thm-bilipschitzembedding} under 
Assumption \ref{ass-tempfinite}}
\label{sec-completion_assumption}

We will define a sequence $\{\rho_i:X_\infty\times X_\infty\ra [0,\infty)\}$ of pseudo-distances
on $X_\infty$, such that $\rho_i$ is induced by a map $X_\infty\ra L_1$,
and $\rho_i$ converges uniformly to some $\rho_\infty:X_\infty\times X_\infty\ra [0,\infty)$.  
By a standard argument, this yields an isometric embedding $(X_\infty,\rho_\infty)\ra L_1$.
(The theory of ultralimits \cite{heinmank,benlin} implies the metric space $(X_\infty,\rho_\infty)$
isometrically embeds in an ultralimit $V$ of $L_1$ spaces;   by Kakutani's theorem
\cite{kakutani} the space $V$ is isometric to  an $L_1$ space, and so $\rho_\infty$ isometrically
embeds in $L_1$.)
To complete the proof,
 it suffices to verify that $\rho_\infty$ has the properties asserted
by the theorem.  (Alternately, one may construct a cut measure $\Si_\infty$ on $X_\infty$ as weak
limit, and use the corresponding cut metric to provide the embedding to $L_1$.)

Let $\rho_i=(\pi_i^\infty)^*d_{\Si_i'}$ be the pullback of $d_{\Si_i'}$ to $X_\infty$. 
By Lemma \ref{lem-distancechange} we have $|\rho_{i+1}-\rho_i|\leq 4\,m^{-(i+1)}$, so
the sequence $\{\rho_i\}$ converges uniformly to a pseudo-distance 
$\rho_\infty:X_\infty\times X_\infty\ra[0,\infty)$.   

\begin{lemma}

\mbox{}
\begin{enumerate}
\setlength{\itemsep}{.5ex}
\item $\rho_\infty(x,x')\leq \bar d_\infty(x,x')$.
\item $
\rho_\infty(x,x')\geq \frac{A}{2m^3}\cdot\bar d_\infty(x,x')\,.
$
\item $\rho_\infty(x,x')=\bar d_\infty(x,x')$ if $x,x'$ lie on a monotone geodesic.
\end{enumerate}
\end{lemma}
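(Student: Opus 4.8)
The three statements concern the pseudo-metrics $\rho_i=(\pi_i^\infty)^*d_{\Si_i'}$ and their uniform limit $\rho_\infty$. Parts (1) and (3) I would handle first, as they are upper bounds on $\rho_\infty$ and follow fairly directly from estimates already in hand; part (2) is the lower bound and will require combining the ``persistence of separation'' machinery with the scale-vs-distance dictionary from Section \ref{sec-notation_preliminaries}.

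\textbf{Step 1: the upper bound (1).} I would show that each $\rho_i$ already satisfies $\rho_i(x,x')\le \bar d_\infty(x,x')+O(m^{-i})$, and then let $i\to\infty$. Concretely, by Lemma \ref{lem-altdefdbarinfty} it suffices, by the triangle inequality, to bound $\rho_i(x,x')$ when $\{\pi_k^\infty(x),\pi_k^\infty(x')\}\subset\Star(v,X_k)$ for a single vertex $v\in V_k$ and some $k$. For $k\le i$ this is exactly Corollary \ref{cor-dsistar} (with $j=i$), giving $d_{\Si_i'}(\pi_i(x),\pi_i(x'))\le 2m^{-k}$; summing over the chain realizing $\bar d_\infty(x,x')$ up to an $\eps$ gives $\rho_i(x,x')\le \bar d_\infty(x,x')$. (For the finitely many chain-links with $k>i$ one absorbs the error into the uniform-convergence estimate $|\rho_{i+1}-\rho_i|\le 4m^{-(i+1)}$ of Lemma \ref{lem-distancechange}.) Passing to the limit yields $\rho_\infty\le\bar d_\infty$.

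\textbf{Step 2: equality on monotone geodesics (3).} If $x,x'$ lie on a monotone geodesic $\ga\subset X_\infty$, then every slice in the support of $\Si_i'$ is monotone, hence meets $\ga$ in at most one point, so the ``$d_{S_\le}$'' terms simply count the $\Si_i'$-measure of slices separating $\pi_i(x)$ from $\pi_i(x')$ along $\ga$. I would show this measure equals exactly $d(\phi(x),\phi(x'))$ up to $O(m^{-i})$: the normalization Lemma \ref{lem-normalized} says the slices through a vertex carry mass $m^{-(i+1)}$, and along a monotone geodesic the number of vertices of $X_i'$ strictly between $\pi_i(x)$ and $\pi_i(x')$ is $m^{i+1}\,d(\phi(x),\phi(x'))+O(1)$ since $\phi_i$ is a linear isometry on each edge. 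Therefore $\rho_i(x,x')\to d(\phi(x),\phi(x'))=\bar d_\infty(x,x')$, the last equality because a monotone geodesic maps isometrically to $\R$ under $\phi$ (Definition \ref{def-monotone_geodesic} and the lemma preceding it). Combined with (1), this gives (3).

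\textbf{Step 3: the lower bound (2) — the main obstacle.} This is the crux. Given distinct $x,x'\in X_\infty$, let $j$ be as in Lemma \ref{lem-trimmedstarscale}, so $\bar d_\infty(x,x')\le 2m^{-(j-1)}$ and $\{\pi_{j}(x),\pi_{j}(x')\}$ is \emph{not} contained in any trimmed star $\tstar(v,X_{j})$. Now apply Lemma \ref{lem-notinsametrimmedstar} with $i=j$: for every $\ell>j$ the projections $x_1',x_2'\in X_\ell$ satisfy $d_{\Si_\ell'}(x_1',x_2')\ge A\,m^{-(j+2)}$. Taking $\ell\to\infty$, and using that $\rho_\ell=(\pi_\ell^\infty)^*d_{\Si_\ell'}\to\rho_\infty$ uniformly, we get $\rho_\infty(x,x')\ge A\,m^{-(j+2)}=\tfrac{A}{m^{3}}\,m^{-(j-1)}\ge\tfrac{A}{2m^{3}}\,\bar d_\infty(x,x')$, where the last step uses $\bar d_\infty(x,x')\le 2m^{-(j-1)}$. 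The delicate point to verify carefully is that Lemma \ref{lem-notinsametrimmedstar} as stated gives the bound for every finite $\ell$ with a constant \emph{independent of $\ell$}, which it does; one must also check that the definition $\rho_\infty=\lim\rho_i$ is compatible with evaluating at $\pi_\ell^\infty(x)$ for the lifts $x_1',x_2'$, i.e. that $\rho_\ell(x,x')=d_{\Si_\ell'}(\pi_\ell(x),\pi_\ell(x'))$ by definition — which is immediate — so that no additional argument is needed beyond quoting Lemma \ref{lem-notinsametrimmedstar} and passing to the limit.
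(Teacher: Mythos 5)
Your proposal is correct and follows essentially the same route as the paper: part (1) rests on Corollary \ref{cor-dsistar} together with the characterization of $\bar d_\infty$ (the paper invokes the supremality in Definition \ref{def-dbarinfty} directly, which is marginally cleaner than chaining via Lemma \ref{lem-altdefdbarinfty}, but these are equivalent); part (2) is exactly the paper's combination of Lemma \ref{lem-trimmedstarscale} with Lemma \ref{lem-notinsametrimmedstar}, with the same arithmetic; and part (3) is the same vertex-counting argument via Lemma \ref{lem-normalized}.
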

\begin{proof}
(1).  Suppose $x,x'\in X_\infty$, and for some $i\in \Z$ the projections 
$\{\pi_i(x),\pi_i(x')\}$ are contained in $ \Star(v,X_i)$.
By Corollary \ref{cor-dsistar} we have
$\rho_\infty(x,x')\leq 2m^{-i}$.   Now (1) follows from the definition
of $\bar d_\infty$.

(2).  Suppose
$x\neq x'$, and let $j\in \Z$ be the minimum of the indices $k\in \Z$ such that
$\pi_k(x),\pi_k(x')$ are not contained in the trimmed star of any vertex $v\in X_k$.
Then $\bar d_\infty(x,x')\leq 2m^{-(j-1)}$ by Lemma \ref{lem-trimmedstarscale}, while
$\rho_\infty(x,x')\geq Am^{-(j+2)}$ by Lemma \ref{lem-notinsametrimmedstar}.   Thus
$$
\rho_\infty(x,x')\geq \frac{A}{2m^3}\cdot\bar d_\infty(x,x')\,.
$$

(3).  If $x,x'\in X_\infty$ lie on a monotone geodesic $\ga$, then 
$\ga$ will  project homeomorphically under $\pi_i$ to a monotone geodesic
$\pi_i(\ga)$, which contains at least $\bar d_\infty(x_1,x_2)m^{(i+1)}-2$ vertices of $V_i'$.
By Lemma \ref{lem-normalized} we have $\rho_i(x_1,x_2)\geq \bar d_\infty(x_1,x_2)-2m^{-(i+1)}$.
Since $i$ was arbitrary we get $\rho_\infty(x,x')=\bar d_\infty(x,x')$.
\end{proof}

This completes the proof 
of Theorem \ref{thm-bilipschitzembedding} under 
Assumption \ref{ass-tempfinite}.

\section{The proof of Theorem \ref{thm-bilipschitzembedding}, general case}
\label{sec-generalcase}

We recall the three conditions from Assumption \ref{ass-tempfinite}:
\begin{enumerate}
\setlength{\itemsep}{.5ex}
\item $\pi_i$ is finite-to-one for all $i\in\Z$.
\item $X_i\simeq\R$ and $\pi_i:X_{i+1}\ra X_i'$ is an isomorphism for all $i\leq 0$.
\item For every $i\in\Z$,  every vertex $v\in V_i$ has neighbors $v_\pm\in V_i$
with $v_-\prec v\prec v_+$.   Equivalently, $X_i$ is a union of  monotone
geodesics (see Definition \ref{def-monotone_geodesic}). 
\end{enumerate}

In this section these three conditions will be removed in turn.

\subsection{Removing the finiteness assumption}
\label{subsec-(2)and(3)}
We now assume that $\{X_i\}$ is an admissible inverse system satisfying  conditions (2) and (3) of
Assumption \ref{ass-tempfinite}, but not necessarily the finiteness condition (1).

To prove Theorem \ref{thm-bilipschitzembedding} without the finiteness assumption,
 we observe that the construction 
of the distance function
$\rho_\infty:X_\infty\times X_\infty\ra \R$ can be reduced to the case already  treated, in the sense that
for any two points $x_1,x_2\in X_\infty$, we can apply the construction of the cut metrics to 
finite valence subsystems, and the resulting distance $\rho_\infty(x_1,x_2)$ is independent of the
choice of subsystem.  The proof is then completed by invoking the main result of 
\cite{krivineetal}.   We now give the
details.

\begin{definition}
A {\bf finite subsystem} of the inverse system $\{X_j\}$ is a  collection of subcomplexes
$\{Y_j\subset X_j\}_{j=-\infty}^i$, for some $i\geq 0$, such that $\pi_j(Y_{j+1})\subset Y_j$
for all $j<i$, and $\{Y_j\}_{j\leq i}$ satisfies Assumption \ref{ass-tempfinite} for indices $\leq i$.
In other words:
\begin{enumerate}
\setlength{\itemsep}{.5ex}
\item $\pi_j$ is finite-to-one for all $j\leq i$.
\item $Y_j\simeq\R$ and $\pi_j:Y_{j+1}\ra Y_j'$ is an isomorphism for all $j\leq 0$.
\item $Y_j$  is a union of (complete) monotone
geodesics for all $j\leq i$.
\end{enumerate}
\end{definition}

Suppose $i\geq 0$ and $V$ is a finite subset of $V_i'\subset X_i$.
Then  there exists a finite subsystem 
$\{Y_j\}_{j=-\infty}^i$ such that
$Y_i$ contains
$V$.   One  may be obtain such a system by letting
$Y_i$ be  a finite union of monotone geodesics in $X_i$ which contains $V$, and 
taking $Y_j=\pi_j^i(Y_i)$ for 
$j<i$.
The inductive construction of the slice measures  in the finite valence case
may be applied to the finite subsystem $\{Y_j\}$, to obtain a sequence of slice measures 
which we denote
by $\Si_{j,Y}'$, to emphasize the potential dependence on $Y$.

\begin{lemma}
\label{lem-sii'independent}
Suppose $i\geq 0$, $V\subset V_i'$ is a finite subset, and let
$\{Y_j\}_{j\leq i_Y}$ and $\{Z_j\}_{j\leq i_Z}$ be  finite subsystems of $\{X_j\}$,
where $i\leq \min(i_Y,i_Z)$ and $V\subset Y_i\cap Z_i$.
If $\Si_{i,Y}'$, $\Si_{i,Z}'$ denote the respective slice measures, then 
$$
\Si_{i,Y}'(\{S\in \slice_{i,Y}'\mid S\supset V\})=\Si_{i,Z}'(\{S\in \slice_{i,Z}'\mid S\supset V\})\,,
$$
 i.e. 
the slice measure does not depend on the choice of subsystem
containing $V$.
\end{lemma}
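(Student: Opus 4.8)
\emph{Plan.} The quantity to be compared sees only which slices contain the fixed finite set $V$, and the cleanest way to establish its independence of the ambient subsystem is to show that the whole family of slice measures of a larger finite subsystem pushes forward, under an obvious restriction map, to the family of a smaller one.

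\emph{Reduction to a nested pair.} Given finite subsystems $\{Y_j\}$ and $\{Z_j\}$ with $V\subset Y_i\cap Z_i$, I would set $W_j:=Y_j\cup Z_j$ for $0<j\le i$ and $W_j:=X_j$ for $j\le 0$. A finite union of finite-valence subcomplexes of $X_j$, each a union of complete monotone geodesics, is again such a subcomplex, and the maps $\pi_j$ restrict correctly; so $\{W_j\}_{j\le i}$ is a finite subsystem with $Y_j\subset W_j$ and $Z_j\subset W_j$ for all $j\le i$, and $V\subset W_i$. It thus suffices to compare $\Si_{i,A}'$ with $\Si_{i,B}'$ when $\{A_j\}\subset\{B_j\}$ is a nested pair of finite subsystems, meaning $A_j\subset B_j$ as subcomplexes of $X_j$ for all $j\le i$.

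\emph{The restriction map and the inductive identity.} For $k\le i$ let $r_k\colon\slice_{k,B}'\to 2^{V(A_k')}$ be given by $r_k(S)=S\cap V(A_k')$. Every complete monotone geodesic of $A_k$ is also one of $B_k$, and the unique vertex in which a $B$-slice meets such a geodesic lies on it, hence in $V(A_k')$; therefore $r_k$ in fact lands in $\slice_{k,A}'$, and since $V\subset V(A_i')$ we have $\{S\in\slice_{i,B}'\mid S\supset V\}=r_i^{-1}\{T\in\slice_{i,A}'\mid T\supset V\}$. So the lemma follows once we prove $(r_k)_*\Si_{k,B}'=\Si_{k,A}'$ for all $k\le i$. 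This I would establish by induction on $k$: it is trivial for $k\le 0$, where $r_k=\id$ and both measures are the standard one on $V_k'$; and for the inductive step, since $\Si_{k+1,\bullet}'=K_{k,\bullet}(\Si_{k,\bullet}')$, it is enough to check that for each $S\in\slice_{k,B}'$ the kernel $K_{S,B}$ pushes forward under $r_{k+1}$ to $K_{r_k(S),A}$.

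\emph{The main point.} This last assertion rests on three local facts: (i) $\pi_{k,B}^{-1}(S)\cap V(A_{k+1})=\pi_{k,A}^{-1}(r_k(S))$, immediate since $\{A_j\}$ is a subsystem; (ii) if $v\in\pi_{k,B}^{-1}(S)$ is not a vertex of $A_{k+1}$ then $\tstar(v,B_{k+1})\cap V(A_{k+1}')=\emptyset$, because an edge of $B_{k+1}$ at $v$ lies in the subcomplex $A_{k+1}$ only if $v$ does, so such $v$ contribute nothing to the $A$-part; and (iii) for $v\in V(A_{k+1})$ the pushforward of $w_{\child(v),B}$ under $S'\mapsto S'\cap V(A_{k+1}')$ equals $w_{\child(v),A}$. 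Fact (iii) is the crux, and it is precisely the ``balanced'' phenomenon behind (\ref{eqn-childmeasurebalanced}): in Definition \ref{def-vertexchildmeasure} the type-(1) mass $\tfrac1m$ on $\{v\}$ survives restriction unchanged, while the type-(2) mass $\tfrac12\tfrac{m-1}{m}$ is a \emph{product} of uniform measures over the interior vertices of the up-edges of $v$, so deleting the coordinates belonging to up-edges not in $A_{k+1}$ leaves the product of uniform measures over the up-edges of $v$ that do lie in $A_{k+1}$ (there is at least one, by the monotone-geodesic condition for $A$), with the same total mass; the type-(3) mass is treated identically. Granting (i)--(iii) together with the independence built into the union map, $r_{k+1}$ carries the $B$-diffusion step to the $A$-diffusion step, which completes the induction. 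One must also note at each stage that $r_k(S)$ is a genuine $A$-slice, not merely a partial one; this follows from the geodesic observation made above combined with the already-established fact that children of slices are slices. I expect the main obstacle to be the bookkeeping in (iii): matching the measures, and not just their supports, across two different subsystems.
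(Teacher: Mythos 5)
Your proposal is correct, but it is organized rather differently from the paper's argument. The paper proves the equality directly by induction on $i$: it expands $\Si_{i,Y}'(\{S'\supset V\})$ via the diffusion formula (\ref{eqn-diffusion}), observes that the nonzero terms come from parent slices containing exactly one of finitely many sets $\bar V_1,\dots,\bar V_k\subset V_{i-1}'$ determined by $V$ and the edges of $X_i$ alone (hence the same for $Y$ and $Z$), notes that the kernel contribution $K_S(\{S'\supset V\})$ depends only on which $\bar V_l$ the parent contains, and then invokes the inductive hypothesis applied to each $\bar V_l$. You instead reduce to a nested pair via the union $W_j=Y_j\cup Z_j$ --- a device the paper does use, but only in the proof of Lemma \ref{lem-cutmetricindependentofy} --- and prove the strictly stronger statement that the restriction map $S\mapsto S\cap V(A_k')$ pushes the entire slice measure of the larger subsystem forward onto that of the smaller one, by checking that restriction intertwines the diffusion kernels level by level. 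Both arguments ultimately rest on the same local computation, namely the balanced structure of $w_{\child(v)}$ in Definition \ref{def-vertexchildmeasure}: the type-(2) and type-(3) masses are products of uniform measures over the up- and down-edges at $v$, so deleting the coordinates of edges absent from the smaller subsystem marginalizes the measure correctly (your fact (iii)), with nonemptiness of the remaining factor guaranteed by the monotone-geodesic condition. Your route buys a cleaner consistency statement (full compatibility of the measures under restriction, from which this lemma and Lemma \ref{lem-cutmetricindependentofy} both follow at once) at the cost of the extra bookkeeping of verifying that $r_k$ carries slices to slices and commutes with the union map; the paper's route is shorter because it only ever compares the two evaluations at the fixed set $V$.
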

\begin{proof}
If $i\leq 0$ then $Y_i=Z_i$ and $\Si_{i,Y}'=\Si_{i,Z}'$ by construction.  So assume that $i>0$, and that
the lemma holds for all finite subsets of $V_{\bar i}'$ for all $\bar i<i$.

Suppose $S'\in \slice_{i,Y}'$ is a child of $S\in \slice_{i-1,Y}'$, and $V\subset S'$.   Then for every 
$v\in V$, either $v\in V_i$, in which case $\pi_{i-1}(v)\in S$, or $v\in V_i'\setminus V_i$, 
in which case $v$ is an interior point of some edge $\ol{u_1u_2}$ of $Y_i\subset X_i$, and $S$ contains precisely
one of the points $\pi_{i-1}(u_1)$, $\pi_{i-1}(u_2)$.   By the definition   of $\Si_{i,Y}'$ given 
by  (\ref{eqn-diffusion}):
\begin{align}
\label{eqn-slicecontainingv}
\Si_{i,Y}'(\{S'&\in \slice_{i,Y}'\mid  S'\supset V\})\nonumber\\
=&\sum_{S\in\slice_{i-1,Y}'}\,K_{S}(\{S'\in \slice_{i,Y}'\mid S'\supset V\})\,\Si_{i-1,Y}'(S)
\end{align}
By the above observation, the nonzero terms in  the sum come from  the slices $S\in \slice_{i-1,Y}'$
which contain  precisely one of a finite collection of finite subsets $\bar V_1,\ldots, \bar V_k\subset V_{i-1}'$.
If $S\in \slice_{i-1,Y}'$ contains $\bar V_l$, 
then from  the definition of $K_S$, the quantity $K_{S}(\{S'\in \slice_i'\mid S'\supset V\})$ depends only
on $V_l$.   Therefore by the induction assumption, it follows that the nonzero terms in (\ref{eqn-slicecontainingv})
will be the same as the corresponding terms in the sum defining $\Si_{i,Z}(\{S'\in \slice_{i,Y}'\mid S'\supset V\})$. 

\end{proof}

\begin{lemma}
\label{lem-cutmetricindependentofy}
If $\{Y_j\}_{j\leq i}$ is a finite subsystem such that $Y_i$ contains
$\{x_1,x_2\}\subset X_i$, then the cut metric $d_{\Si_{i,Y}'}(x_1,x_2)$ does not depend on the choice
of $\{Y_j\}_{j\leq i}$.
\end{lemma}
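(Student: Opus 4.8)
\emph{Proof proposal.} The plan is to exhibit a formula for $d_{\Si_{i,Y}'}(x_1,x_2)$ whose ingredients — an index set of finite subsets of $V_i'$, together with coefficients and with the measures $\Si_{i,Y}'(\{S\mid V\subset S\})$ — are either intrinsic to $X_i$ or controlled by Lemma \ref{lem-sii'independent}, so that the value cannot depend on the choice of subsystem. The crux is choosing the right ``coordinates'' on slices; after that the comparison of two subsystems is a routine reduction to a common refinement.

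First I would record an order-theoretic fact. Suppose $S$ is a slice in $Y_i'$ and $\ga\subset Y_i$ is a (complete) monotone geodesic through a point $x\in Y_i$; such a $\ga$ exists because $Y_i$ is a union of complete monotone geodesics. Then $S$ meets $\ga$ in a single vertex $w(S,\ga)$, and $x\preceq S$ if and only if $x\preceq w(S,\ga)$. One direction is immediate, since $x\preceq w(S,\ga)\in S$ gives $x\preceq S$. Conversely, if $x\preceq w'$ for some $w'\in S$ while $w(S,\ga)\prec x$ — the only alternative, as $x$ and $w(S,\ga)$ are comparable along $\ga$ — then $w(S,\ga)\prec x\preceq w'$ would exhibit two comparable elements of the antichain $S$ (or $w(S,\ga)\prec x\preceq w(S,\ga)$, also impossible). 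Consequently, once monotone geodesics $\ga_1\ni x_1$ and $\ga_2\ni x_2$ in $Y_i$ are fixed (write $\ga_k^{(0)}$ for the set of vertices of $X_i'$ lying on $\ga_k$), the quantity $\chi_{(S_\preceq)}(x_k)$ depends on $S$ only through $S\cap\ga_k$, so the summand $d_{S_\preceq}(x_1,x_2)$ in Definition \ref{def-dsii'} depends only on the pair $(S\cap\ga_1,\,S\cap\ga_2)$.

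Fixing such $\ga_1,\ga_2\subset Y_i$, I would then partition $\slice_{i,Y}'$ according to the pair $(w_1,w_2)\in\ga_1^{(0)}\times\ga_2^{(0)}$ for which $\{w_1\}=S\cap\ga_1$ and $\{w_2\}=S\cap\ga_2$; each block $\{S\mid w_1\in S,\ w_2\in S\}$ has $\Si_{i,Y}'$-measure at most $m^{-(i+1)}$ by Lemma \ref{lem-normalized}. On such a block $d_{S_\preceq}(x_1,x_2)$ equals a constant $c(w_1,w_2)\in\{0,1\}$, namely $1$ precisely when $x_1\preceq w_1$ and $w_2\prec x_2$, or $w_1\prec x_1$ and $x_2\preceq w_2$. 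Summing over the partition (regrouping a non-negative series, and noting that $d_{\Si_{i,Y}'}(x_1,x_2)$ is finite by Lemma \ref{lem-distancechange} and induction from the case $i\le 0$) yields
\begin{equation}
\label{eqn-cutdecomp}
d_{\Si_{i,Y}'}(x_1,x_2)=\sum_{\substack{(w_1,w_2)\in\ga_1^{(0)}\times\ga_2^{(0)}\\ c(w_1,w_2)=1}}\Si_{i,Y}'\bigl(\{S\in\slice_{i,Y}'\mid \{w_1,w_2\}\subset S\}\bigr),
\end{equation}
in which both the index set and the coefficients $c(w_1,w_2)$ are determined by $x_1,x_2,\ga_1,\ga_2$ alone, and each subset $\{w_1,w_2\}$ is a finite subset of $V_i'$ lying in $Y_i$.

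Finally, to compare two finite subsystems $\{Y_j\}_{j\le i_Y}$ and $\{Z_j\}_{j\le i_Z}$ with $\{x_1,x_2\}\subset Y_i\cap Z_i$, I would pass to the common refinement $\{T_j\}_{j\le i}$ given by $T_i=Y_i\cup Z_i$ (a finite union of monotone geodesics) and $T_j=\pi_j^i(T_i)$ for $j<i$; one checks directly that $\{T_j\}$ is again a finite subsystem and that $Y_i,Z_i\subset T_i$. Choosing $\ga_1,\ga_2$ inside $Y_i\subset T_i$, formula (\ref{eqn-cutdecomp}) holds verbatim with $Y$ replaced by $T$ (the same $\ga_1,\ga_2$ are monotone geodesics in $T_i$, with the same vertex sets $\ga_k^{(0)}$, so the index set and coefficients are unchanged), while Lemma \ref{lem-sii'independent} applied to the pair $(\{Y_j\},\{T_j\})$ identifies each term: $\Si_{i,Y}'(\{S\mid \{w_1,w_2\}\subset S\})=\Si_{i,T}'(\{S\mid \{w_1,w_2\}\subset S\})$. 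Summing gives $d_{\Si_{i,Y}'}(x_1,x_2)=d_{\Si_{i,T}'}(x_1,x_2)$, and running the same argument with $Z$ in place of $Y$ (now using monotone geodesics through $x_1,x_2$ inside $Z_i\subset T_i$) gives $d_{\Si_{i,Z}'}(x_1,x_2)=d_{\Si_{i,T}'}(x_1,x_2)$. Hence the two cut distances agree, proving the lemma. I expect the main obstacle to be the second and third steps — isolating the correct intrinsic coordinates on slices (their intersections with fixed monotone geodesics) so that the cut metric becomes a sum of quantities to which Lemma \ref{lem-sii'independent} applies; the common-refinement reduction afterwards is bookkeeping.
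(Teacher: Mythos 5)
Your proposal is correct and follows essentially the same route as the paper: fix monotone geodesics $\ga_1\ni x_1$, $\ga_2\ni x_2$ in $Y_i$, observe that each separating slice is determined (for the purpose of the cut metric) by its single intersection point with each $\ga_k$, decompose the cut distance into the measures $\Si_{i,Y}'(\{S\mid \{w_1,w_2\}\subset S\})$ to which Lemma \ref{lem-sii'independent} applies, and compare two subsystems through a common enlargement. Your write-up simply makes explicit the order-theoretic fact and the decomposition formula that the paper's shorter argument leaves implicit.
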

\begin{proof}
Let $\ga_1,\ga_2\subset Y_i$ be monotone geodesics containing $x_1$ and $x_2$ respectively.
Then $d_{\Si_{i,Y}'}(x_1,x_2)$ is the total $\Si_{i,Y}'$-measure of the slices $S\in \slice_{i,Y}'$ such that
either $x_1\preceq S \prec x_2$ or $x_2\preceq S\prec x_1$.   But  every such slice $S$ contains
precisely one point from $\ga_1$, and one point from $\ga_2$.  
As the choice of $\ga_1$, $\ga_2$ was arbitrary,  Lemma \ref{lem-sii'independent} 
implies that cut metric $d_{\Si_{i,Y}'}(x_1,x_2)$ does not change when we pass from $\{Y_j\}_{j\leq i}$ to 
another subsystem which contains $\{Y_j\}_{j\leq i}$.    This implies the lemma, since the union of any two
finite subsystems containing $\{x_1,x_2\}$ is a finite subsystem  which assigns the same cut metric
to $(x_1,x_2)$. 
\end{proof}

We now define a sequence of pseudo-distances $\{\rho_i:X_\infty\times X_\infty\ra [0,\infty)$
by letting $\rho_i(x_1,x_2)=d_{\Si_{i,Y}}(\pi_i^\infty(x_1),\pi_i^\infty(x_2))$
where $\{Y_j\}_{j\leq i}$ is any finite subsystem containing $\{\pi_i^\infty(x_1),\pi_i^\infty(x_2)\}$. 
By Lemma \ref{lem-cutmetricindependentofy} the pseudo-distance is well-defined.  As in the finite
valence case:
\begin{itemize}
\item Lemma \ref{lem-distancechange} implies that $\{\rho_i\}$ converges uniformly to a
pseudo-distance $\rho_\infty:X_\infty\times X_\infty\ra [0,\infty)$.  
 \item $\frac{A}{2m^3}\bar d_\infty\leq\rho_\infty\leq \bar d_\infty$, since this may be verified for
 each pair of points $x_1,x_2\in X_\infty$ at a time, by using finite subsystems.
\item  If $V\subset X_\infty$
is a finite subset, then the restriction of $\rho_i$  to $V$ embeds isometrically in $L_1$ for all $i$, and
hence the same is true for $\rho_\infty$. 
\end{itemize}
By the main result of \cite{krivineetal},  if $Z$ is a metric space such that every finite subset isometrically embeds
in $L_1$, then $Z$ itself isometrically embeds in $L_1$.  Therefore
$(X_\infty,\rho_\infty)$ isometrically embeds in $L_1$.

\subsection{Removing Assumption \ref{ass-tempfinite}(2)}
\label{subsec-unionmonotone}
Now suppose $\{X_i\}$ is an admissible inverse system satisfying Assumption \ref{ass-tempfinite}(3),
i.e. it is a union of monotone geodesics.   We will reduce to the case treated
in Section \ref{subsec-(2)and(3)} by working with balls, and then take an ultralimit
as the radius tends to infinity.

\begin{lemma}
\label{lem-embedballinsystem}
Suppose $p\in X_\infty$, $R\in (0,\infty)$ and $R<m^{-(i+1)}$.   Then there is 
an admissible inverse system $\{Z_j\}$ satisfying (2) and (3) of Assumption
\ref{ass-tempfinite}, and an isometric embedding of the rescaled ball:
$$
\phi:(B(p,R),m^{(i-1)}\bar d_\infty)\ra Z_\infty
$$
which preserves the partial order, i.e. if $x,y\in B(p,R)$ and $x\preceq y$, then 
$\phi(x)\preceq \phi(y)$.
\end{lemma}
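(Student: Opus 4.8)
The plan is to replace the ``coarse'' part of $\{X_j\}$ by a standard line while leaving the ``fine'' part intact, exploiting the fact that a ball of radius $<m^{-(i+1)}$ only feels the levels $X_{i+1},X_{i+2},\dots$\,. First I would build the system $\{Z_j\}$ as follows: for $j\geq 2$ set $Z_j:=X_{j+i-1}$ with projection $\pi^Z_j:=\pi_{j+i-1}$; let $Z_1$ be $\R$ with its standard subdivision into edges of length $m^{-1}$ and let $\pi^Z_1:=\phi_{i+1}\colon X_{i+1}=Z_2\to\R=Z_1$ be the canonical direction-preserving map of Theorem \ref{thm-canonical_map_to_r}, where the subdivision of $\R=Z_1$ is chosen so that $\phi_{i+1}$ carries each edge of $Z_2$ isomorphically onto an edge of $Z_1'$; and for $j\leq 0$ let $Z_j$ be $\R$ with edges of length $m^{-j}$ and $\pi^Z_j$ the tautological inclusion $Z_{j+1}=\R\hookrightarrow\R'=Z_j'$. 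By Theorem \ref{thm-canonical_map_to_r}(1) each $\pi^Z_j$ is simplicial, a direction-preserving isomorphism on every edge, so condition (2) of Definition \ref{def-admissibleinversesystem} holds; condition (3) is immediate because $Z_k\simeq\R$ is connected for every $k\leq 1$. Thus $\{Z_j\}$ is an admissible inverse system, it visibly satisfies Assumption \ref{ass-tempfinite}(2), and it satisfies Assumption \ref{ass-tempfinite}(3) since $Z_j=X_{j+i-1}$ is a union of complete monotone geodesics for $j\geq 2$ (by hypothesis on $\{X_j\}$) and is a line for $j\leq 1$.

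Next I would identify $Z_\infty$ with $X_\infty$ as a set. A point of $Z_\infty$ is a compatible sequence $(z_j)$, which is determined by its tail $(z_j)_{j\geq 2}$, i.e.\ by a compatible sequence in $\{X_k\}_{k\geq i+1}$; conversely a point $(x_k)$ of $X_\infty$ is determined by its tail $(x_k)_{k\geq i+1}$, since $x_k=\pi_k^{i+1}(x_{i+1})$ for $k\leq i+1$. Hence ``forgetting the levels $<i+1$'' gives a bijection $\psi\colon X_\infty\to Z_\infty$ intertwining $\pi^{Z,\infty}_j$ with $\pi^{X,\infty}_{j+i-1}$ for $j\geq 2$. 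In particular, for $j\geq 2$ the closed star $\Star(v,Z_j)$ is literally $\Star(v,X_{j+i-1})$ with the same $\psi$-preimage, so the defining constraint for $\bar d^Z_\infty$ at level $j$ (Definition \ref{def-dbarinfty}) is exactly $m^{i-1}$ times the constraint for $\bar d_\infty$ at level $j+i-1$.

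Finally I would show $\psi$ restricts to an order-preserving isometry $(B(p,R),m^{i-1}\bar d_\infty)\to Z_\infty$, using the chain description of both metrics from Lemma \ref{lem-altdefdbarinfty}. For $x,x'\in B(p,R)$ one has $\bar d_\infty(x,x')<2R<2m^{-(i+1)}$, so any chain of stars realizing a value $<2m^{-(i+1)}$ must use only levels $\geq i+2$, since a single star at level $\leq i$ already contributes $2m^{-i}>2m^{-(i+1)}$. Transporting such a chain through $\psi$ yields a chain of $Z$-stars at levels $\geq 3$ whose total length is $m^{i-1}$ times the original, so $\bar d^Z_\infty(\psi x,\psi x')\leq m^{i-1}\bar d_\infty(x,x')<2m^{-2}$; conversely, any $Z$-chain of total length $<2m^{-2}$ uses only $Z$-levels $\geq 3$, hence arises from an $X$-chain at levels $\geq i+2$ of $m^{-(i-1)}$ times the length, giving the reverse inequality. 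Order-preservation then follows because each $\pi^Z_j$ — in particular $\pi^Z_1=\phi_{i+1}$ — is direction preserving and an isomorphism on every edge, hence sends directed paths to directed paths, so $\psi$ carries a directed path joining $x\preceq x'$ in $X_\infty$ to one joining $\psi x\preceq\psi x'$ in $Z_\infty$. The main point to watch is exactly this distance computation: one must check that grafting on the lines below level $2$ (in particular using the non-injective map $\phi_{i+1}$) cannot distort $\bar d^Z_\infty$ below scale $m^{-2}$, and this works precisely because the chains that matter at that scale are confined to the levels $\geq i+1$ on which $\{Z_j\}$ and $\{X_j\}$ agree.
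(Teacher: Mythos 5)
Your construction is correct, but it is genuinely different from the one in the paper. The paper first localizes: using Lemma \ref{lem-starscale} it finds a vertex $v\in V_i$ with $\pi_i(B(p,R))\subset\tstar(v,X_i)$, sets $Y_j=(\pi_i^j)^{-1}(\Star(v,X_i))$ for $j\geq i$, collapses $\Star(v,X_i)$ onto two edges of a line at level $i-1$, pads with lines below, and then must \emph{attach directed rays} at every level to restore Assumption \ref{ass-tempfinite}(3), since truncating to the preimage of a star destroys the union-of-complete-monotone-geodesics property; the isometry on the ball is then checked by showing that chains nearly realizing $\bar d_\infty$ project into $\Star(v,X_i)$. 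You instead keep every $X_k$, $k\geq i+1$, in its entirety and replace all levels $\leq i$ wholesale by lines via the canonical map $\phi_{i+1}$ of Theorem \ref{thm-canonical_map_to_r}; this makes Assumption \ref{ass-tempfinite}(3) automatic (no ray attachment), yields a global order-preserving bijection $X_\infty\to Z_\infty$, and reduces the isometry statement to a clean scale-separation observation: a chain of total value $<2m^{-(i+1)}$ has every term $2m^{-i_k}<2m^{-(i+1)}$, hence uses only levels $\geq i+2$, where the two systems coincide up to the index shift and rescaling. Your two small supporting facts both check out: $\bar d_\infty(x,x')<2R<2m^{-(i+1)}$ for $x,x'\in B(p,R)$ by the triangle inequality through $p$, and $\phi_{i+1}:X_{i+1}\to Y_{i+1}=Y_i'$ is simplicial, direction-preserving, and an isomorphism on edges by the proof of Theorem \ref{thm-canonical_map_to_r}(3), so $\pi_1^Z$ satisfies Definition \ref{def-admissibleinversesystem}(2). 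What each approach buys: yours is shorter and avoids the somewhat fiddly ray-grafting and the verification that realizing chains stay inside the star; the paper's produces a system built only from the part of $\{X_j\}$ sitting over one star, which is conceptually closer to the "zoom in on a ball" picture but costs the extra repair step. Either works for the application in Section \ref{subsec-unionmonotone}.
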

\begin{proof}
Since $R<m^{-(i+1)}$, by Lemma \ref{lem-starscale}
there is a  $v\in V_i$ such that $\pi_i(B(p,R))\subset \tStar(v,X_i)$.

We now construct an inverse system $\{Y_j\}$ as follows.  For $j \geq i$, we
let $Y_j$ be the inverse image of $\Star(v,X_i)$ under the projection
$\pi_i^j:X_j\ra X_i$.   We let $Y_j\simeq \R$ for $j< i$.   To define the projection
maps, we take $\pi_j^Y=\pi_i^X\restr_{Y_{j+1}}$ for $j\geq i$, and let $\pi_j^Y:Y_{j+1}\ra Y_j'$
be a simplicial isomorphism for $j<i-1$.  Finally,  we take
$\pi_{i-1}^Y:Y_i=\Star(v,X_j)\ra Y_{i-1}'\simeq \R$ to be
an order preserving simplicial map which is an isomorphism on edges, thus the star
$Y_i=\Star(v,X_i)$ is collapsed onto two consecutive edges $\ol{w_-w}$, 
$\ol{ww_+}$  in $Y_{i-1}'$, where $w=\pi_{i-1}(v)$, $w_-\prec w$, and
$w\prec w_+$.   Thus $\{Y_j\}$ is an admissible inverse system, but it
need not satisfy (2) or (3) of Assumption \ref{ass-tempfinite}.

Next, we enlarge $\{Y_j\}$ to a system $\{\hat Y_j\}$. 
We first attach, for every $k\geq i$, and every
vertex $z\in (\pi_{i-1}^k)^{-1}(w_-) $, a directed ray $\ga_z$
which is directed isomorphic
to $(-\infty,0]$ with the usual subdivision and order.   We then  extend the projection maps
so that if $\pi_k(z)=z'$ then $\ga_z\subset Y_{j+1}$ is mapped direction-preserving
isomorphically to a ray in
$X_j'$ starting at $z'$.  Then similarly, we attach directed rays to vertices 
$z\in (\pi_{i-1}^k)^{-1}(w_+)$, and extend the projection maps.  

Finally, we let $\{Z_j\}$ be the system obtained from $\{\hat Y_j\}$ by shifting 
indices by $(i-1)$, in other words  $Z_j=Y_{j-i-1}$.    

 Then $\{Z_j\}$ satisfies (2) and (3) of 
Assumption \ref{ass-tempfinite}.    For all $j\geq i$,
we have compatible  direction preserving simpliicial
embeddings $X_j\supset (\pi_i^j)^{-1}(\Star(v,X_i))\ra Z_{j-i+1}$.  
We will identify points with their image under this embedding.
If $x,x'\in B(p,R)$ and
$x=x_0,\ldots,x_k=x'$ is a chain of points as in
 Lemma \ref{lem-altdefdbarinfty} which nearly realizes $\bar d_\infty^X(x,x')$, then the chain
and the associated stars will project into $\Star(v,X_i)$; this implies that 
$\bar d_\infty^Z(x,x')\leq m^{(i-1)}\bar d_\infty^X(x,x')$.
Similar reasoning gives $m^{(i-1)}\bar d_\infty^X(x,x')
\leq \bar d_\infty^Z(x,x')$.

If $\ga\subset B(p,R)$
is a monotone geodesic segment, then $\pi_i(\ga)$ is a monotone geodesic segment in 
$\Star(v,X_i)$ with endpoints in $\Star(v,X_i)$, and so $\pi_i(\ga)\subset\Star(v,X_i)$.
Thus the embedding also preserves the partial order as claimed.

\end{proof}

\bigskip
Fix $p\in X_\infty$.  Then for every $n\in \N$, since $m^n<m^{(n+1)}$, 
Lemma \ref{lem-embedballinsystem} provides an inverse system
$\{Z_j^n\}_{j\in \Z}$ and an embedding
$$
\phi_n:(B(p,m^n),m^{ -n-3}\bar d_\infty)\ra Z_\infty^n\,.
$$
Let $f_n:Z_\infty^n\ra L_1$ be a $1$-Lipschitz embedding satisfying 
the conclusion of Theorem \ref{thm-bilipschitzembedding}, constructed in Section 
\ref{subsec-(2)and(3)}, and let $\psi_n:(B(p,m^n),\bar d_\infty)\ra L_1$ be the
composition $f_n\circ \phi_n$, rescaled by $m^{n+3}$.  Next we use a standard
argument with ultralimits, see \cite{benlin}.  Then the 
ultralimit  
$$
\ulim\psi_n:\ulim (B(p,m^n),\bar d_\infty)\ra\ulim L_1
$$
 yields 
the desired $1$-Lipschitz embedding, since $X_\infty$ embeds canonically
and isometrically in $\ulim (B(p,m^n),\bar d_\infty)$, and
an ultralimit of a sequence of $L_1$ spaces
is an $L_1$ space \cite{kakutani}.

\subsection{Removing Assumption \ref{ass-tempfinite}(3)}

Let $\{X_i\}$ be an admissible inverse system.

\begin{lemma}
\label{lem-hatx_i}
 $\{X_i\}$ may be enlarged  to an admissible inverse system $\{\hat X_i\}$ such that
for all $i\in\Z$,  $\hat X_i$ is a union of monotone geodesics.
\end{lemma}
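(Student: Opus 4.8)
The plan is to enlarge each $X_i$ by adjoining a family of ``formal'' monotone rays, one at every vertex that is extremal for the partial order $\preceq$ of Section \ref{subsec-directedpaths}, and then to define the enlarged bonding maps so that each such ray is carried onto a monotone ray already present, one level down, in the enlarged graphs.

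Concretely, for each $i$ I would call $v\in V_i$ \emph{up-terminal} if no edge of $X_i$ issues upward from $v$ (equivalently, $v$ is $\preceq$-maximal), and \emph{down-terminal} symmetrically, and then form $\hat X_i$ from $X_i$ by gluing, along its initial endpoint, a directed ray $R_v^+\cong[0,\infty)$ --- subdivided into edges of length $m^{-i}$ and directed away from $v$ --- to every up-terminal $v$, and likewise a ray $R_v^-$ directed toward $v$ to every down-terminal $v$. By construction every vertex of $\hat X_i$ then has both an up-neighbor and a down-neighbor, so by the criterion recorded in Assumption \ref{ass-tempfinite}(3) the graph $\hat X_i$ is a union of complete monotone geodesics; in particular, through every vertex of $\hat X_i$, and hence of its subdivision $\hat X_i'$, there run monotone rays in both directions. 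I emphasize that this is a property of the single graph $\hat X_i$, established \emph{before} any bonding map is chosen --- that is what keeps the next step from being circular.

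Next I would define $\hat\pi_i\colon\hat X_{i+1}\to\hat X_i'$: on $X_{i+1}$ it is the old map $\pi_i$ (whose image sits in $X_i'\subset\hat X_i'$); on a ray $R_w^+$ attached to an up-terminal $w\in V_{i+1}$ I set $v=\pi_i(w)\in V_i\subset\hat X_i'$, choose once and for all (depending only on $v$) a monotone ray $\sigma_v^+\subset\hat X_i'$ issuing upward from $v$, and let $\hat\pi_i$ carry $R_w^+$ onto $\sigma_v^+$ by the evident direction-preserving isomorphism (both are subdivided rays with edges of length $m^{-(i+1)}$); symmetrically for down-terminal $w$. The two prescriptions agree at each gluing vertex $w$ since both send it to $\pi_i(w)$, and $\hat\pi_i$ is then, edge by edge, a direction-preserving simplicial isomorphism, as Definition \ref{def-admissibleinversesystem}(2) demands (it need not be globally injective, but admissibility does not require that). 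Condition (1) of Definition \ref{def-admissibleinversesystem} is immediate, and for condition (3), given $x\in\hat X_i$ and $x'\in\hat X_j$ I would pick vertices $a\in V_i$, $b\in V_j$ lying in the same components of $\hat X_i$, $\hat X_j$ as $x$, $x'$ (taking $a=v$ when $x\in R_v^\pm$, similarly for $b$), apply condition (3) for $\{X_i\}$ to $a,b$ to obtain $k\le\min(i,j)$ with $\pi_k^i(a),\pi_k^j(b)$ in one component of $X_k\subset\hat X_k$, and conclude --- since $\hat\pi_k^i$ agrees with $\pi_k^i$ on $X_i$ and is continuous --- that $\hat\pi_k^i(x)$ and $\hat\pi_k^j(x')$ lie in one component of $\hat X_k$. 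Thus $\{\hat X_i\}$ is admissible, each $\hat X_i$ is a union of monotone geodesics, and the inclusions $X_i\hookrightarrow\hat X_i$ commute with the projections, so $\{\hat X_i\}$ enlarges $\{X_i\}$; I would also record that the chain description of $\bar d_\infty$ in Lemma \ref{lem-altdefdbarinfty} shows the induced map $X_\infty\to\hat X_\infty$ to be an isometric embedding, which is what is needed so that, combined with the case treated in Section \ref{subsec-unionmonotone}, one obtains Theorem \ref{thm-bilipschitzembedding} for the original system.

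The main obstacle --- and the reason the construction must attach the rays and define the maps together, rather than attaching rays level-by-level from the bottom --- is the choice of the target ray $\sigma_v^\pm$: for $w\in V_{i+1}$ up-terminal, the vertex $v=\pi_i(w)$ need not be up-terminal in $X_i$, and the upward edge-paths of $X_i$ leaving $v$ may all dead-end, so $X_i$ itself need contain no monotone ray through $v$. It is precisely the prior step --- completing $\hat X_i$ into a union of complete monotone geodesics, viewed purely as a graph --- that guarantees the ray $\sigma_v^\pm$ exists inside $\hat X_i'$; once that is in hand, the remainder of the verification is routine bookkeeping.
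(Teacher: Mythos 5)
Your construction is essentially identical to the paper's: attach directed rays at the order-extremal vertices, observe that every vertex of the enlarged graph then lies on directed rays in both directions (so $\hat X_i$ is a union of monotone geodesics), and extend the bonding maps by sending each attached ray direction-preservingly and isomorphically onto a monotone ray in the enlarged graph one level down. One small caveat on your closing aside: the paper only establishes that $X_\infty\to\hat X_\infty$ is bilipschitz (via Lemma \ref{lem-trimmedstarscale}, since a chain in $\hat X_\infty$ could in principle detour through the attached rays), not isometric, but this does not affect the lemma itself, for which the bilipschitz comparison suffices in the application.
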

\begin{proof}
We first enlarge  $X_i$ to $\hat X_i$ as follows.  For each $i\in \Z$, and each $v\in V_i$ which
does not have a neighbor $w\in V_i$ with $w\prec v$ (respectively $v\prec w$), 
we attach a directed ray $\ga_v^-$  (respectively $\ga_v^+$)
which is directed isomorphic
to $(-\infty,0]$ (respectively $[0,\infty)$) 
with the usual subdivision and order.     The resulting graphs $\hat X_i$ have the 
property that every vertex  $v\in\hat X_i'$ is the initial vertex of directed rays in both
directions.  Therefore we may extend the projection maps $\pi_i:X_{i+1}\ra X_i$
by mapping $\ga_v^\pm\subset X_{i+1}$ direction-preserving  isomorphically to a 
ray starting at $\pi_i(v)\in X_i'$.  The resulting inverse system is admissible.
\end{proof}

If $\bar d_\infty^X$ and $\bar d_\infty^{\hat X}$ are the respective metrics,
then for all $x,x'\in X_\infty\subset\hat X_\infty$, we clearly have 
$\bar d_\infty^{\hat X}(x,x')\leq \bar d_\infty^X(x,x')$.   Note that  if $x,x'\in X_\infty$ and
$\{\pi_j(x),\pi_j(x')\}$ belong to the trimmed star of a vertex $v\in \hat X_j$, then
in fact $v$ is a vertex of $X_j$ (since the trimmed star of a vertex in $\hat X_j\setminus X_j$
does not intersect $X_j$).   Thus by Lemma \ref{lem-trimmedstarscale}
we have $\bar d_\infty^{\hat X}(x,x')\geq \frac{2m^2}{(m-2)}\bar d_\infty^X(x,x')$.
Therefore if $f:\hat X_\infty\ra L_1$ is the embedding given by Section \ref{subsec-unionmonotone},
then the composition $X_\infty\hookrightarrow \hat X_\infty\stackrel{f}{\ra}L_1$ 
satisfies the requirements of Theorem \ref{thm-bilipschitzembedding}.

\section{The Laakso examples from \cite{laakso} and Example \ref{ex-laaksoahlforsregularpi}}
\label{sec-laaksoexamples}

In \cite{laakso} Laakso constructed Ahlfors $Q$-regular metric spaces satisfying a Poincare 
inequality for all $Q>1$.  
In the section we will show that the simplest example from \cite{laakso} is isometric to Example
\ref{ex-laaksoahlforsregularpi}.

\subsection{Laakso's description}
We will (more or less) follow Section 1 of \cite{laakso}, in the special case that (in Laakso's notation)
the Hausdorff dimension
$Q=1+\frac{\log 2}{\log 3}$, $t=\frac13$, and $K\subset [0,1]$ is the middle third Cantor set.

Define $\phi_0:K\ra K$, $\phi_1:K\ra K$ by 
$$
\phi_0(x)=\frac13 x\,,\quad \phi_1(x)=\frac23 +\frac13 x\,.
$$
Then $\phi_0$ and $\phi_1$ generate a semigroup of self-maps $K\ra K$.
Given a binary string $a=(a_1,\ldots,a_k)\in \{0,1\}^k$, we let $|a|=k$ denote its length. 
For every $a$, let $K_a\subset K$, be the image of $K$ under the corresponding word in the
the semigroup:
$$
K_a=\phi_{a_1}\circ\ldots\circ\phi_{a_k}(K)\,.
$$
Thus for every $k\in \N$ we have a decomposition of $K$ into a disjoint union 
$K=\sqcup_{|a|=k}\,K_a$.

For each $k\in \N$, let $S_k\subset [0,1]$ denote the set of $x\in [0,1]$ with a finite
ternary expansion $x=.m_1\ldots m_k$ where the last digit $m_k$ is nonzero.   In other
words, if $V_j$ is the set of vertices of the subdivision of $[0,1]$
into  intervals of length $3^{-j}$  for $j\geq 0$, then $S_k=V_k\setminus V_{k-1}$. 

For each $k\in\N$ we define an equivalence relation $\sim_k$ on $[0,1]\times K$ as follows.
For every $q\in S_k$, and every binary string $a=(a_1,\ldots,a_k)$, we identify
$\{q\}\times K_{(a_1,\ldots,a_k,0)}$ with $\{q\}\times K_{(a_1,\ldots,a_k,1)}$ by translation, 
or equivalently, for all $x\in K$, we identify
$
\phi_{a_1}\circ\ldots\circ \phi_{a_k}\circ\phi_0(x)$ and
$\phi_{a_1}\circ\ldots\circ \phi_{a_k}\circ\phi_1(x)$.    

Let $\sim$ be the union of the equivalence relations $\{\sim_k\}_{k\in \N}$;  this is an
equivalence relation.   We denote the collection of cosets  $([0,1]\times K)/\sim$ by $F$,  equip
it with the quotient topology, and let $\pi:[0,1]\times K\ra F$ be the canonical surjection.
The distance function on $F$ is defined by 
$$
d(x,x')=\inf\{\h^1(\ga)\mid \ga\subset [0,1]\times K,\;
\pi(\ga)\;\text{contains a path from $x$ to $x'$}\}\,,
$$
where $\h^1$ denotes $1$-dimensional Hausdorff measure.

\subsection{Comparing $F$ with Example \ref{ex-laaksoahlforsregularpi}}
For every $k\in \N$ we will construct $1$-Lipschitz maps $\iota_k:X_k\ra F$, $f_k:F\ra X_k$
such that $f_k\circ\iota_k=\id_{X_k}$, such that the image of $\iota_k$ is $\const\cdot 3^{-k}$-dense
in $F$.   This implies that $\iota_k$ is an isometric embedding for all $k$, and is
a $\const\cdot 3^{-k}$-Gromov-Hausdorff approximation.   Therefore $F$
is the Gromov-Hausdorff limit of the sequence $\{X_k\}$, and is isometric to $(X_\infty,d_\infty)$.

For every $k$, there is a quotient map $K\ra \{0,1\}^k$ which maps the subset
$K_{(a_1,\ldots,a_k)}\subset K$ to $(a_1,\ldots,a_k)$.  This induces quotient
maps $K\times [0,1]\ra \{0,1\}^k\times [0,1]$, and $f_k:F\ra X_k$,
where $X_k$ is the graph from Example
\ref{ex-laaksoahlforsregularpi}.    When $X_k$ is equipped with the 
path metric described in the example, the map $f_k$ is $1$-Lipschitz, because
any set $U\subset [0,1]\times K$ with diameter $<3^{-k}$
projects under the composition
$[0,1]\times K\ra F\stackrel{f_k}{\ra}X_k$ to a set $\bar U\subset X_k$
with $\diam(\bar U)\leq \diam(U)$.  

For every $k$, there is an injective map $\{0,1\}^k\ra K$ which sends
$(a_1,\ldots,a_k)$ to the smallest element of $K_a$, i.e.
$\phi_{a_1}\circ\ldots\circ\phi_{a_k}(0)$.   This induces
maps $[0,1]\times \{0,1\}^k\ra [0,1]\times K$ and $\iota_k:X_k\ra F$.
It follows from the definition of the metric on $F$ that $\iota_k$ is
$1$-Lipschitz, since geodesics in $X_k$ can be lifted piecewise isometrically
to segments in $[0,1]\times K$.  

We have $f_k\circ \iota_k=\id_{X_k}$.  Therefore $\iota_k$ is an isometric
embedding.  Given $x\in [0,1]\times K$, there exist $i\in \{0,\ldots,3^k\}$,
$a\in \{0,1\}^k$ such that 
$x\in W=[\frac{i-1}{3^{k}},\frac{i}{3^{k}}]\times K_a$.    Now $W/\sim$ is a subset of $F$ 
which intersects $\iota_k(X_k)$, and which has
  diameter $\leq 3^{-k}\diam(F)$ due to the self-similarity of the equivalence relation,
so $\iota_k$ is a $3^{-k}\diam(F)$-Gromov-Hausdorff approximation.

\section{Realizing metric spaces as inverse limits: further generalization}
\label{sec-realization_generalization}
In this section we consider the realization problem in greater generality.

Let $f:Z\ra Y$ be a $1$-Lipschitz map between metric spaces.
We  assume that for all $r\in (0,\infty)$,
if  $U\subset Y$ and $\diam(U)\leq r$, then the $r$-components
of  $f^{-1}(U)$ have diameter at most $Cr$.

\begin{remark}
Some variants of this assumption are essentially
equivalent.  
Suppose $C_1,C_2,\bar C_1\in (0,\infty)$.  If for all $r\in (0,\infty)$ and 
every subset $U\subset Y$ with $\diam(U)\leq r$, the $C_1r$-components
of $f^{-1}(U)$ have diameter $\leq C_2r$, it follows easily that the $\bar C_1r$-components
of $f^{-1}(U)$ have diameter $\leq C_2r\cdot\max(1,\frac{\bar C_1}{C_1})$.
\end{remark}

\bigskip

\subsection{Realization as an inverse limit of simplicial complexes}
Fix $m\in (1,\infty)$ and  $A\in (0,1)$.  For 
every $i\in \Z$, let $\U_i$ be an open cover of $Y$ such that for
all $i\in \Z$:
\begin{enumerate}
\setlength{\itemsep}{.5ex}
\item The cover $\U_{i+1}$ is a refinement of $\U_i$.
\item Every $U\in \U_i$ has diameter $\leq m^{-i}$. 
\item For every $y\in Y$, the  ball $B(y,Am^{-i})$
is contained in some $U\in\U_i$.
\end{enumerate}

Next, for all $i\in \Z$ we let $f^{-1}(\U_i)=\{f^{-1}(U)\mid U\in\U_i\}$,
and define $\hat \U_i$ to be the collection of pairs $(\hat U,U)$
where $U\in \U_i$ and $\hat U$ is an $m^{-i}$-component  of $f^{-1}(U)$.   

We obtain  inverse systems of simplicial complexes $\{L_i=\nerve(\U_i)\}_{i\in \Z}$, and 
 $\{K_i=\nerve(\hat\U_i)\}_{i\in\Z}$, where we view $\hat \U_i$ as an open cover of
$Z$ indexed by the elements of $\hat\U_i$.
There are canonical simplicial maps $K_i\ra L_i$ which send $(\hat U,U)\in \hat\U_i$
to $U\in \U_i$.

We may define a metric $d_{K_\infty}$
on the inverse limit $K_\infty$ by taking the supremal metric on $K_\infty$ such that
for all $i\in \Z$ and every vertex $v\in K_i$, the inverse image of the closed star $\Star(v,K_i)$
under the projection $K_\infty\ra K_i$ has diameter $\leq m^{-i}$.
Let $\bar K_\infty$ be the completion of $(K_\infty,d_{K_\infty})$.  

For every $z\in Z$ and  $i\in \Z$, there is a canonical (possibly infinite dimensional)
simplex $\si_i$ in $K_i$ corresponding to the collection of $U\in \hat \U_i$ which contain $z$.
The inverse images $(\pi_i^\infty)^{-1}(\si_i)\subset \bar K_\infty$ form a nested sequence of
subsets with diameter tending to zero, so they determine a unique point in the complete
space $\bar K_\infty$.   This defines a map $\phi:Z\ra \bar K_\infty$.

\begin{proposition}
$\phi$ is a bilipschitz homeomorphism.
\end{proposition}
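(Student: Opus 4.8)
The plan is to show that $\phi$ is a bilipschitz embedding whose image is dense in $\bar K_\infty$, and then to use completeness of $Z$ to upgrade this to a homeomorphism. Since $d_{K_\infty}$ is defined in exactly the same way as $\bar d_\infty$ in Definition~\ref{def-dbarinfty} (with $2m^{-i}$ replaced by $m^{-i}$), the elementary facts of Section~\ref{sec-notation_preliminaries} carry over verbatim: the projections extend to maps $\pi_j^\infty:\bar K_\infty\to K_j$, the metric $d_{K_\infty}$ has the chain characterization of Lemma~\ref{lem-altdefdbarinfty}, and --- the fact I will really use --- the star-scale estimate of Lemma~\ref{lem-starscale} holds, i.e. $d_{K_\infty}(\xi,\xi')<m^{-j}$ forces $\pi_j^\infty(\xi),\pi_j^\infty(\xi')$ to lie in a common closed star $\Star(v,K_j)$. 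The one genuinely new ingredient is the remark that for each $i$ and each $\hat U=(\hat U_0,U)\in\hat\U_i$, the set $\hat U_0$ is an $m^{-i}$-component of $f^{-1}(U)$ with $\diam(U)\le m^{-i}$, so the hypothesis on $f$, applied with $r=m^{-i}$, gives $\diam(\hat U_0)\le C m^{-i}$.

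For the upper Lipschitz bound I would argue as follows. Let $z,z'\in Z$ with $z\ne z'$, set $t=d_Z(z,z')$, and take the largest $i\in\Z$ with $Am^{-i}>t$, so $m^{-i}\le (m/A)\,t$. Since $f$ is $1$-Lipschitz, $d_Y(f(z),f(z'))\le t<Am^{-i}$, so by property~(3) of the covers $\{\U_i\}$ the ball $B(f(z),Am^{-i})$ --- which contains both $f(z)$ and $f(z')$ --- lies in some $U\in\U_i$; thus $z,z'\in f^{-1}(U)$, and as $d_Z(z,z')<m^{-i}$ they lie in a common $m^{-i}$-component $\hat U_0$. Then $v:=v_{\hat U}$, for $\hat U=(\hat U_0,U)$, is a vertex of both $\si_i(z)$ and $\si_i(z')$, so $\phi(z),\phi(z')\in\overline{(\pi_i^\infty)^{-1}(\Star(v,K_i))}$, a set of diameter $\le m^{-i}$. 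Hence $d_{\bar K_\infty}(\phi(z),\phi(z'))\le m^{-i}\le (m/A)\,d_Z(z,z')$.

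The lower bound is where I expect the real work to be. Suppose $z\ne z'$ and $d_{\bar K_\infty}(\phi(z),\phi(z'))<m^{-j}$. By the star-scale estimate, $\pi_j^\infty(\phi(z))$ and $\pi_j^\infty(\phi(z'))$ lie in a common closed star $\Star(v,K_j)$, $v=v_{\hat W}$ with $\hat W=(\hat W_0,W)\in\hat\U_j$. By construction $\pi_j^\infty(\phi(z))$ lies in the closed simplex $\si_j(z)$; lying also in $\Star(v,K_j)$, it sits in a closed simplex of $K_j$ having $v$ as a vertex, so by the nerve property there is a vertex $v_{\hat U}$ of $\si_j(z)$ (whence $z\in\hat U_0$) with $\hat U_0\cap\hat W_0\ne\emptyset$, where one takes $\hat U=\hat W$ if $v$ is itself a vertex of $\si_j(z)$. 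Symmetrically there is $\hat U'=(\hat U_0',U')\in\hat\U_j$ with $z'\in\hat U_0'$ and $\hat U_0'\cap\hat W_0\ne\emptyset$. Since each of $\hat U_0,\hat W_0,\hat U_0'$ has diameter $\le C m^{-j}$, the triangle inequality gives $d_Z(z,z')\le 3C m^{-j}$; taking $j$ with $m^{-(j+1)}\le d_{\bar K_\infty}(\phi(z),\phi(z'))<m^{-j}$ (and $j\to+\infty$ if this distance is $0$, which then forces $z=z'$) yields $d_{\bar K_\infty}(\phi(z),\phi(z'))\ge \frac{1}{3Cm}\,d_Z(z,z')$. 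The main obstacle is making the star-scale estimate for $d_{K_\infty}$ rigorous on the completion $\bar K_\infty$ --- exactly as in Section~\ref{sec-notation_preliminaries}, keeping in mind that the $\si_i(z)$ and the stars may be infinite-dimensional --- and the simplicial bookkeeping turning ``common closed star at level $j$'' into a short chain of overlapping $m^{-j}$-components joining $z$ to $z'$; this is the step that uses property~(2) of the covers together with the diameter bound on components.

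Finally I would verify density and surjectivity. Given $x\in K_\infty$ and $j\in\Z$, the point $\pi_j^\infty(x)$ lies in a closed simplex of $K_j$, hence has some vertex $v=v_{\hat U}$ with $\hat U=(\hat U_0,U)\in\hat\U_j$; since $\hat U_0\ne\emptyset$, choose $z_j\in\hat U_0$. Then $v\in\si_j(z_j)$, so $\phi(z_j)$ and $x$ both lie in $\overline{(\pi_j^\infty)^{-1}(\Star(v,K_j))}$, giving $d_{\bar K_\infty}(\phi(z_j),x)\le m^{-j}\to 0$. Thus $\phi(Z)$ is dense in $K_\infty$, hence in $\bar K_\infty$. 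Being bilipschitz, $\phi$ is a homeomorphism onto $\phi(Z)$; since $Z$ is complete, $\phi(Z)$ is complete, so closed in $\bar K_\infty$, whence $\phi(Z)=\bar K_\infty$ and $\phi$ is a bilipschitz homeomorphism. (If $Z$ is not assumed complete, the same reasoning produces a bilipschitz homeomorphism from the completion of $Z$ onto $\bar K_\infty$.)
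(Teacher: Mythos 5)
Your architecture is right, and where the paper says anything at all it agrees with you: the upper Lipschitz bound via property (3) of the covers and the diameter constraint defining $d_{K_\infty}$ is exactly the paper's (two-line) argument, your constants are correct, and the density-plus-completeness argument for surjectivity supplies a point the paper passes over in silence. The lower bound is where the paper writes only ``it follows from the definitions,'' and it is also where your proof has a genuine gap.

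The gap is the claim that the star-scale estimate, Lemma \ref{lem-starscale}(1), ``carries over verbatim'' to the system $\{K_i\}$. The proof of that lemma is not formal: it uses the path metrics $d_i$ on the graphs $X_i$, the fact that the bonding maps are $1$-Lipschitz for those metrics, and the fact that a path of $d_j$-length less than $m^{-j}$ stays inside a single closed star. The nerves $K_i$ carry no path metrics and no analogue of this last fact, so the argument does not transfer; moreover there is no reason for the conclusion to hold at level $j$ exactly --- a chain through finer levels can join $\phi(z)$ to $\phi(z')$ with total weight $<m^{-j}$ while displacing the underlying points of $Z$ by as much as $\const\cdot C\,m^{-j}$, and when $C/A$ is large such points need not lie in, or even adjacent to, a common element of $\hat\U_j$, so their level-$j$ projections can lie in disjoint closed stars. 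Since your entire lower bound is routed through this estimate and you explicitly defer its proof, the crux of the argument is missing. The repair is to bypass the star-scale lemma and argue directly from the chain characterization (the analogue of Lemma \ref{lem-altdefdbarinfty}, whose proof \emph{is} purely formal and does carry over): given a chain from $\phi(z)$ to $\phi(z')$ with links in stars $\Star(v_k,K_{i_k})$, $v_k=(\hat U_k,U_k)$, and $\sum_k m^{-i_k}\le d_{K_\infty}(\phi(z),\phi(z'))+\eps$, use the nerve property at each link to choose points of $Z$ lying in $\hat U_k$ and in the covering sets attached to the minimal simplices of the chain points; your observation that $\diam(\hat U_k)\le C m^{-i_k}$ then yields $d_Z(z,z')\le \const\cdot C\sum_k m^{-i_k}$ by the triangle inequality in $Z$. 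This is precisely the ``simplicial bookkeeping'' you allude to, but it is the whole content of the lower bound rather than a routine verification; once it is written out, the rest of your proof, including the surjectivity argument, goes through.
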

\begin{proof}
If $z,z'\in Z$ and $d(z,z')\leq Am^{-i}$, then $f(z),f(z')\in U$ for some $U\in \U_i$, and hence
$z,z'\in \hat U$ for some $m^{-i}$-component $\hat U\in \hat\U_i$ of $U$.  It follows that 
$d_{K_\infty}(\phi(z),\phi(z'))\leq m^{-i}$.

If $z,z'\in Z$ and $d_{K_\infty}(\phi(z),\phi(z'))\leq m^{-i}$, it follows from the definitions that
$d(z,z')\lesssim m^{-i}$.

\end{proof}

\bigskip

There is  another metric $\bar d_\infty$ on $Z$, namely the supremal metric
with the property that every element of $\hat \U_i$ has diameter at most $m^{-i}$.
Reasoning similar to the above shows that $\bar d_\infty$ is comparable to $d_Z$.

\bigskip
\subsection{Factoring $f$ into ``locally injective'' maps}

Let $\{\U_i\}_{i\in\Z}$ be a sequence of open covers as above.

For every $i\in \Z$, we may define a relation on $Z$ by declaring
that $z,z'\in Z$ are related if $f(z)=f(z')$ and $\{z,z'\}\subset \hat U$
for  some $(\hat U,U)\in \hat \U_i$.    We let $\sim_i$ be the equivalence relation this generates.
Note that $\sim_{i+1}$ is a finer equivalence relation than $\sim_i$.

For every $i\in \Z$, we have a pseudo-distance $d_i$ on $Z$ defined by 
letting $d_i$ be the supremal distance function $\leq d_Z$ such that $d_i(z,z')=0$
whenever $z\sim_i z'$.   Then $d_i\leq d_{i+1}\leq d_Z$, so we have a well-defined limiting
distance function $d_\infty:Z\times Z\ra [0,\infty)$.
We let $Z_i$ be the metric space obtained from $(Z,d_i)$ by collapsing zero diameter
subsets to points.   We get an inverse system $\{Z_i\}_{i\in\Z}$ with $1$-Lipschitz 
projection maps, and a compatible family of mappings $f_i:Z_i\ra Y$ induced by $f$. 

The map $f_i$ is ``injective at scale $\simeq m^{-i}$'' in the following sense.  If $z\in Z$,
and $\bar B\subset Z_i$ is the image of the ball $B(z,Am^{-i})$ under the canonical projection
map $Z\ra Z_i$, then the restriction of $f_i$ to $\bar B$ is injective.

\begin{proposition}
If $z,z'\in Z$ and $d_i(z,z')< m^{-i}$, then $d(z,z')\lesssim m^{-i}$.  Consequently
$d_\infty\simeq d_Z$.
\end{proposition}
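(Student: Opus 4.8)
The plan is to unwind $d_i$ as a ``teleportation'' pseudo-distance, track the $f$-values along the chains realizing it, and then apply the hypothesis on $f$ after dropping a bounded number of levels in the sequence of covers $\{\U_i\}$.

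First I would unwind $d_i(z,z')<m^{-i}$: since $d_i$ is the largest pseudo-distance $\le d_Z$ vanishing on the equivalence relation $\sim_i$, there is a finite sequence $z=a_1,b_1,a_2,b_2,\ldots,a_n,b_n=z'$ in $Z$ with $b_k\sim_i a_{k+1}$ for $1\le k<n$ and $\sum_{k=1}^n d_Z(a_k,b_k)<m^{-i}$ (consecutive $d_Z$-steps may be merged by the triangle inequality, consecutive $\sim_i$-steps by transitivity). The relation generating $\sim_i$ relates only points with the same $f$-value, so $\sim_i$ itself preserves $f$, whence $f(b_k)=f(a_{k+1})$; writing $y_0=f(z)$, $y_k=f(b_k)=f(a_{k+1})$ for $1\le k<n$ and $y_n=f(z')$, the $1$-Lipschitz bound on $f$ gives $\sum_{k=1}^n d_Y(y_{k-1},y_k)\le\sum_{k=1}^n d_Z(a_k,b_k)<m^{-i}$, so every $y_k$ lies in $B(f(z),m^{-i})$. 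Moreover each relation $b_k\sim_i a_{k+1}$ is witnessed by a finite chain $b_k=c_{k,0},\ldots,c_{k,\ell_k}=a_{k+1}$ in which $c_{k,j}$ and $c_{k,j+1}$ both lie in $\hat U_{k,j}$, an $m^{-i}$-component of $f^{-1}(U_{k,j})$ for some $U_{k,j}\in\U_i$, and every $c_{k,j}$ has $f$-value $y_k$.

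Next I would fix an integer $k_0\ge 1$ with $m^{k_0}\ge\max(C,A^{-1})$ and set $i'=i-k_0$, so that $m^{-i'}\ge C\,m^{-i}$ and $A\,m^{-i'}\ge m^{-i}$. By condition (3) on the covers, $B(f(z),A\,m^{-i'})\supset B(f(z),m^{-i})$ lies in some $U'\in\U_{i'}$; hence every $y_k\in U'$, so all of the $a_k$, $b_k$, $c_{k,j}$ lie in $f^{-1}(U')$. Splicing each chain $c_{k,0},\ldots,c_{k,\ell_k}$ between $b_k$ and $a_{k+1}$ produces a sequence from $z$ to $z'$ inside $f^{-1}(U')$ whose ``$d_Z$''-jumps satisfy $d_Z(a_k,b_k)<m^{-i}\le m^{-i'}$ and whose steps $c_{k,j}\to c_{k,j+1}$ satisfy $d_Z(c_{k,j},c_{k,j+1})\le\diam(\hat U_{k,j})\le C\,m^{-i}\le m^{-i'}$, the middle inequality being the hypothesis on $f$ applied with $r=m^{-i}$ to $U_{k,j}\in\U_i$ (which has $\diam(U_{k,j})\le m^{-i}$). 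Thus this is an $m^{-i'}$-chain in $f^{-1}(U')$, so $z$ and $z'$ lie in a common $m^{-i'}$-component $\hat U'$ of $f^{-1}(U')$, and a final application of the hypothesis on $f$ (with $r=m^{-i'}$ and $U'\in\U_{i'}$) gives $d_Z(z,z')\le\diam(\hat U')\le C\,m^{-i'}=C\,m^{k_0}\,m^{-i}$. This is the first assertion, with implied constant $C'=C\,m^{k_0}$ depending only on $C$, $m$, $A$. For the ``consequently'': $d_i\le d_Z$ for all $i$ gives $d_\infty=\sup_i d_i\le d_Z$, while for $z\ne z'$, setting $\delta=d_Z(z,z')$ and letting $i$ be the least integer with $m^{-i}<\delta/C'$ (so $m^{-i}\ge\delta/(C'm)$), the first assertion rules out $d_i(z,z')<m^{-i}$, whence $d_\infty(z,z')\ge d_i(z,z')\ge m^{-i}\ge\delta/(C'm)$; since also $d_\infty\le d_Z$, we obtain $\frac{1}{C'm}\,d_Z\le d_\infty\le d_Z$, that is, $d_\infty\simeq d_Z$.

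The one step that is not pure bookkeeping is the passage from scale $m^{-i}$ to the coarser scale $m^{-i'}$: a single $\sim_i$-step may displace a point by as much as $C\,m^{-i}$ and the covers $\U_i$ are only $A\,m^{-i}$-thick, so the chain need not be an $m^{-i}$-chain inside the relevant preimage; dropping the $k_0$ levels absorbs both defects at the cost of the bounded multiplicative factor $m^{k_0}$. Everything else is routine manipulation of $m^{-i'}$-chains together with two applications of the hypothesis on $f$.
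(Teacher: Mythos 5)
Your argument is correct and follows essentially the same route as the paper: unwind the chain realizing $d_i(z,z')<m^{-i}$, observe that all $f$-values along it stay within $\simeq m^{-i}$ of $f(z)$, assemble a single coarse chain in the preimage of a set of diameter $\simeq m^{-i}$, and apply the hypothesis on $f$ once more. Your device of dropping $k_0$ levels so that jumps of size $Cm^{-i}$ become admissible is a clean substitute for the paper's (tacit) expansion of each elementary $\sim_i$-step into its defining $m^{-i}$-chain, and your spelled-out deduction of $d_\infty\simeq d_Z$ supplies what the paper leaves implicit.
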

\begin{proof}
If $z_1,z_2\in Z$ and $z_1\sim_i z_2$, then $z_1,z_2$ belong to the same $m^{-i}$-component of
$f^{-1}(\ol{B(f(z_1),2m^{-i})}))$, and hence $d(z_1,z_2)\leq 2Cm^{-i}$.

If $z,z'\in Z$ and $d_i(z,z')< m^{-i}$, then there are points $z=z_0,\ldots,z_k=z'\in Z$
such if 
$$
J=\{j\in \{1,\ldots,k\}\mid z_{j-1}\not\sim_i z_j\}
$$
then 
$$
\sum_{j\in J} \;d(z_{j-1},z_j)<m^{-i}\,.
$$
Since $f$ is $1$-Lipschitz, it follows that $f(z_j)\in B(f(z),m^{-i})$ for all $j\in \{1,\ldots,k\}$.
Moreover, the $z_j$'s lie in the same $2m^{-i}$-component of $f^{-1}(B(f(z),m^{-i}))$, so
$d(z,z')\leq 2Cm^{-i}$.

\end{proof}

\bibliography{laaksoplus}
\bibliographystyle{alpha}
\end{document}